\documentclass[a4paper]{amsart}
\usepackage[utf8]{inputenc}
\usepackage{amsmath,amssymb,amsthm}
\usepackage[british]{babel}
\usepackage{eucal}
\usepackage{bm}
\usepackage{graphicx}
\usepackage[dvipsnames]{xcolor}
\usepackage{url}
\usepackage{tikz}
\usepackage{pgfplots}
\usetikzlibrary{pgfplots.groupplots}
\pgfplotsset{compat=1.3}
\pdfinclusionerrorlevel=1
\pdfminorversion=7

\newcommand{\LLL}{\lvert\!\lvert\!\lvert}
\newcommand{\RRR}{\rvert\!\rvert\!\rvert}
 \newtheorem{theorem}{Theorem}
 
 \newtheorem{lemma}{Lemma}
 \theoremstyle{definition}
  \newtheorem{definition}[lemma]{Definition}
\numberwithin{equation}{section}
\numberwithin{lemma}{section}

\begin{document}
\title[Adini FEM with hanging nodes]{The Adini finite element \\ on locally refined meshes}
\author{D. Gallistl}
\thanks{Supported by the European Research Council
 (StG \emph{DAFNE}, ID 891734).}
\address{Institut f\"ur Mathematik, Universit\"at Jena, 07743 Jena, Germany}
\email{dietmar.gallistl [at] uni-jena.de}
\date{}

\begin{abstract}
This work introduces a locally refined version of the Adini finite element
for the planar biharmonic equation
on rectangular partitions with at most one hanging node per edge.
If global continuity of the discrete functions is enforced,
for such method there is some freedom in assigning the normal
derivative degree of freedom at the hanging nodes.
It is proven that the convergence order $h^2$ known for regular
solutions and regular partitions is lost for any such choice,
and that assigning an average of the normal derivatives at the
neighbouring regular vertices is the only choice that achieves
a superlinear order, namely $h^{3/2}$ on uniformly refined meshes.
On adaptive meshes,
the method behaves like a first-order scheme.
Furthermore, the reliability and efficiency of an explicit
residual-based error estimator are shown up to the best approximation
of the Hessian by certain piecewise polynomial functions.
\end{abstract}

\keywords{nonconforming,
          hanging node,
          Kirchhoff plate, Serendipity}
\subjclass{
65N12,  
65N15,  
65N30
}

\maketitle

%
%
\section{Introduction and main results}
While Galerkin methods enjoy the error bound from C\'ea's lemma and, therefore,
local mesh refinement with nested spaces does not increase the approximation
error, in nonconforming discretizations ---a popular choice for the
biharmonic equation--- local refinement of the mesh resolution may
potentially disimprove the situation.
The main purpose of this work is an analysis of this phenomenon
in a model situation.
The Adini finite element method (FEM) is one of the earliest
methods for numerically solving the biharmonic equation
\cite{AdiniClough1961,Ciarlet1978}.
It is a standard four-noded rectangular element in the engineering
literature, and therein also referred to as
Adini--Clough--Melosh element \cite{Onate2009}.
Given a rectangular partition $\mathcal T$ of the underlying domain
$\Omega\subseteq\mathbb R^2$, the shape function space for
every rectangle $T$ is the space of cubic polynomials over $T$
enriched by the two monomials $x^3 y$ and $x y^3$, where
the Cartesian coordinates of a point in the plane are denoted by
$x,y$ and the mesh is assumed to be aligned with the Cartesian
axes. The corresponding twelve degrees of freedom are the point
evaluation of a function and the evaluation of its first-order partial
derivatives in any of the four vertices.
The resulting finite element, schematically shown in the mnemonic
diagram of Figure~\ref{f:adiniFE}, is easy to implement and
its a~priori error analysis is
theoretically well understood when regular partitions are used.
Regularity of a partition $\mathcal T$
means that if any vertex $z$ of an element $T\in\mathcal T$
belongs to some element $K\in\mathcal T$, it is automatically also
a vertex of $K$. For such regular meshes it is known that the
method converges at the order $h^2$ under uniform mesh refinement
if the solution is sufficiently regular, $h$ being the maximal
mesh size
\cite{Ciarlet1978,LascauxLesaint1975,HuYangZhang2016}.
In presence of singularities of the solution, the convergence order
is significantly reduced and adaptive mesh refinement towards the
singularity becomes mandatory,
a case not studied so far in the literature on the Adini FEM.
On rectangular partitions with bounded
aspect ratio, such refinement
necessarily requires elements with irregular vertices
(commonly called hanging nodes), i.e., a vertex $z$ of a rectangle
$T$ may belong to an edge of another rectangle $K$ without being a
vertex of it. The degrees of freedom attached to that hanging node
are then subject to some interpolation constraint.
The typical situation is displayed in Figure~\ref{f:adiniFE}.
In the case of the Adini element,
the value and the derivative in the direction tangential to the
edge are prescribed by the condition of the function to be globally
continuous. The continuity condition for the partial derivative in
the direction normal to the edge, however, is not canonically prescribed
because the Adini FEM is a nonconforming method, meaning that the discrete
functions are globally continuous but their gradients may be discontinuous
so that the discrete functions may possibly not belong to $H^2(\Omega)$,
the energy space for the biharmonic equation.
Two obvious possibilities (out of many others) are:
either the degree of freedom is set in such a way that it
interpolates the partial derivative on the neighbouring element;
or it is simply chosen as the 
average by linear interpolation
of the partial derivatives at the
neighbouring vertices determining the edge that contains the irregular vertex
$z$ in its interior.
It is obvious that the latter choice cannot retain the quadratic approximation
order $h^2$ known from the regular case because the averaging operation does not
conserve cubic polynomials.
However, in this work it is proven that it is the only possible choice
(in the class of linear, local,
and scaling-invariant couplings) that yields
a superlinear order, namely $h^{3/2}$ on uniform refinements of an initial
irregular mesh subjected to the condition of Definition~\ref{d:meshcondition}
below.

\begin{figure}
 \begin{center}
 \begin{tikzpicture}[scale=2]
  \draw (0,0)--(1.5,0)--(1.5,1)--(0,1)--cycle;
  \foreach \x/\y  in {0/0,1.5/0,1.5/1,0/1}
      {  \fill (\x,\y) circle (1pt);
         \draw (\x,\y) circle (1.7pt);
      }
\end{tikzpicture}
\qquad
 \begin{tikzpicture}[scale=2]
  \draw (0,0)--(1.5,0)--(1.5,1)--(0,1)--cycle;
  \draw (1.5,0)--(2.5,0);
  \draw (1.5,1)--(2.5,1);
  \draw (1.5,.5)--(2.5,.5);
  \fill (1.5,.5) circle (1pt);
  \draw[ultra thick,->] (1.5,.5)--(1.5,.7);
  \draw[thick,->] (1.5,.5)--(1.3,.5);
  \foreach \x/\y  in {0/0,1.5/0,1.5/1,0/1}
      {  \fill (\x,\y) circle (1pt);
         \draw (\x,\y) circle (1.7pt);
      }
\end{tikzpicture}
 \end{center}
 \caption{Mnemonic diagram of Adini's finite element (left);
          degrees of freedom at a hanging node (right).}
 \label{f:adiniFE}
\end{figure}
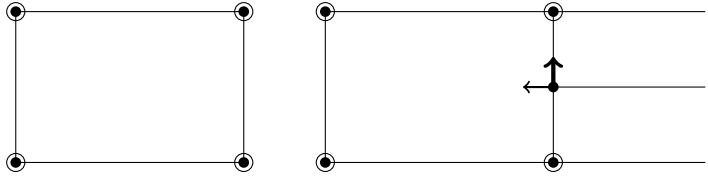

The design of the Adini element does not involve average integrals of normal
derivatives over edges as degrees of freedom, in contrast to
nonconforming methods like the Morley element and others
\cite{LascauxLesaint1975}. This prevents the element from passing
certain patch tests, and the error analysis is more involved and relies
on the choice of the shape function space, which is the same as for the
lowest-order Serendipity element \cite{BrennerScott2008}.
Consequently, a reliability proof for a residual-based error estimator has
not been available \cite{CarstensenGallistlHu2013,GallistlTian2024}.
Furthermore, the definition of the element on meshes with hanging
nodes is not straightforward because an analogue to
\cite[condition (A2)]{CarstensenHu2009} is not satisfied by
the normal derivative.
As the first main result in this work, it is shown that
the quadratic approximation order is necessarily lost in the presence
of hanging nodes, showing that best-approximation results in the
fashion of \cite{Gudi2010} are unavailable.
It is shown that a suitable assignment of local degrees of freedom
at hanging nodes can lead to $h^{3/2}$ convergence.

\begin{theorem}[a priori error estimate]\label{t:pri}
 Let $f\in L^2(\Omega)$ be such that the exact solution
 $u$ to the biharmonic problem
 \eqref{e:bih}
 satisfies
 $u\in H^4(\Omega)\cap W^{3,\infty}(\Omega)$. Let $(\mathcal T_h)_h$ be a sequence of uniform
 refinements of an initial partition
 that satisfies the mesh condition of Definition~\ref{d:meshcondition}
 and contains at least one
 irregular vertex.
 Let $u_h\in V_h$ denote the finite element
 solution to \eqref{e:bihd}
 where $V_h$ is the Adini finite element space with regular
 assignment in the sense of Definition~\ref{d:admissible}.
 The averaging assignment, that is the choice of $V_h$
 according to \eqref{e:Vh_av},
 satisfies
 $$
   \LLL u-u_h \RRR_h\lesssim h^2\|u\|_{H^4(\Omega)}
          +h    \|u\|_{H^3(\cup \mathcal T^{\mathrm{irr}})}
 ,
 $$
where $\cup \mathcal T^{\mathrm{irr}}$ from \eqref{e:Tirrdef}
is the area covered by elements
with irregular vertices.
 In particular, it satisfies the asymptotic bound
 $$
   \LLL u-u_h \RRR_h\lesssim h^{3/2} (\|u\|_{H^4(\Omega)}+\|u\|_{W^{3,\infty}(\Omega)})
 $$
 on uniformly refined meshes.
 For any other admissible assignment there exists a right-hand side $f$
 such that the solution $u\in C^\infty(\overline\Omega)$
 is smooth, but
 $$
 \LLL u-u_h \RRR_h \gtrsim h .
 $$
\end{theorem}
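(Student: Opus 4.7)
The plan is a Strang-type decomposition of $\LLL u-u_h\RRR_h$ into an approximation error $\inf_{v_h\in V_h}\LLL u-v_h\RRR_h$ and a consistency error $\sup_{w_h\in V_h}|a_h(u,w_h)-(f,w_h)|/\LLL w_h\RRR_h$, followed by separate estimates that single out the rectangles touching irregular vertices.

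For the upper bound with the averaging rule, I would construct a quasi-interpolant $I_h u\in V_h$ that agrees with the canonical nodal Adini interpolant at every regular vertex and takes the averaging value from \eqref{e:Vh_av} at every hanging node. On a regular rectangle $T$ the full twelve-dimensional Adini shape space is preserved, so the Bramble--Hilbert lemma gives $\|D^2(u-I_h u)\|_{L^2(T)}\lesssim h_T^2\|u\|_{H^4(T)}$; on an irregular rectangle the averaged coupling only reproduces linear polynomials, so the same argument yields $\|D^2(u-I_h u)\|_{L^2(T)}\lesssim h_T\|u\|_{H^3(T)}$. Summation over elements delivers the approximation contribution $h^2\|u\|_{H^4(\Omega)}+h\|u\|_{H^3(\cup\mathcal T^{\mathrm{irr}})}$. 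The consistency term I would treat by elementwise integration by parts: on interior edges between two regular rectangles the classical Adini $O(h^2)$ cancellation emerges from the continuity of $w_h$ and its tangential derivative at the edge endpoints, while edges carrying a hanging node are split into their long and two short sub-edges, where the averaging law turns the tangential-derivative defect into a linear interpolation error and a trace inequality supplies the extra term of order $h\|u\|_{H^3(\cup\mathcal T^{\mathrm{irr}})}$.

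The asymptotic $h^{3/2}$ rate follows from a volume count. Under uniform refinement of the initial partition the irregular edges of the coarse mesh split into $\mathcal O(1/h)$ sub-edges, each supporting a bounded number of rectangles of area $\mathcal O(h^2)$, hence $|\cup\mathcal T^{\mathrm{irr}}|\lesssim h$ and $\|u\|_{H^3(\cup\mathcal T^{\mathrm{irr}})}\leq |\cup\mathcal T^{\mathrm{irr}}|^{1/2}\|u\|_{W^{3,\infty}(\Omega)}\lesssim h^{1/2}\|u\|_{W^{3,\infty}(\Omega)}$; inserting this into the first inequality produces the stated rate.

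For the lower bound I would invoke that the averaging is the unique linear, local, scaling-invariant coupling reproducing the normal derivatives of cubic polynomials, so any other admissible rule misses some cubic $p$ by a fixed amount. Setting $u=\chi p$ with $\chi$ a smooth cut-off equal to one near a persistent hanging node and $f=\Delta^2 u\in C^\infty(\overline\Omega)$, a test function $w_h\in V_h$ supported in a small patch around this vertex produces a degree-of-freedom mismatch of order one which, after proper scaling, forces $a_h(u-u_h,w_h)\gtrsim h\LLL w_h\RRR_h$, whence $\LLL u-u_h\RRR_h\gtrsim h$ uniformly in $h$. The hardest step is the consistency estimate at hanging-node edges: the Adini element lacks average normal-derivative degrees of freedom, so the necessary cancellation must be extracted solely from the shape-function space together with the coupling law. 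The lower bound is comparably delicate, since one must show that the structural defect introduced by a non-averaging rule cannot be absorbed by any other $V_h$-element, which hinges on the uniqueness of averaging among admissible couplings.
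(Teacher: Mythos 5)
Your overall architecture (Strang splitting, an interpolant that is second order on regular and first order on irregular elements, and the measure count $\operatorname{meas}(\cup\mathcal T^{\mathrm{irr}})\lesssim h$ to upgrade $h\|u\|_{H^3(\cup\mathcal T^{\mathrm{irr}})}$ to $h^{3/2}\|u\|_{W^{3,\infty}}$) coincides with the paper's, but the two genuinely hard steps are not carried by the arguments you propose. For the consistency term, the mechanism you describe (endpoint continuity of $w_h$ and its tangential derivative on regular edges, a trace inequality on hanging edges) does not produce the claimed rates: the Adini element has no edge-mean normal-derivative degrees of freedom, so $\int_E[\partial_n v_h]\,ds\neq0$ in general and one cannot subtract a constant from $\partial^2_{nn}u$ on $E$; a plain trace inequality then gives only a per-edge bound of order $h_E\|D^2u\|_{L^\infty(\omega_E)}\|D^2_h v_h\|_{\omega_E}$, which after summing over the $O(h^{-2})$ regular (resp.\ $O(h^{-1})$ irregular) edges yields $O(1)$ (resp.\ $O(h^{1/2})$) instead of $O(h^2)$ (resp.\ $O(h^{3/2})$). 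The missing factor must come from the oscillation of $D^2u$, and the paper extracts it from the shape-function space itself: Lemma~\ref{l:Qdx} writes $(1-Q)\partial_x w$ on an edge explicitly through $\partial^3_{xyy}w$ and $\partial^4_{xyyy}w$, and Lemma~\ref{l:consist} converts the boundary integrals into volume integrals, exploiting the orthogonality of $\eta^3-\eta$ to constants and the vanishing of $w-Qw$ at regular vertices; this machinery is absent from your sketch. (A smaller slip: reproduction of \emph{linear} polynomials on irregular elements would give no power of $h$ for $\|D^2(u-I_hu)\|_T$ at all; the bound $h_T\|u\|_{H^3(T)}$ requires reproduction of \emph{quadratics}, which holds precisely because admissible assignments conserve normal derivatives of quadratics by Definition~\ref{d:admissible}(2).)

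The lower bound also rests on the wrong dichotomy. You argue that any non-averaging rule "misses some cubic by a fixed amount", but Lemma~\ref{l:admL} shows the averaging itself does not preserve cubics while still achieving $h^{3/2}$, so failure on cubics cannot be what forces the rate down to $h$. The paper's actual argument (Lemma~\ref{l:lowerbound}) is that any admissible assignment other than averaging must continue some global basis function $\varphi_{z,\alpha}$, $|\alpha|\leq1$, nontrivially onto the neighbouring elements $K_1\cup K_2$ of the hanging node (or couple the normal derivative to a point value); scaling invariance forces the continuation coefficient to be of size $h_E^{|\alpha|-1}$, and the explicitly computed nonzero integral $\int_{K_1\cup K_2}D^2_h\varphi = c\,h_E\operatorname{diag}(\gamma_1,\gamma_2)$, tested against a smooth $u$ with uniformly positive definite Hessian near that node, produces a consistency contribution of size $h_E$ that cannot cancel. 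Lemma~\ref{l:admL} then identifies the averaging as the unique admissible rule avoiding all such continuations. Your "degree-of-freedom mismatch of order one" is neither tested against the correct functions nor distinguishes the averaging from its competitors, so as stated it proves too much.
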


Furthermore, a residual-based a~posteriori error estimator is shown
to be reliable and efficient up to terms that are second-order accurate
on regular meshes,
but only first-order on more general meshes
(details on the notation follow in \S\ref{s:prelim}).

\begin{theorem}[a posteriori error estimate]\label{t:post}
Let $\mathcal T$ be a partition satisfying
the mesh condition of Definition~\ref{d:meshcondition}
and $V_h$ be
chosen according to the averaging assignment \eqref{e:Vh_av}.
The solution $u$ to the biharmonic problem \eqref{e:bih}
with right-hand side $f\in L^2(\Omega)$
and its Adini
finite element discretization $u_h$ from \eqref{e:bihd} satisfy,
with $\bm\eta$, $\bm\eta(T)$ defined in \eqref{e:estimator},
the reliability estimate
 $$
 \LLL u-u_h \RRR_h
 \lesssim \bm\eta
 $$
 and local efficiency
 $$
   \bm\eta(T)
   \lesssim
   \LLL u-u_h \RRR_{h,\omega_T}
   + \|(1-\Pi^{\mathcal T})D^2u\|_T
   + \|h^2 (1-\Pi_0)f\|_{\omega_T}
 $$
 for any $T\in\mathcal T$ with element patch $\omega_T$
 and the projection $\Pi^{\mathcal T}$
 from \eqref{e:PiTproj}.
\end{theorem}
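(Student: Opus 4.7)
The plan is to follow the Helmholtz–companion paradigm that is by now standard for nonconforming plate discretizations, carefully adapted to the peculiarities of the Adini element on meshes with hanging nodes.

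\emph{Reliability.} I would first construct a smoothing operator $J_h\colon V_h \to V_h\cap H^2_0(\Omega)$ that maps into a $C^1$ macro-element space (such as Hsieh--Clough--Tocher or Bogner--Fox--Schmit on an appropriate subrefinement of $\mathcal T$) by local averaging of nodal values and first derivatives. The averaging constraint \eqref{e:Vh_av} that defines $V_h$ at hanging nodes is compatible with $C^1$-conforming interpolation, so one can establish the smoothing estimate
$$
\LLL u_h - J_h u_h \RRR_h^2 \lesssim \sum_{E} h_E^{-1}\|[\partial_n u_h]\|_E^2
$$
(summed over interior edges, including the irregular ones), together with analogous $L^2$ and $H^1$ bounds by scaling. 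The triangle inequality
$\LLL u-u_h \RRR_h \le \LLL u - J_h u_h \RRR + \LLL J_h u_h - u_h \RRR_h$
reduces the reliability proof to estimating $u - J_h u_h \in H^2_0(\Omega)$. Testing the variational problem with this difference and inserting a quasi-interpolant $I_h(u-J_h u_h)\in V_h$ exploits the Galerkin orthogonality $a_h(u_h, I_h(\cdot)) = (f, I_h(\cdot))$. Elementwise integration by parts twice produces the edge jumps of $\partial_{nn}u_h$ and $\partial_{ns}u_h$ as well as the hanging-node residuals of $\bm\eta$, while the volume contribution collapses to an $f$-term only, since every Adini shape function $w$ satisfies $\Delta^2 w \equiv 0$ (trivially on $P_3$, and by direct computation on $x^3 y$ and $xy^3$). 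Standard interpolation estimates for $v - I_h v$ produce the expected powers of $h$.

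\emph{Efficiency.} Each elementary contribution of $\bm\eta(T)$ is bounded by Verfürth's bubble-function technique. Testing the elementwise residual against a cubic element bubble bounds the volume term $h_T^2\|\Pi_0 f\|_T$ by $\LLL u - u_h\RRR_{h,T}$ plus the data oscillation $\|h^2(1-\Pi_0)f\|_T$. For the edge-jump contributions such as $h_E^{1/2}\|[D^2 u_h\,n]\|_E$, I would insert the exact Hessian $D^2 u$ (which is single-valued across $E$ since $u\in H^4(\Omega)$) and test with an edge bubble supported in the union of the two adjacent rectangles. The decisive step is the application of an inverse estimate on the polynomial side, which forces the replacement of $D^2 u$ by $\Pi^{\mathcal T} D^2 u$; the mismatch contributes exactly the additional term $\|(1-\Pi^{\mathcal T})D^2 u\|_T$ to the efficiency bound. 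Hanging-node residuals are handled analogously with a bubble supported in the smaller neighbour rectangle.

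\emph{Main obstacle.} The core difficulty is not the bubble calculus but the simultaneous presence of two non-standard features of this discretization. First, the Adini element lacks edge-integral degrees of freedom, so the weak-continuity patch test that usually simplifies the analysis of $J_h$ is not at one's disposal; the smoothing estimate must therefore be verified by an element-by-element computation that respects the averaged hanging-node degrees of freedom in \eqref{e:Vh_av}. Second, the projection term $\|(1-\Pi^{\mathcal T})D^2 u\|_T$ appearing in the efficiency bound is intrinsic rather than a proof artefact: it is the a~posteriori shadow of the same loss of cubic-polynomial reproduction at irregular vertices that drives the $h^{3/2}$-limitation in Theorem~\ref{t:pri}.
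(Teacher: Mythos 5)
Your overall architecture (split into a conforming residual plus the distance of $u_h$ to $H^2_0(\Omega)$, enrichment/averaging into a $C^1$ space, elementwise integration by parts, bubble functions for efficiency) is the same skeleton the paper uses, and your efficiency discussion, including the observation that $\Delta^2w\equiv 0$ on $\mathcal A$ and the interpretation of the $\|(1-\Pi^{\mathcal T})D^2u\|_T$ term, is sound. The gap is in the reliability part, and it sits exactly where the paper says the difficulty lies. After integrating by parts you are left with $\sum_{T}\int_{\partial T}\partial^2_{nn}u_h\,\partial_n\hat\varphi$ with $\hat\varphi=\varphi-I_h\mathcal J\varphi$. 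For Morley-type elements this term is converted into edge jumps of $\partial^2_{nn}u_h$ against quantities with vanishing edge averages, and the claimed ``standard interpolation estimates'' then give the right powers of $h$. For the Adini element this fails: $\partial_n I_h\mathcal J\varphi$ is discontinuous across edges and its jump has no vanishing mean (no patch test), so the term cannot be absorbed into the $h_E^{1/2}\|[\partial^2_{nn}u_h]\|_E$ contributions of $\bm\eta$ by subtracting edge averages. You flag the missing patch test, but you locate the resulting difficulty in the smoothing estimate for $J_h$ (which is in fact routine; it is the paper's Lemma~\ref{l:bfsaveraging}) rather than in this consistency term, and you offer no mechanism to control it.

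The paper's resolution is to replace $\partial^2_{nn}u_h$ by the component $\bm g_{nn}$ of a $C^1$ (BFS-averaged) Hessian $\bm g=\mathcal J D^2_hu_h$, so that the boundary sum telescopes into $\sum_T\int_{\partial T}\bm g_{nn}(Q-1)\nabla I_h\mathcal J\varphi\cdot n_T$ plus a controllable perturbation $\|D^2_hu_h-\bm g\|$, and then to invoke the consistency Lemma~\ref{l:consist}, which rests on the explicit algebraic identity of Lemma~\ref{l:Qdx} for $(1-Q)\partial_xw$ on Adini shape functions. This is also the step that forces the extra estimator contribution $\|(1-\Pi^{\mathcal T})D^2u_h\|_T$ into the definition \eqref{e:estimator}; your proposal never explains how that term would arise in the reliability bound, which is a symptom of the missing argument. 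Without a substitute for Lemma~\ref{l:consist} (or some other structural input specific to the Adini shape functions), your reliability proof does not close.
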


While its efficiency part is not new and can be proven with standard
arguments \cite{Verfuerth2013},
more importantly
Theorem~\ref{t:post} also provides a reliability result of an a~posteriori
error estimator for the Adini element,
which partly proves a conjecture of
\cite{CarstensenGallistlHu2013} and explains the results of
their numerical experiments.
Therein, the error estimator $\bm\eta$ (up to the additional
local projection error $\|(1-\Pi^{\mathcal T})D^2u\|_T$
not considered there)
was experimentally observed to be an upper error bound
on uniformly refined meshes.
Theorem~\ref{t:post} theoretically justifies the observed convergence rates of
the error estimator in \cite{CarstensenGallistlHu2013}.

The results presented here allow for two conclusions.
The first one is that the Adini FEM can be used as a first-order
method for resolving corner singularities
and hints potential for resolving non-rectilinear (possibly curved) domains.
Since the Adini shape function space is that of the Serendipity family
\cite{BrennerScott2008}, the element cannot be mapped to general quadrilaterals
like trapeziums without loss of approximation quality, see the discussion
in \cite{LascauxLesaint1975}.
The local resolution variant of the method proposed here thus makes the Adini
FEM more competitive for such situations.
In some cases, it even satisfies superlinear convergence.
Secondly, and perhaps more fundamentally, the analysis shows that the quadratic
convergence order is necessarily lost under fairly reasonable coupling conditions
at hanging nodes. This highlights that nonconforming methods do not naturally
generalize to irregular partitions in absence of further structural conditions.
In particular, local refinement can significantly deteriorate the approximation
(as proven in Theorem~\ref{t:pri} and illustrated by numerical results in
\S\ref{ss:result_unif}),
and
best-approximation results analogous to those formulated in \cite{Gudi2010}
do not hold in this case.

This article is organized as follows:
\S\ref{s:prelim} defines the necessary data structures around
finite element meshes and introduces the Adini element.
The assignment at hanging nodes is discussed in
\S\ref{s:hanging}.
The proof of Theorem~\ref{t:pri} is provided in \S\ref{s:pri},
while  \S\ref{s:post} provides the proof of
Theorem~\ref{t:post}.
Comments on the extension to more general boundary conditions
follow in \S\ref{s:bc}.
Numerical experiments are shown in
\S\ref{s:num}.
Finally,
some important but technical estimates for discrete functions are provided in
the Appendices \S\ref{app:adinibasis}--\S\ref{app:qiAdini}.

\medskip
Throughout this work, standard notation on Lebesgue and Sobolev spaces
is used. The $L^2$ norm over a measurable set $\omega$ is denoted by
$\|\cdot\|_\omega$ with the convention $\|\cdot\|=\|\cdot\|_\Omega$.
Polynomial functions of total resp.\ partial degree not greater than
$k$ are denoted by $P_k$ resp.\ $Q_k$.
The notation $a\lesssim b$ or $b\gtrsim a$
indicates an inequality $a\leq Cb$ with a constant
independent of the mesh size; $a\approx b$ means $a\lesssim b\lesssim a$.

\section{Adini's finite element for the biharmonic equation}
\label{s:prelim}

Let $\Omega\subseteq \mathbb R^2$ be an open and bounded
rectilinear Lipschitz polygon.
Given a right-hand side $f\in L^2(\Omega)$,
the biharmonic problem with clamped boundary conditions seeks
$u\in H^2_0(\Omega)$ such that
\begin{equation}\label{e:bih}
  a(u,v) = (f,v)_{L^2(\Omega)} \quad\text{for all }v\in H^2_0(\Omega),
\end{equation}
where the bilinear form $a$ is defined by
$$
 a(v,w) := \int_\Omega D^2 v: D^2w \quad\text{for any }v,w\in H^2(\Omega)
$$
and the colon $:$ denotes the Frobenius inner product of matrices.

The following notation related to a partition $\mathcal T$
of $\Omega$ is used.
The set of vertices (extremal points) of a rectangle is denoted by
$\mathcal V(T)$.
The set of all vertices of $\mathcal T$ is denoted by $\mathcal V$.
A vertex $z\in \mathcal V$ for which $z\in T\in\mathcal T$ implies
$z\in\mathcal V(T)$, i.e., $z$
is one of the four vertices of $T$, is called a regular vertex,
and the set of such vertices is denoted by
$\mathcal V^{\mathrm{reg}}$.
The remaining irregular vertices are denoted by
$\mathcal V^{\mathrm{irr}}=\mathcal V\setminus\mathcal V^{\mathrm{reg}}$.
Throughout this work, the notions \emph{hanging node} and
\emph{irregular vertex} are used interchangeably.
Any irregular $z\in \mathcal V^{\mathrm{irr}}$ necessarily lies on the
interior of an edge $E$ of some rectangle $T$ that is the convex
hull of two vertices $z_1,z_2\in\mathcal V(T)$,
called the neighbouring vertices.
In particular $z\in E=\operatorname{conv}\{z_1,z_2\}$.
Throughout this paper, we work on classes of partitions with
uniformly bounded aspect ratio.
The $L^2$ projection to piecewise (possibly discontinuous) $P_k$ functions
is denoted by $\Pi_k$.
For $z\in\mathcal V$ and $T\in\mathcal T$ we define the usual patches
$$
\omega_z:=\operatorname{int} (\cup\{K\in\mathcal T: z\in K\})
\quad
\text{and}
\quad
\omega_T:=\cup \{\omega_z : z\in\mathcal V(T)\}.
$$
The outer unit normal of the boundary of
a rectangle $T$ is denoted by $n_T$.
The set of all edges is denoted by $\mathcal E$.
Every edge has a (globally fixed) normal vector $n_E$
and a tangential vector $t_E$.
If the meaning is clear from the context and there is no risk
of confusion, the symbols $n$ and $t$ are sometimes used without index
in expressions like $\partial^2_{nn}$, $\partial^2_{nt}$, etc.
The diameter of a rectangle $T$ and an edge $E$ are denoted by
$h_T$ and $h_E$, respectively. The piecewise constant mesh-size function
$h$ is defined by $h|_T:=h_T$ for any $T\in\mathcal T$.
If the letter $h$ is used in global expressions like $O(h^s)$
or outside norms, it denotes the maximum of the mesh size
function.

The piecewise Hessian with respect to $\mathcal T$ is denoted by
$D^2_h$, and the index $h$ is also used to indicate piecewise partial
derivatives $\partial_{j,h}$ of piecewise smooth functions.
Any rectangle $T\subseteq\mathbb R^2$
will be assumed to be aligned with the Cartesian axes, so that
any of its faces is parallel to either the $x$ or $y$ axis.
The shape function space $\mathcal A$
is that of cubic polynomials
enriched by the two elements $xy^3$ and $x^3y$, written
$$
 \mathcal A = P_3 + \langle xy^3,x^3y\rangle
 ,
$$
where angle brackets denote the linear hull.
If there is no risk of confusion,
a polynomial function will not be
distinguished from its restriction to or its extension
from some subdomain of $\mathbb R^2$
throughout this work.
Given a rectangle $T$, the twelve
degrees of freedom of the Adini finite element
are the point evaluations of a function and of
its first partial derivatives in those vertices.
A corresponding diagram is displayed in Figure~\ref{f:adiniFE}.
Given $\Omega$,
let $\mathcal T$ be a finite partition into rectangles such
that the elements of $T$ cover the domain
$\cup_{T\in\mathcal T} T = \overline \Omega$
and the intersection of the interior of any two distinct elements is empty.
The space of piecewise Adini functions reads
$$
\mathcal A(\mathcal T):=
\{ v\in L^\infty(\Omega) : v|_T\in\mathcal A \text{ for any }T\in\mathcal T\}.
$$
If $\mathcal T$ is any such partition (with or without hanging nodes),
the global finite element space with clamped boundary condition
and gradient continuity at the regular vertices reads
$$
\widehat V_h
:=
C(\overline\Omega)\cap\left\{v\in\mathcal A(\mathcal T) \left|
                \begin{aligned}
                                  \nabla v \text{ is continuous in the interior
                                  regular vertices of } \mathcal T
                                  \\
                               v \text{ and } \nabla v
                                  \text{ vanish on the boundary vertices of }
                                  \mathcal T
                \end{aligned}
        \right.
       \right\}.
$$
The continuity requirement shows that $\widehat V_h$
is spanned by $\mathcal A(\mathcal T)$ functions that
are continuous in all vertices, with continuous gradient
in all regular vertices and with continuous tangential derivative
at irregular vertices (`tangential' referring to the edge containing the
hanging node). No condition is made on the normal derivative at
such vertex although it is a local degree of freedom for the
finite element.
For regular partitions, $V_h=\widehat V_h$ is the standard
Adini finite element space known from the literature.
In this case it is known that $V_h\subseteq C(\overline\Omega)$ is a space of
continuous functions with possibly discontinuous piecewise derivatives.
This means that $V_h$ is a subspace of the Sobolev space $H^1_0(\Omega)$
but in general not a subspace of the energy space $H^2_0(\Omega)$
for the biharmonic problem, whence it is referred to as nonconforming.
If the partition contains irregular vertices, a subspace
$V_h\subseteq \widehat V_h$ needs to be considered such that the
discrete problem is well posed.
The Adini finite element discretization is based on the discrete
bilinear form
$$
 a_h(v,w) := \int_\Omega D^2_h v: D^2_hw
 \quad\text{for any }v,w\in H^2_0(\Omega)+\widehat V_h
 ,
$$
where $D^2_h$ denotes the piecewise Hessian with respect to $\mathcal T$.
Under the admissibility condition of Definition~\ref{d:admissible} below,
$V_h$ is such that $a_h$ is positive definite
over $V_h$.
The seminorm induced by $a_h$ and denoted by
$\LLL\cdot\RRR_h$ is a norm on $V_h$ under this
assumption.
The discretization seeks $u_h\in V_h$ such that
\begin{align}\label{e:bihd}
  a_h(u_h,v_h) = (f,v_h)_{L^2(\Omega)} \quad\text{for all }v_h\in V_h.
\end{align}
It is well known that, for regular partitions,
this is a convergent method on a sequence of
uniformly refined rectangles with maximal mesh size $h$.
The error bound shown in \cite{HuYangZhang2016}
states the quadratic order
$$
 \LLL u-u_h\RRR_h
 \lesssim
 h^2 \|u\|_{H^4(\Omega)}.
$$
In the general case of possibly nonconvex
domains, the assumed regularity is unrealistic,
and local mesh refinement is required for resolving
singularities or the domain geometry.
For rectangular and shape-regular partitions, this necessarily leads to
hanging nodes.
The main question is which continuity properties to
enforce at hanging nodes in the definition of $V_h$
in order to obtain a method with good convergence properties.
Here, we focus on 1-irregular partitions with a maximum of one hanging
node per edge.

\begin{definition}[mesh condition]\label{d:meshcondition}
We say that $\mathcal T$ satisfies the mesh condition if for any irregular
$z\in\mathcal V^{\mathrm{irr}}$ (1) its neighbouring vertices $z_1,z_2$ are regular
and
(2)
any pair $z_1,z_2 \in \mathcal V^{\mathrm{reg}}$ of regular
vertices forming an edge
hosts at most one irregular vertex, i.e.,
$\operatorname{card}(\operatorname{conv}\{z_1,z_2\}
              \cap \mathcal V^{\mathrm{irr}})
              \leq1
$.
\end{definition}
This condition means that every edge containing an irregular vertex
in its interior connects two regular vertices and does not contain
any further irregular vertex.
Figure~\ref{f:noC} shows some configurations excluded by this
condition,
while typical admissible configurations are displayed in
Figure~\ref{f:hanging} or Figure~\ref{f:meshes}.
Let $\mathcal T$ be a partition satisfying the condition
of Definition~\ref{d:meshcondition}.
Such partitions allow for simple $Q_1$ interpolation.
For any function $v$ over $\overline \Omega$ that is continuous in the
regular vertices $\mathcal V^{\mathrm{reg}}$, the interpolation
$Qv$ is the globally continuous
and piecewise bilinear function defined by assigning the nodal value
of $v$ at the regular vertices and, for irregular vertices,
the value of the affine interpolation between
the values at the two neighbouring vertices.
That is, $Q v$ is defined by
\begin{align}\label{e:Qdef}
  Q v (z) =
  \begin{cases}
   v(z)    &\text{if }z\in \mathcal V^{\mathrm{reg}}
    \\
   (\lambda_1+\lambda_2)^{-1}(\lambda_2v(z_1)+\lambda_1v(z_2))
   &\text{if }z\in \mathcal V^{\mathrm{irr}}
               \text{ has neighbours } z_1,z_2 
  \end{cases}
\end{align}
with the weights $\lambda_j:=|z-z_j|$, $j=1,2$.
The approximation properties of $Q$ are discussed in Lemma~\ref{l:Q1stab}
in \S\ref{app:Q1int} of the appendix.

\begin{figure}
 \begin{center}
 \begin{tikzpicture}[scale=1]
  \draw (0,0)--(1,0)--(1,1)--(0,1)--cycle;
  \draw (0,1)--(2,1)--(2,1.5)--(0,1.5)--cycle;
  \draw (1,0)--(2,0)--(2,1)--(1,1)--cycle;
  \draw (1,.5)--(2,.5);
  \draw (2,0)--(3,0)--(3,.5)--(2,.5)--cycle;
  \draw (2,.5)--(3,.5)--(3,1.5)--(2,1.5)--cycle;
 \end{tikzpicture}
 \quad
  \begin{tikzpicture}[scale=1]
  \draw (0,0)--(1,0)--(1,1)--(0,1)--cycle;
  \draw (0,1)--(2,1)--(2,1.5)--(0,1.5)--cycle;
  \draw (1,0)--(2,0)--(2,1)--(1,1)--cycle;
  \draw (0,.5)--(2,.5);
  \draw (1.5,0)--(1.5,1);
 \end{tikzpicture}
 \end{center}
 \caption{Mesh configurations excluded by Definition~\ref{d:meshcondition}.
          Left: some neighbouring vertices are irregular.
          Right: an edge contains more than one irregular vertex.
          }
 \label{f:noC}
\end{figure}

The Adini space $V_h$ over $\mathcal T$ is assumed to be a subspace of
$\widehat V_h$ from \S\ref{s:prelim}.
This fixes the point values in all vertices, the gradient values
in regular vertices, and, for any irregular vertex $z$,
the partial derivative in tangential direction of the edge $E$
containing $z$.
It does not fix the partial derivative at $z$ in the direction
normal to $E$. We will now discuss possible choices
in the next section.

\section{Continuity conditions at hanging nodes}\label{s:hanging}

For a 1-irregular partition $\mathcal T$,
an interior edge with a hanging node $z\in \mathcal V^{\mathrm{irr}}$ will
be shared by three rectangles: one rectangle $T$ for which
$z$ is not a vertex, $z\notin\mathcal V(T)$,
and two rectangles $K_1$, $K_2$ which have
$z$ as a vertex, see Figure~\ref{f:hanging}.
The local degrees of freedom related to $z$ cannot be a global
degree of freedom. Instead, a choice for the value of the function
and its gradient at $z$ has to be made.
For global continuity, it is necessary that
$v$ and the tangential derivative of $v$ are continuous at $z$.
The only freedom that is left is the choice of the derivative
normal to $T$ at $z$.
Any sensible choice must guarantee approximation and consistency.
We ask the assignment of the normal derivative to
be linear, local,
and scaling-invariant:

\begin{definition}[admissible assignment]\label{d:admissible}
Let $z\in\mathcal V^{\mathrm{irr}}$ be an irregular vertex.
There exist exactly three elements $T,K_1,K_2\in\mathcal T$
that contain $z$, where $z$ is a vertex of $K_1$, $K_2$
and belongs to the interior of an edge $E$ of $T$
(see Figure~\ref{f:hanging}) with normal vector $n_E$.
A function $v\in\widehat V_h$ is said to satisfy an admissible
assignment at $z$ if
$$
 \frac{\partial v|_{K_1}}{\partial n_E} (z)
 =
 \frac{\partial v|_{K_2}}{\partial n_E} (z)
 =
 L(v|_T)
$$
for a linear operator $L$ that
(1) is invariant under relabelling coordinates and under linear scaling (homothety),
i.e., if $\hat v$ is defined by $x\mapsto v(\lambda x)$
for a positive real $\lambda$,
then $L \hat v= \lambda Lv$,
and (2) conserves the normal derivative at $z$ for
all quadratic polynomials.
A subspace $V_h\subseteq\widehat V_h$ is said to satisfy
an admissible assignment if any $v_h\in V_h$ satisfies an
admissible assignment at every $z\in\mathcal V^{\mathrm{irr}}$
and if the kernel of $a_h$ over $V_h$ equals $\{0\}$.
\end{definition}

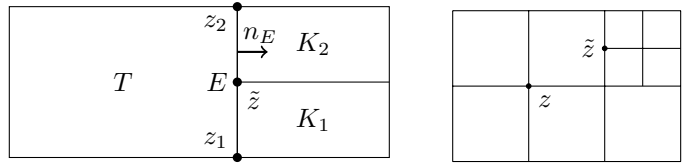
\begin{figure}
 \begin{center}
 \begin{tikzpicture}[scale=2]
  \draw (0,0)--(1.5,0)--(1.5,1)--(0,1)--cycle;
  \node at (.75,.5) []{$T$};
  \draw (1.5,0)--(2.5,0)--(2.5,1)--(1.5,1)--cycle;
  \node at (2,.75) []{$K_2$};
  \node at (2,.25) []{$K_1$};
  \draw (1.5,.5)--(2.5,.5);
    \foreach \y  in {0,.5,1}
      {  \fill (1.5,\y) circle (.2ex);
      }
  \node at (1.5,0) [above left]{$z_1$};
  \node at (1.5,1) [below left]{$z_2$};
  \node at (1.5,.5) [below right]{$\tilde z$};
  \node at (1.5,.5) [left]{$E$};
  \draw[thick,->] (1.5,.7)--(1.7,.7);
  \node at (1.65,.7) [above]{$n_E$};
\end{tikzpicture}
\qquad
 \begin{tikzpicture}[scale=1]
  \coordinate(P) at (1,1);
  \coordinate(H) at (2,1.5);
  \coordinate(N) at (2,1);
  \coordinate(C) at (2.5,1);
  \node at (P) [below right]{$z$};
  \node at (H) [left]{$\tilde z$};
  \node at (N) [below right]{$z'$};
  \node at (C) [above right]{$\check z$};
  \draw[step=1.0,black,thin] (0,0) grid (3,2);
  \draw (2,1.5)--(3,1.5)   (2.5,1)--(2.5,2);
  \draw[fill=black] (P) circle (.2ex);
  \draw[fill=black] (H) circle (.2ex);
  \draw[fill=black] (N) circle (.2ex);
  \draw[fill=black] (C) circle (.2ex);
 \end{tikzpicture}
 \end{center}
 \caption{Left: Configuration with a hanging node $\tilde z$.
  Right: Mesh configuration with a regular vertex $z$ and
         exactly one irregular vertex $\tilde z$
         on $\partial\omega_z$.}
 \label{f:hanging}
\end{figure}

Throughout this work,
we assume that $V_h$ is a linear subspace of $\widehat V_h$ satisfying
an admissible assignment;
in particular $a_h$ is a scalar product on $V_h$.
Then problem \eqref{e:bihd} has a unique solution $u_h\in V_h$,
and the classical a~priori error bound
\cite[Lemma~10.1.7]{BrennerScott2008} known as Berger--Scott--Strang lemma
states that
\begin{equation}\label{e:strang}
 \max\{A,B\} \leq \LLL u-u_h \RRR_h \leq A + B
\end{equation}
for the approximation and consistency errors
$$
  A:=\inf_{v_h\in V_h} \LLL u-v_h\RRR_h
  \quad\text{and}\quad
  B:=\sup_{v_h\in V_h\setminus\{0\}} a_h(u-u_h,v_h) \Big/ \LLL v_h\RRR_h  .
$$
For the method to converge at rate $h^s$ it is necessary that both $A$ and $B$
decrease at least at that rate.
A~priori error estimates are usually formulated on sequences
of uniformly refined meshes. Here, uniform refinement means that
every rectangle is split into four equal sub-rectangles by connecting
the midpoints of opposite edges with straight lines.
If this refinement process is started from an initial
1-irregular partition, eventually the partition will contain
regular vertices $z$ with exactly one irregular vertex on the boundary
of their vertex patch $\omega_z$, as displayed in Figure~\ref{f:hanging}.

We say a method is $O(h^s)$ if there exists a constant $C>0$
such that
$\LLL u-u_h\RRR_h \leq C h^s (\|u\|_{H^4(\Omega)}+\|u\|_{W^{3,\infty}(\Omega)})$
provided the norm of $u$ on the right-hand side is finite.
According to the assignment rule of Definition~\ref{d:admissible},
the space $V_h$ is spanned by global basis functions related to the
degrees of freedom at regular vertices.
The following lemma states that necessary for convergence better than
$O(h)$ is that certain basis functions related to
regular vertices
are continued by $0$ by the admissible assignment.
\begin{lemma}\label{l:lowerbound}
 Let $\mathcal T$ be a 1-irregular partition such that there
 exists a regular vertex $z\in \mathcal V^{\mathrm{reg}}$ with
 $\partial\omega_z\subseteq\Omega$ and
 exactly one irregular vertex $\tilde z\in\mathcal V^{\mathrm{irr}}$
 on the boundary of its vertex patch
 and without irregular vertices inside $\omega_z$ (see Figure~\ref{f:hanging}).
 Let $E\subseteq\partial\omega_z$ denote the edge containing $\tilde z$
 and let $z'$ denote the vertex forming an edge with $z$ and being
 neighbour to $\tilde z$.
 Let $\varphi=\varphi_{z,\alpha}$ 
 with $|\alpha|\leq 1$ denote the Adini basis functions
 with respect to function or derivative evaluation at $z$
 (defined in \S\ref{app:adinibasis} of the appendix) with respect to the multiindex $\alpha$.
 If $\partial_{n_E}\varphi_{z,\alpha}(\tilde z)$ follows an admissible assignment
 and $\varphi_{z,\alpha}$ is not continued by 0 outside $\omega_z$,
 then there exists an $f$ such that the solution
 $u$ belongs to 
 $H^4(\Omega)\cap W^{3,\infty}(\Omega)$,
 but $\LLL u-u_h\RRR_h\geq c_1 h_E - c_2 h_E^2$ 
 with positive numbers $c_1$, $c_2$ independent of the mesh size.
 The same holds true if the admissible assignment depends nontrivially
 on the function value at $z'$.
\end{lemma}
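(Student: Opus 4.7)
The plan is to apply the lower-bound direction of the Berger--Scott--Strang identity \eqref{e:strang}, namely $\LLL u-u_h\RRR_h\geq B$, to a carefully selected smooth solution $u$ and discrete test function $v_h\in V_h$. Integrating by parts twice on each element and using $\Delta^2 u=f$, the global continuity of $v_h$, and the edge-wise tangential-derivative continuity $[\partial_t v_h]_E=0$ produces the standard nonconforming consistency identity
$$
a_h(u-u_h,v_h)=\sum_{E'\in\mathcal E^{\mathrm{int}}}\int_{E'}\partial^2_{nn}u\cdot[\partial_n v_h]_{E'},
$$
and the strategy is to pick $v_h:=\varphi_{z,\alpha}$ together with a smooth $u$ that isolates the bad edge $E\subseteq\partial\omega_z$ containing $\tilde z$.

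The test function $v_h=\varphi_{z,\alpha}$ has $\LLL v_h\RRR_h\approx h_E^{-|\alpha|}$ and, by the ``not continued by~$0$'' hypothesis, extends beyond $\omega_z$ onto $K_1\cup K_2$. On the bad edge $E$, both endpoints $z',\check z$ are regular vertices, so $[\partial_n v_h]_E(z')=[\partial_n v_h]_E(\check z)=0$, whereas the admissible assignment gives
$$
[\partial_n v_h]_E(\tilde z)=\partial_n\varphi_{z,\alpha}|_T(\tilde z)-L(\varphi_{z,\alpha}|_T).
$$
Because $\varphi_{z,\alpha}|_T\in\mathcal A$ carries nontrivial contributions from the cubic enrichment $xy^3,x^3y$ while $L$ by Definition~\ref{d:admissible} reproduces normal derivatives only of quadratic polynomials, this difference is a scaling-invariant nonzero constant whenever $\varphi_{z,\alpha}$ is not continued by $0$; an explicit verification uses the Adini basis formulas from \S\ref{app:adinibasis}. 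The piecewise-cubic jump $[\partial_n v_h]_E$ thus vanishes at $z',\check z$ and attains a nonzero value at $\tilde z$, yielding $|\int_E[\partial_n v_h]_E|\gtrsim h_E$ by elementary integration.

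For the smooth datum, I would pick $u$ of separable form $u(x,y)=f(x)\eta(y)$ with $\eta$ a smooth boundary-vanishing cutoff and $f\in C^\infty$ engineered so that $f''$ is a prescribed nonzero constant on a neighbourhood of the vertical line carrying $E$ and vanishes on the few other vertical lines hosting vertical edges of $\operatorname{supp}(v_h)$. This makes all horizontal-edge contributions vanish (since $\partial^2_{yy}u\equiv 0$ on the relevant slab) and suppresses all vertical-edge contributions apart from those on the line of $E$. The only remaining one is a partner good edge sharing the same vertical line on the opposite side of $z$, whose jump vanishes at both endpoints (regular vertices); its integral is of order $h_E^2$ after a Bramble--Hilbert-type reduction using the smoothness of $\eta$ together with the Lascaux--Lesaint-type orthogonality of the Adini jumps on regular edges. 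Combining gives $|a_h(u-u_h,v_h)|\geq c_1 h_E-c_2 h_E^2$, and division by $\LLL v_h\RRR_h\approx h_E^{-|\alpha|}$ yields the claimed $\LLL u-u_h\RRR_h\geq c_1 h_E-c_2 h_E^2$ (the norms $\|u\|_{H^4}$ and $\|u\|_{W^{3,\infty}}$ being bounded independently of the mesh size). The final sentence of the lemma follows by repeating the argument with $\varphi_{z',0}$ in place of $\varphi_{z,\alpha}$: if $L$ depends nontrivially on $v(z')$, then $L(\varphi_{z',0}|_T)\neq 0$ forces $\varphi_{z',0}$ to fail the ``continued by~$0$'' property.

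The main obstacle is the quantitative nonvanishing of $[\partial_n\varphi_{z,\alpha}]_E(\tilde z)$ under the bare hypothesis that $\varphi_{z,\alpha}$ is not continued by $0$; this rests on the explicit Adini basis and on the scaling invariance of $L$ preventing it from reproducing the cubic enrichment of $\mathcal A$. A secondary difficulty is showing that the good-edge partner contribution is of genuine order $h_E^2$ rather than $h_E$, which requires the Lascaux--Lesaint-type cancellations among Adini jumps together with the smoothness of the localizing cutoff $\eta$.
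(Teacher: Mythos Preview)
Your overall framework (Strang lower bound with $v_h=\varphi_{z,\alpha}$ as test function, then analyse the consistency defect) matches the paper's, but the step on which the whole argument rests has a genuine gap. You claim that
$[\partial_n\varphi_{z,\alpha}]_E(\tilde z)=\partial_n\varphi_{z,\alpha}|_T(\tilde z)-L(\varphi_{z,\alpha}|_T)$
is nonzero because $L$ ``reproduces normal derivatives only of quadratic polynomials''. Definition~\ref{d:admissible}, however, requires $L$ to reproduce \emph{at least} quadratics; it does not forbid more. The hard assignment $L(v)=\partial_n v|_T(\tilde z)$ is admissible, makes the jump at $\tilde z$ vanish identically, and yet gives $L(\varphi_{z,\alpha}|_T)=\partial_n\varphi_{z,\alpha}|_T(\tilde z)\neq 0$ generically, so $\varphi_{z,\alpha}$ is \emph{not} continued by~$0$. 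Your argument therefore fails for precisely one of the assignments the lemma is supposed to exclude. Your scaling $\LLL\varphi\RRR_h\approx h_E^{-|\alpha|}$ is also off (the correct rate is $h_E^{|\alpha|-1}$), and your construction of $u$ asks $\partial^2_{xx}u$ to vanish on mesh-dependent vertical lines while $u$ should be mesh-independent.

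The paper sidesteps the jump issue by never isolating the edge $E$. It keeps the consistency defect as a \emph{volume} integral $\int_{\omega_z\cup K_1\cup K_2}D^2u:D^2_h\varphi-\int f\varphi$, uses the $P_1$-orthogonality of $D^2_h\varphi$ over $\omega_z$ (Lemma~\ref{l:d2orthp1}) to bound that part and the $f$-term by $O(h_E^2)$, and then exploits directly the hypothesis ``not continued by $0$'', which says exactly that $\varphi|_{K_1\cup K_2}=c\,\psi$ with $c\neq 0$ and $\psi=\varphi_{\tilde z,\beta}$ the normal-derivative basis function. The explicit computation $\int_{K_1\cup K_2}D^2_h\psi= h_E\operatorname{diag}(\gamma_1,\gamma_2)$ with $\gamma_1+\gamma_2\neq0$ (Lemma~\ref{l:nonzeroint}) then gives a contribution $\approx c\,h_E$ once $u$ is chosen so that $D^2u$ equals the identity near $K_1\cup K_2$ (a fixed cutoff of $(x^2+y^2)/2$). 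The scaling invariance of $L$ forces $|c|\approx h_E^{|\alpha|-1}$, which makes the final division by $\LLL\varphi\RRR_h\approx h_E^{|\alpha|-1}$ produce $c_1h_E-c_2h_E^2$ uniformly in $\alpha$. The $z'$ case is handled the same way via Lemma~\ref{l:dxxdyyorth_globalbf}.
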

\begin{proof}
 Let $u$ be the solution to \eqref{e:bih} and assume $u\in H^4(\Omega)$.
 Consider the consistency term $B$ from the
 a priori result \eqref{e:strang}.
 Due to \eqref{e:bihd} it satisfies
 $$
    B\geq
    \LLL \varphi\RRR_h^{-1}
     \left (a_h(u,\varphi)- \int_\Omega  f\varphi \right).
 $$
 We follow the notation of Figure~\ref{f:hanging} and denote by $K_1$, $K_2$
 the rectangles with $\tilde z\in \mathcal V(K_1)\cap \mathcal V(K_2)$.
 Clearly, due to the locality in Definition~\ref{d:admissible},
 $\varphi$ vanishes identically outside
 $\omega_z\cup K_1 \cup K_2$.
 We assume that $\varphi$ is not continued by 0
 outside $\omega_z$ and therefore we have with some 
 nonzero real number $c$ that
 $$
   \varphi|_{K_1\cup K_2} = c \psi
 $$
 where $\psi$ is the (local) basis function with
 $\psi=\varphi_{\tilde z,\beta}$ on $K_1\cup K_2$
 with $\beta\neq0$ parallel to $n_E$
 (see \S\ref{app:adinibasis} of the appendix for the notation around the Adini
 basis functions).
  From standard scaling we thus have
 $$
   \|D^2_h\varphi\|_{L^2(K_1\cup K_2)}
   \approx
   |c|
   \quad\text{and}\quad
   \|D^2_h\varphi\|_{L^2(\omega_z)}
   \approx h_E^{|\alpha|-1}.
 $$
 The scaling invariance of Definition~\ref{d:admissible}
 implies $|c|\approx h_E^{|\alpha|-1}$ so that
 $$
   \LLL \varphi \RRR_h \approx h_E^{|\alpha|-1}.
 $$
 Thus,
 $$
  B
   \gtrsim h_E^{1-|\alpha|}
             \left( \int_{\omega_z\cup K_1 \cup K_2}  D^2u : D_h^2 \varphi
             - \int_{\Omega} f\varphi\right)
  .
 $$
 From scaling of $\varphi$ we also have
 $$
  \left| \int_{\Omega}    f\varphi \right|
    \lesssim
    h_E^{1+|\alpha|} \|f\|_{L^2(\Omega)}.
 $$
 Further, it can be computed (see Lemma~\ref{l:d2orthp1}
 in \S\ref{app:adinibasis} of the appendix) that
 $$
 \int_{\omega_z} p\,\partial^2_{jk,h}\varphi = 0
 \quad\text{for any affine }p\in P_1 \text{ and any }j,k=1,2.
 $$
 Standard estimates thus show that
 $|\int_{\omega_z} D^2u:D_h^2\varphi|$ is bounded by a constant times
 $h_E^2\|u\|_{H^4(\Omega)} \LLL \varphi \RRR_h$.
 We thus obtain constants $C_1$, $C_2$ such that
  $$
   B \geq - C_1 
         h_E^2
    (\|u\|_{H^4(\Omega)} + \|f\|_{L^2(\Omega)} )
        +
         C_2  h_E^{1-|\alpha|}\int_{K_1\cup K_2}  D^2u : D^2_h \varphi .
  $$
  Now, by the above requirements, $\varphi|_{K_1\cup K_2}$
  must coincide with $c\psi$.
  We explicitly compute with Lemma~\ref{l:nonzeroint} that
  $$
    \int_{K_1\cup K_2} D^2_h\varphi
    =c\, h_E \begin{bmatrix} \gamma_1&0\\0&\gamma_2\end{bmatrix}
    \quad\text{with }\gamma_1\gamma_2=0 \text{ and }\gamma_1+\gamma_2\neq 0.
  $$
  Without loss of generality, assume that $c\gamma_1>0$.
  Then, if $D^2 u$ is uniformly positive definite
  in a neighbourhood of $K_1\cup K_2$,
  we get the asserted lower bound for $B$
  from the scaling of $|c|$.
  Such $u$
  can be easily obtained by multiplying the function
  $(x^2+y^2)/2$ with a smooth cutoff function
  which is constant $1$ in a neighbourhood of $K_1\cup K_2$,
  so that the Hessian of $u$ equals the unit matrix in that
  region.
  
  In the case that the assignment for the normal derivative at $\tilde z$
  depends nontrivially on the function value at $z'$,
  consider the global basis function $\varphi=\varphi_{z',(0,0)}$.
  Since, by Lemma~\ref{l:dxxdyyorth_globalbf}, the diagonal derivatives
  $\partial^2_{jj,h}\varphi$ 
  are $L^2$ orthogonal to affine functions,
  and since by the tangential continuity of $\varphi$
  the mixed derivative $\partial^2_{xy,h}\varphi$ can be integrated
  by parts against $\partial^2_{xy}u$ without interface jumps,
  we have as above that 
  $|\int_{\Omega} D^2u:D_h^2\varphi| \lesssim h_E^2\|u\|_{H^4(\Omega)} \LLL \varphi \RRR_h$.
  The lower bound then follows with an argument analogous to the one above.
\end{proof}

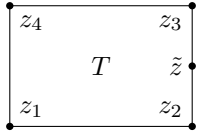
\begin{figure}
 \begin{center}
 \begin{tikzpicture}[scale=1.6]
  \draw (0,0)--(1,0)--(1,1)--(0,1)--cycle;
  \node at (.5,.5) []{$T$};
   \foreach \x/\y  in {0/0,1/0,1/1,0/1,1/.7}
      {  \fill (\x,\y) circle (.2ex);
      }
  \node at (0,0) [above right]{$z_1$};
  \node at (1,0) [above left]{$z_2$};
  \node at (1,1) [below left]{$z_3$};
  \node at (0,1) [below right]{$z_4$};
  \node at (1,.7) [below left]{$\tilde z$};
\end{tikzpicture}
 \end{center}
 \caption{Notation for the reference rectangle used in Lemma~\ref{l:admL}.}
 \label{f:Tref}
\end{figure}

The foregoing Lemma~\ref{l:lowerbound} has the following implication.
 For $|\alpha|\leq1$,
the lower bound in the proof is better than linear only if $c=0$.
If $\partial_{n_E}\varphi_{z,\alpha}(\tilde z)$ follows an admissible assignment
 and the method convergences like $O(h^s)$ with $s>1$
 on quasi-uniform meshes,
 then necessarily $\varphi_{z,\alpha}$
 is continued by 0 outside $\omega_z$.
 Likewise, the normal-derivative degree of freedom must not be coupled
 with the point evaluation at a neighbouring vertex.
 For the assignment operator $L$,
  using the notation for a reference element as displayed in Figure~\ref{f:Tref}
with hanging node $z$,
the lemma states that $L$ must map the basis functions
$\varphi_{z_1,\alpha}$ and $\varphi_{z_4,\alpha}$
 with $|\alpha|\leq 1$ to zero.
The next result shows
that the averaging is the only potentially superlinear
admissible refinement rule that preserves quadratic polynomials.
Recall the Adini basis functions from \S\ref{app:adinibasis}
of the appendix.

\begin{lemma}\label{l:admL}
Consider the reference square $(-1,1)^2$
from Figure~\ref{f:Tref}
 with vertices $z_1,\dots,z_4$ in counterclockwise enumeration
 starting with $z_1=(-1,-1)$.
 Further denote 
   $\tilde z=(r,0)$ with $-1<r<1$.
The only linear admissible (in the sense of Definition~\ref{d:admissible})
map $L:\mathcal A\to\mathbb R$
$L\varphi_{z_2,(0,0)}=0$ as well as
$$
         L\varphi_{z_j,\beta}=0 \text{ for }j=1,4 \text{ and } |\beta|\leq 1,        
         \quad\text{and}\quad
          \partial_x p (\tilde z) = Lp
          \quad\text{for all }p\in P_2
$$
is the averaging
$$
        Lp := \frac12 ( (1-r)\partial_x p(z_2)+ (1+r)\partial_x p(z_3) ).
$$
The space $P_3$ of cubic polynomials is not invariant under such assignment.
\end{lemma}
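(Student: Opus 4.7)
The plan is to parameterise $L$ by its twelve coefficients against the Adini nodal basis $\{\varphi_{z_j,\alpha} : j=1,\ldots,4,\ |\alpha|\leq 1\}$ of \S\ref{app:adinibasis}, then apply the seven vanishing hypotheses to reduce to five unknowns, and finally use $P_2$-preservation to determine these uniquely.

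First I would exploit that, since the Adini nodal interpolant is exact on $P_3\supset P_2$, every $p\in P_2$ admits the nodal expansion
\begin{equation*}
 p = \sum_{j=1}^4 p(z_j)\,\varphi_{z_j,(0,0)}
   + \sum_{j=1}^4 \partial_x p(z_j)\,\varphi_{z_j,(1,0)}
   + \sum_{j=1}^4 \partial_y p(z_j)\,\varphi_{z_j,(0,1)}.
\end{equation*}
Applying $L$ and invoking $L\varphi_{z_j,\alpha}=0$ for $j\in\{1,4\}$, $|\alpha|\leq 1$ together with $L\varphi_{z_2,(0,0)}=0$ expresses $Lp$ in terms of the five unknowns $a:=L\varphi_{z_3,(0,0)}$, $b_j:=L\varphi_{z_j,(1,0)}$ and $c_j:=L\varphi_{z_j,(0,1)}$ for $j\in\{2,3\}$.

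Next I would impose $Lp=\partial_x p(\tilde z)$ for each monomial in $\{1,x,y,x^2,xy,y^2\}$. The monomial $p=1$ forces $a=0$; the pair $p=y,\ p=y^2$ yields $c_2+c_3=0$ and $c_3-c_2=0$, hence $c_2=c_3=0$; the pair $p=x,\ p=xy$ yields $b_2+b_3=1$ and $b_3-b_2=r$ (using $\partial_x(xy)(\tilde z)=r$), determining $b_2=(1-r)/2$ and $b_3=(1+r)/2$. The monomial $p=x^2$ contributes only a redundant consistency check. Reassembling $Lp$ for an arbitrary $p\in\mathcal A$ through its Adini nodal expansion with these five values produces precisely the asserted averaging formula; its scaling and relabelling invariance in the sense of Definition~\ref{d:admissible} is then automatic because $(1\mp r)/2$ are affine interpolation weights along the edge. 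For the non-invariance of $P_3$, it suffices to exhibit a single cubic whose normal derivative at $\tilde z$ differs from its image under $L$: the monomial $p=xy^2\in P_3$ works, since the formula gives $Lp=\tfrac12((1-r)+(1+r))=1$ while $\partial_x p(\tilde z)=r^2\neq 1$ for $|r|<1$.

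The argument is a finite-dimensional linear algebra exercise with no essential obstacle; the main care required is to track which nodal coefficients the vanishing hypotheses kill, and to verify that five of the six monomials in $P_2$ impose linearly independent conditions on the five unknowns (the sixth, $x^2$, producing a consistent redundancy). The scaling and relabelling invariance required by Definition~\ref{d:admissible} need not be used in the derivation because they are automatically satisfied by the resulting averaging.
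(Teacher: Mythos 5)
Your proposal is correct and follows essentially the same route as the paper's proof: expand $p$ in the Adini nodal basis, use linearity and the vanishing hypotheses to reduce to five unknown coefficients, determine them by testing against quadratic polynomials (you use the monomials $1,x,y,x^2,xy,y^2$ where the paper uses the equivalent combinations $(x-1)$, $(x-1)y$, $1$, $y$, $(y^2-1)/2$), and exhibit a cubic counterexample ($xy^2$ in place of the paper's $(1-y^2)(1-x)$). Like the paper, you implicitly read the hanging node as lying on the edge $\operatorname{conv}\{z_2,z_3\}$ at height $r$, which is what the figure and the reproduction identities require.
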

\begin{proof}
 Since $P_3\subseteq\mathcal A$, any cubic polynomial can be represented
 in the Adini basis as
 $$
    p=\sum_{j=1}^4\sum_{|\alpha|\leq 1}
    \partial^\alpha p(z_j) \varphi_{z_j,\alpha} .
 $$
 From linearity of $L$ and the assumptions, we obtain
 $$
  L p =
         \sum_{j=2,3}\sum_{|\alpha|\leq 1}\partial^\alpha p(z_j) L\varphi_{z_j,\alpha} .
 $$
 Plugging in the polynomials $q=(x-1)y$ and $\tilde q=(x-1)$
 and equating with the $x$-derivative at $\tilde z$ then yields
 $$
  r= Lq = -  L\varphi_{z_2,(1,0)} +  L\varphi_{z_3,(1,0)}
  \quad\text{and}\quad 
  1=L q =   L\varphi_{z_2,(1,0)} +  L\varphi_{z_3,(1,0)}
 $$
 and therefore
 $L\varphi_{z_2,(1,0)}=(1-r)/2$ and $L\varphi_{z_3,(1,0)}=(1+r)/2$.
 Similarly, the polynomials $1$ (the constant) and $(y^2-1)/2$
 lead to
 $$
    0= L\varphi_{z_2,(0,0)}+L\varphi_{z_3,(0,0)}
    \quad\text{and}\quad
   0= -L\varphi_{z_2,(0,1)} + L\varphi_{z_3,(0,1)},
 $$
 which implies $L\varphi_{z_2,(0,0)}=-L\varphi_{z_3,(0,0)}$
 and $L\varphi_{z_2,(0,1)}=L\varphi_{z_3,(0,1)}$.
 Using the assumption $L\varphi_{z_2,(0,0)}=0$
 thus shows $L\varphi_{z_2,(0,0)}=0=L\varphi_{z_3,(0,0)}$.
 Plugging in the polynomial $y$ further yields
 $$
   0=L\varphi_{z_2,(0,1)} + L\varphi_{z_3,(0,1)},
 $$
 which results in
  $L\varphi_{z_2,(0,1)}=0=L\varphi_{z_3,(0,1)}$.
 
 Altogether, the only choice for an admissible map $L$ is the
 asserted averaging.
 It remains to check that this choice cannot preserve all cubic polynomials.
 For the choice $p=(1-y^2)(1-x)$,
 we see that $\partial_x p$ vanishes at all vertices.
 Hence, we have $Lp=0$
 but $\partial_x p (\tilde z) = -(1-r^2)\neq0$.
\end{proof}

The foregoing two Lemmas~\ref{l:lowerbound}--\ref{l:admL}
show that the averaging assignment is the only candidate that
potentially achieves superlinear convergence, which
in particular proves the
lower error bound stated in Theorem~\ref{t:pri}.
Hence, the choice proposed here is to assign the average of the
normal derivatives at the neighbouring vertices:
The global Adini finite element space is defined by
\begin{equation}\label{e:Vh_av}
V_h :=\left\{
         v\in \widehat V_h:
                     Q\nabla v \text{ is continuous at }
                      \mathcal V^{\mathrm{irr}}
      \right\}
\end{equation}
with the bilinear interpolation operator $Q$ defined
in \eqref{e:Qdef}.

\section{Analysis of the consistency error, Proof of Theorem~\ref{t:pri}}\label{s:pri}

Throughout this section, the choice of $V_h$ is fixed through the
averaging rule \eqref{e:Vh_av}.
On any $T$ we introduce local coordinates
$$
 \xi(x,y) = h_x^{-1}(x-m) \quad\text{and}\quad \eta(x,y) = h_y^{-1}(y-m)
$$
ranging from $-1$ to $1$,
where $m$ is the midpoint of $T$
and $h_x$, $h_y$ are the half widths in $x$, $y$ direction, respectively.
By $Q$ we denote the globally continuous and piecewise bilinear
interpolation from \eqref{e:Qdef}.
By the assignment of the hanging node value as in \eqref{e:Vh_av},
the expression $Q\partial_x w$ is well defined for any $w\in V_h$.
We note the following fact, which is essentially contained
in \cite{HuYangZhang2016}.

\begin{lemma}\label{l:Qdx}
 Let $T$ be a rectangle with $E$ an edge orthogonal to the
 $x$-axis. Then any $w\in\mathcal A$ satisfies
 $$
   (1-Q)\partial_x w|_E
   =
   -\frac{h_y^3}{3}\partial^4_{xyyy}w (\eta^3-\eta)
    + \frac{h_y^2}{2}\partial^3_{xyy}w (\eta^2-1).
 $$

\end{lemma}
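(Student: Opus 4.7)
The plan is a direct, elementary computation that exploits the algebraic structure of $\mathcal{A}=P_3+\langle xy^3,x^3y\rangle$.

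The starting observation is that, on a vertical edge $E$ (where $x$ takes a constant value $x_E$), the restriction $g(\eta):=\partial_x w(x_E,m+h_y\eta)$ is a polynomial in $\eta\in[-1,1]$ of degree at most $3$. Indeed, $\partial_x$ reduces the degree of any $p\in P_3$ to at most quadratic (in $y$), and among the two enrichment monomials only $xy^3$ contributes a true $\eta^3$ term to $\partial_x w|_E$. On the other hand, $Q\partial_x w$ is piecewise bilinear on $T$, so its restriction to $E$ is affine in $\eta$ and coincides with the linear interpolant of $g$ at the endpoints $\eta=\pm 1$.

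The key structural step is then to expand $g(\eta)=a_0+a_1\eta+a_2\eta^2+a_3\eta^3$ and observe that the affine part is invisible to $1-Q$, leaving
\begin{equation*}
(1-Q)g(\eta)=a_2(\eta^2-1)+a_3(\eta^3-\eta).
\end{equation*}
To match the form claimed in the lemma, I would then identify the coefficients via the chain rule: $g'''(\eta)=h_y^3\,\partial^4_{xyyy}w$ is constant on $E$ (only $xy^3$ produces a nonzero fourth derivative of this kind), hence $a_3=\tfrac{h_y^3}{6}\,\partial^4_{xyyy}w$, and similarly $\tfrac{h_y^2}{2}\,\partial^3_{xyy}w=a_2+3a_3\eta$, viewed as an affine function of $\eta$ along $E$. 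Multiplying the latter identity by $(\eta^2-1)$ yields $\tfrac{h_y^2}{2}\partial^3_{xyy}w\,(\eta^2-1)=a_2(\eta^2-1)+3a_3(\eta^3-\eta)$, while $-\tfrac{h_y^3}{3}\partial^4_{xyyy}w\,(\eta^3-\eta)=-2a_3(\eta^3-\eta)$; adding them reproduces $a_2(\eta^2-1)+a_3(\eta^3-\eta)$ exactly.

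The only subtle point — and essentially the sole potential pitfall — is interpreting the statement correctly: $\partial^3_{xyy}w$ in the formula must be read as the affine function of $\eta$ (equivalently, of $y$) on $E$, not as a single value at the midpoint. This is what forces the coefficient $-\tfrac{h_y^3}{3}$ in front of $(\eta^3-\eta)$ rather than the Taylor coefficient $\tfrac{h_y^3}{6}$ that one would naively write down. Once this convention is recognised, the proof is pure algebra and requires no approximation-theoretic or scaling input.
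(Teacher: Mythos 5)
Your proof is correct and follows essentially the same route as the paper's: both reduce the identity to the fact that $1-Q$ annihilates the affine part of the cubic edge restriction of $\partial_x w$ and then identify the coefficients of $\eta^2-1$ and $\eta^3-\eta$ via the chain rule, with the same care that $\partial^3_{xyy}w$ is affine (not constant) along $E$. The only cosmetic difference is that you work with a general cubic in $\eta$ on the edge, whereas the paper first filters the monomials of $\mathcal A$ down to the two relevant ones, $\xi\eta^2$ and $\xi\eta^3$.
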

\begin{proof}
 We express the monomials of the Adini space in terms of $\xi,\eta$.
 It is obvious that $\partial_x P_2$
 and $\partial_x \langle \xi^2\eta,\eta^3\rangle$
 belong to $Q_1 $.
 Further $\partial_x \langle \xi^3,\xi^3\eta\rangle $
 consists of functions that are linear in $\eta$
 and thus are interpolated exactly by $Q$ on $E$.
 Therefore, the only two remaining monomials are  $\xi\eta^2$, $\xi\eta^3$,
 and
 $$
   (1-Q)\partial_x w|_E = (1-Q)\partial_x (a\xi\eta^3 + b\xi\eta^2)|_E
 $$
 with real coefficients $a,b$.
 The chain rule reveals
 $\partial_x \xi=h_x^{-1}$, $\partial_y\eta=h_y^{-1}$.
 Taking derivatives
 of $w$ and comparing coefficients shows that
 $$
   \partial^4_{xyyy} w= \frac{6}{h_x h_y^3} a
   \quad\text{and}\quad
   \partial^3_{xyy}w = \frac{1}{h_x h_y^2} (6a\eta+2b)
  ,
 $$
 which leads to
 $$
  a=\frac{h_x h_y^3}{6} \partial^4_{xyyy} w
  \qquad\text{and}\qquad
  b=\frac{h_x h_y^2}{2}\partial^3_{xyy}w-3a\eta
  .
 $$
 A direct computation of derivatives and interpolation
 leads to
 $$
 (1-Q)\partial_x (a\xi\eta^3+b\xi\eta^2)|_E
 =
 h_x^{-1} \left(
          a (\eta^3-\eta)+b(\eta^2-1)
          \right) .
 $$
 Inserting the values for $a$ and $b$
 in this formula reveals the asserted identity.
\end{proof}

The previous lemma will be essential for bounding the
consistency term in the next lemma.
We denote
\begin{equation}\label{e:Tirrdef}
  \mathcal T^{\mathrm{reg}}
  := \{T\in\mathcal T: \text{all vertices of } T\text{ are regular}  \}
  \qquad\text{and}\quad
  \mathcal T^{\mathrm{irr}}:=\mathcal T\setminus \mathcal T^{\mathrm{reg}} .
\end{equation}

\begin{lemma}\label{l:consist}
 Let the partition $\mathcal T$ satisfy the condition of
 Definition~\ref{d:meshcondition}
 and let $V_h$ be chosen according to \eqref{e:Vh_av}.
 Let $g\in C^1(\overline{\Omega})$ be a piecewise polynomial function
 and $w\in V_h$.
 Then,
 $$
  \sum_{T\in\mathcal T} \int_{\partial T}g (1-Q)\nabla w\cdot n_T
  \lesssim
   (
   \|(1-\Pi_1) g \|_{\cup\mathcal T^{\mathrm{reg}}}
   +
   \|(1-\Pi_0) g \|_{\cup\mathcal T^{\mathrm{irr}}}
   )
   \|D^2_h w \|
  .
 $$
 The constant hidden in the notation $\lesssim$ may depend on
 the polynomial degree of $g$.
\end{lemma}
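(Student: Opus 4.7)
The plan is to split the sum element-by-element and exploit the edge structure of $(1-Q)\partial_n w$ supplied by Lemma~\ref{l:Qdx}. The core ingredient is that on every edge $E$ of a rectangle $T$, $(1-Q)\partial_n w|_E$ lies in the two-dimensional span of $(\tau^2-1)$ and $(\tau^3-\tau)$ in a scaled tangential coordinate $\tau\in[-1,1]$, with coefficients controlled by $\|D^2 w\|_T$ through standard inverse estimates on the Adini space. This yields the edge-norm bound $\|(1-Q)\partial_n w\|_{L^2(E)}\lesssim h_E^{1/2}\|D^2 w\|_T$ that will drive the Cauchy--Schwarz arguments below.

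Setting $\Pi_Tg:=\Pi_1g|_T$ when $T\in\mathcal T^{\mathrm{reg}}$ and $\Pi_Tg:=\Pi_0g|_T$ when $T\in\mathcal T^{\mathrm{irr}}$, I would decompose
$$
\int_{\partial T}g\,(1-Q)\nabla w\cdot n_T
=\int_{\partial T}(g-\Pi_Tg)(1-Q)\nabla w\cdot n_T
+\int_{\partial T}\Pi_Tg\,(1-Q)\nabla w\cdot n_T.
$$
The first (residual) term is bounded edge-by-edge via Cauchy--Schwarz, using the edge-norm bound above together with the trace inequality $\|g-\Pi_Tg\|_{L^2(\partial T)}\lesssim h_T^{-1/2}\|g-\Pi_Tg\|_T$; summation over $T$ by a further Cauchy--Schwarz then produces exactly the claimed bound $(\|(1-\Pi_1)g\|_{\cup\mathcal T^{\mathrm{reg}}}+\|(1-\Pi_0)g\|_{\cup\mathcal T^{\mathrm{irr}}})\|D^2_h w\|$.

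The main work is to show that the projection part $\sum_T\int_{\partial T}\Pi_Tg\,(1-Q)\nabla w\cdot n_T$ is dominated by the same right-hand side. Two mechanisms drive the argument. First, opposite edges of a regular rectangle $T$ carry identical coefficients in the Lemma~\ref{l:Qdx} representation, so that the contribution of a constant cancels identically and that of the linear monomials $x,y$ combines to a term of order $h_T^2|\nabla(\Pi_1g)|\,\|D^2 w\|_T$, which is absorbed into $\|(1-\Pi_1)g\|_T\|D^2 w\|_T$ for piecewise polynomial $g$ via Poincar\'e and inverse estimates. Second, since $Q\nabla w$ is globally continuous (by \eqref{e:Vh_av}) and vanishes on $\partial\Omega$, writing $\Pi_Tg=g-(g-\Pi_Tg)$ and invoking the continuity of $g\in C^1(\overline\Omega)$ makes the $gQ\nabla w$-part telescope to zero across interior edges; the remainder reduces to edge-sums of normal-derivative jumps of $\nabla w$, which on regular interior edges are treated by integration by parts in the tangential variable (using that $(\tau^3-\tau)$ has primitive $\tfrac14(\tau^2-1)^2$ vanishing at both endpoints) and absorbed into the tangential derivative of $g$.

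The principal obstacle will be the hanging-node edges: there the Hermite-cubic tangential derivative of $w$ on the large element does not match the averaged value imposed on the two small sibling elements $K_1,K_2$ by \eqref{e:Vh_av}, generating a tangential mismatch of $\nabla w$ at the hanging corner. Handling this requires a sibling-pair cancellation, in the spirit of the opposite-sign identity used in the proof of Lemma~\ref{l:lowerbound}, and this is precisely why only the weaker $\Pi_0$-projection (rather than $\Pi_1$) survives on the irregular part, in accordance with the loss of quadratic approximation documented in Theorem~\ref{t:pri}.
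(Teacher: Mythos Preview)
Your residual-part argument is sound, but the treatment of the projection part has a genuine gap. The claim that the linear contribution ``combines to a term of order $h_T^2|\nabla(\Pi_1 g)|\,\|D^2w\|_T$, which is absorbed into $\|(1-\Pi_1)g\|_T\|D^2w\|_T$'' is false: take $g$ globally affine, so $(1-\Pi_1)g\equiv0$ on every element, yet $\nabla(\Pi_1 g)\neq0$, and the per-element contribution does not vanish (from Lemma~\ref{l:Qdx} and the fundamental theorem of calculus one gets $\tfrac{h_y^2}{2}\int_T\partial_x g\,\partial^3_{xyy}w\,(\eta^2-1)$, which is generically nonzero). Your fallback edge-jump mechanism has the same defect: one tangential integration by parts against the primitive $\tfrac14(\tau^2-1)^2$ delivers only $\partial_t g$, still a \emph{first} derivative, and the $(\tau^2-1)$ component of the jump is not even touched. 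Neither route reaches $\|(1-\Pi_1)g\|$.

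The paper's argument is structurally different and does not split $g$. Since the coefficients in Lemma~\ref{l:Qdx} are independent of the normal coordinate, the fundamental theorem of calculus turns $\int_{\partial T}g(1-Q)\partial_x w\,n_x$ into a volume integral in $\partial_x g$; the $(\eta^3-\eta)$ part then yields $\|(1-\Pi_0)\partial_x g\|_T$ by mean-zero orthogonality. For the $(\eta^2-1)$ part, the decisive step is a chain of integrations by parts (in $y$, then in $x$, then tangentially along the vertical edges) that transfers \emph{second} derivatives $\partial^2_{xy}g,\partial^2_{xx}g$ onto $g$, at the cost of a remainder $R(T)=\int_{\partial T}\partial_x g\,e\,n_x-\sum_{z\in\mathcal V(T)}h_y n_x(\partial_x g\,e)(z)$ with $e:=w-Qw$. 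The global cancellation lives here, not in $[\nabla w]_E$: since $\partial_x g$ and $e$ are globally continuous and $e|_{\partial\Omega}=0$, $\sum_T\int_{\partial T}\partial_x g\,e\,n_x=0$; and since $e(z)=0$ at every regular vertex, $\sum_T R(T)$ reduces to contributions at irregular vertices only, bounded by $\|(1-\Pi_0)g\|_{\cup\mathcal T^{\mathrm{irr}}}\|D^2_h w\|$. This second-derivative transfer together with the $e$-based vertex cancellation is the missing idea in your outline.
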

\begin{proof}
We consider the edges orthogonal to the $x$ or $y$ axis separately.
For a rectangle $T$ we denote by $n_x$ the $x$ component of the
outer unit normal (left $-1$, right $1$, top and bottom $0$).
 Fix any $T$ with local coordinates $(\xi,\eta)$.
 For $(1-Q)\partial_x w$ we use the expression from Lemma~\ref{l:Qdx}
 and the fundamental theorem of calculus so that
 \begin{equation}
  \label{e:intFsplit}
   \int_{\partial T} g (1-Q)\partial_x w n_x
   =
   -\frac{h_y^3}{3}\int_T \partial_x g  \partial^4_{xyyy}w(\eta^3-\eta)
   +\frac{h_y^{2}}{2} \int_T \partial_x g  \partial^3_{xyy}w (\eta^2-1),
 \end{equation}
where is has been used that the fourth and third derivatives of $w$
appearing in Lemma~\ref{l:Qdx} do not depend on $x$.
 Since the function $\eta^3-\eta$ has vanishing average over $T$,
 and since the fourth derivative of $w$ is constant,
 we can use orthogonality to constants and the Cauchy inequality
 for the first term on the right-hand side of \eqref{e:intFsplit}
 to see
 \begin{subequations}
   \label{e:consist-term1}
 \begin{equation}
   -\frac{h_y^3}{3}\int_T \partial_x g  \partial^4_{xyyy}w (\eta^3-\eta)
  \leq
   \frac{h_y^3}{3} \|(1-\Pi_0)\partial_x g\|_T \|\partial^4_{xyyy}w\|_T
   \|\eta^3-\eta\|_{L^\infty(T)}.
 \end{equation}
 With the inverse estimate and $0\leq\eta\leq 1$ we obtain
 that this is bounded by some constant times
 \begin{equation}
   h_y\|(1-\Pi_0)\partial_x g\|_T \|\partial^2_{xy}w\|_T.
 \end{equation}
 \end{subequations}
 Next, consider the second term of \eqref{e:intFsplit}.
 With the global bilinear interpolation $Q$ from \eqref{e:Qdef}
 we obtain with the abbreviation $e:=w-Qw$
 (note that $\partial^3_{xyy} w$ and $\partial^3_{xyy}e$ coincide)
 and integration by parts with respect to $y$ that
 $$
  \frac{h_y^{2}}{2}\int_T \partial_x g (\eta^2-1) \partial^3_{xyy} w
  =
  -\frac{h_y^{2}}{2}\int_T \partial^2_{xy} g (\eta^2-1) \partial^2_{xy} e
  -
  h_y\int_T \partial_x g \eta \partial^2_{xy} e
  .
 $$
 Again, with integration by parts with respect to $x$,
 $$
  -h_y\int_T \partial_x g \eta \partial^2_{xy} e
  =
  h_y\int_T \partial^2_{xx} g \eta \partial_{y} e
  - h_y\int_{\partial T} \partial_x g \eta \partial_{y} e n_x.
 $$
 Integrating by parts along any edge $E$ parallel to the $y$ axis
 with end points $z_-,z_+$
 reveals
 $$
 - h_y\int_{E} \partial_x g \eta \partial_{y} e
 =
 h_y\int_{E} \partial^2_{xy} g \eta  e
 +
 \int_{E} \partial_x g  e
 - h_y (  (\partial_x g  e) (z_+) +(\partial_x g  e) (z_-)).
 $$
 Combining the three foregoing displayed identities yields
\begin{align}
  \label{e:consist-term2}
  \begin{aligned}
 &
 \frac{h_y^{2}}{2}\int_T \partial_x g (\eta^2-1) \partial^3_{xyy} w
  \\
 &\,
 =
  -\frac{h_y^{2}}{2}\int_T \partial^2_{xy} g (\eta^2-1) \partial^2_{xy} e
  +
  h_y\int_T \partial^2_{xx} g \eta \partial_{y} e
  +
  h_y\int_{\partial T} \partial^2_{xy} g \eta  e n_x
  +
  R(T)
 \end{aligned}
 \end{align}
 with
 $$
 R(T):=
  \int_{\partial T} \partial_x g  e n_x
  -
  \sum_{z\in\mathcal V(T)}
  h_y n_x (\partial_x g  e) (z).
 $$
 We note that the inverse inequality implies for any second-order
 derivative $\partial^2_{jk}$ of the polynomial $g|_T$ that
 $$
 \|\partial^2_{jk} g\|_T
 \lesssim h_T^{-1}
 \|(1-\Pi_0)\partial_j g\|_T.
 $$
 The combination of
 \eqref{e:intFsplit}--\eqref{e:consist-term2}
 with this estimate,
 the bound $0\leq\eta\leq 1$, trace and inverse inequalities,
  and the approximation and
 stability properties of $Q$ from Lemma~\ref{l:Q1stab},
 lead to
$$
\int_{\partial T} g (1-Q)\partial_x w n_x
 \lesssim
  h_T\|(1-\Pi_0)\partial_x g\|_T 
 \|D_h^2 w\|_{\omega_T}
  + R(T)
 .
$$
Of course we have
$h_T\|(1-\Pi_0)\partial_x g\|_T\lesssim \|(1-\Pi_1)g\|_T$
for the polynomial $g|_T$.
Considering the sum over all $T$,
since $\partial_x g$ and $e$ are globally continuous
and since $e$ vanishes on $\partial\Omega$,
we have that
$$
\sum_{T\in\mathcal T}  \int_{\partial T} \partial_x g  e n_x= 0 .
$$
We further note that $e(z)=0$ for every regular vertex $z$.
Therefore,
$$
\sum_{T\in\mathcal T} R(T)
\lesssim
\sum_{T\in\mathcal T}
\sum_{z\in\mathcal V(T)\cap\mathcal V^{\mathrm{irr}}}
      | h_T (\partial_x g  e) (z)|.
$$
Given $z\in\mathcal \mathcal V(T)\cap\mathcal V^{\mathrm{irr}}$,
trace and inverse estimates show
with the stability properties of $Q$ from Lemma~\ref{l:Q1stab}
that
$$
  | h_y (\partial_x g  e) (z)|
      \lesssim
        \|(1-\Pi_0) g\|_T \|D^2_h w\|_{\omega_T} .
$$
Combining the above estimates results in the 
asserted estimate
with
$n_T$ replaced by $n_x$. An analogous argument shows the same bound for
$n_T$ replaced by $n_y$, so that eventually the full assertion follows.
\end{proof}

\textbf{\emph{Proof of Theorem~\ref{t:pri}}.}
The abstract a~priori error estimate \eqref{e:strang} shows
that the error is bounded by $A+B$.
We start by bounding $A$.
Let $I_h u$ denote the standard Adini interpolation of $u$
described in \S\ref{app:qiAdini} of the appendix.
On any element $T\in\mathcal T^{\mathrm{reg}}$ with regular
vertices, the standard interpolation bound shows
$\|D_h^2 (u-I_h u)\|_T\lesssim h_T^2 \|D^4 u\|_T$.
If $T\in\mathcal T^{\mathrm{irr}}$ contains an irregular
vertex, $I_h$ preserves quadratic functions and therefore
the standard interpolation bound shows
$\|D_h^2 (u-I_h u)\|_T\lesssim h_T \|D^3 u\|_T$.
Altogether,
$$
 A^2
 \leq
 \|D_h^2 (u-I_h u)\|_{\cup \mathcal T^{\mathrm{reg}}}^2
 +
 \|D_h^2 (u-I_h u)\|_{\cup \mathcal T^{\mathrm{irr}}}^2
 \lesssim
 h^4 \|u\|_{H^4(\cup \mathcal T^{\mathrm{reg}})}^2
 +
 h^2 \|u\|_{H^3(\cup \mathcal T^{\mathrm{irr}})}^2.
$$
For bounding the term $B$,
consider any $v_h\in V_h$ with $\LLL v_h\RRR_h=1$.
Then, the solution property of $u_h$,
integration by parts,
and $\Delta^2 u =f $
show
$$
a(u-u_h,v_h)
=
\sum_{T\in\mathcal T}\int_{\partial T}
            \partial^2_{nn} u \nabla v_h\cdot n_T
=
\sum_{T\in\mathcal T}\int_{\partial T}
            \partial^2_{nn} u (1-Q)\nabla v_h\cdot n_T
$$
because $Q\nabla v_h$ is continuous
and vanishes on the boundary.
Let $\bm g:=\mathcal JD^2u\in [C^1(\overline\Omega)]^{2\times2}$ denote
the (component-wise)
BFS averaging of $D^2 u$ defined in \S\ref{app:bfsaveraging}.
Adding and subtracting $\bm g$ in the above identity results in
$$
a_h(u-u_h,v_h)
=
\sum_{T\in\mathcal T}\int_{\partial T}
            (\partial^2_{nn} u-\bm g_{nn}) (1-Q)\nabla v_h\cdot n_T
+\sum_{T\in\mathcal T}\int_{\partial T}
            \bm g_{nn} (1-Q)\nabla v_h\cdot n_T.
$$
Trace inequalities and the approximation
and discrete stability properties
of $Q$ and $\mathcal J$
from Lemma~\ref{l:Q1stab} and Lemma~\ref{l:bfsaveraging}
bound the first sum on the right-hand side
as follows
$$
\sum_{T\in\mathcal T}\int_{\partial T}
            (\partial^2_{nn} u-\bm g_{nn}) (1-Q)\nabla v_h\cdot n_T
\lesssim
h^2 \|u\|_{H^4(\Omega)} \LLL v_h\RRR_h.
$$
The second term in the above split is bounded with the help of
Lemma~\ref{l:consist}.
A piecewise use of Poincar\'e's inequality
and Lemma~\ref{l:bfsaveraging}
then conclude the proof of the 
first stated upper error bound in Theorem~\ref{t:pri}.
Under uniform mesh refinement, the area covered by elements with
irregular vertices scales like
$$
     \operatorname{meas} (\cup \mathcal T^{\mathrm{irr}}) \lesssim h.
$$
The first error bound and 
the assumed $L^\infty$ bound on the third derivatives thus
imply
$$
\LLL u-u_h\RRR_h^2
\lesssim
 h^4 \|u\|_{H^4(\cup \mathcal T^{\mathrm{reg}})}^2
 +
 h^3 \|u\|_{W^{3,\infty}(\cup \mathcal T^{\mathrm{irr}})}^2
$$
and thus the second stated upper error bound.
The stated lower error bound follows from Lemmas~\ref{l:lowerbound}--\ref{l:admL}.

\section{A posteriori error estimate, Proof of Theorem~\ref{t:post}}\label{s:post}

We define the projection operator
$
 \Pi^{\mathcal T}
$
by
\begin{equation}\label{e:PiTproj}
        \Pi^{\mathcal T} v|_T :=
        \begin{cases}
         \Pi_1 v|_T &\text{if }T\in\mathcal T^{\mathrm{reg}} \\
         \Pi_0 v|_T &\text{if }T\in\mathcal T^{\mathrm{irr}}  .
        \end{cases}
\end{equation}
Any edge is equipped with a fixed normal vector $n_E$ and tangential
vector $t_E$.
The jump across $E$ is denoted by $[\cdot]_E$; for boundary
edges, $[\cdot]_E$ denotes the trace.

For any $T\in\mathcal T$ define the local error estimator contribution by
\begin{subequations}\label{e:estimator}
\begin{align}
 \bm\eta^2(T)
 =
 h_T^4 \|f\|_T^2
 +
 \sum_{j=1}^3
 \sum_{E\in\mathcal E(T)}
 \kappa_{j}^E
 h_T^{2j-3} \left\|
           \left[\frac{\partial^j u_h}{\partial n_E^j}\right]_E
            \right\|_E^2
 +
  \|(1-\Pi^{\mathcal T}) D^2 u_h\|_T^2
 ,
\end{align}
where $\mathcal E(T)$ is the set of edges of $T$ and
\begin{align*}
  \kappa_j^E=\begin{cases} 0 &\text{if }j\geq 2 \text{ and }E\subseteq\partial\Omega,\\
                            1 &\text{otherwise}
              \end{cases}
\end{align*}
is introduced for excluding boundary edges from the sums when
second- and third-order normal derivatives of $u_h$
are considered.
Define  the total error estimator
\begin{align}
\bm\eta = \left(\sum_{T\in\mathcal T}\bm\eta^2(T)\right)^{1/2} .
\end{align}
\end{subequations}

\textbf{\emph{Proof of Theorem~\ref{t:post}}.}
As in \cite{CarstensenGallistlHu2013}, the error is
orthogonally split as follows
$$
\LLL u-u_h\RRR_h^2
=
\left[
\sup_{\varphi\in H^2_0(\Omega)\setminus\{0\}}
\frac{a_h(u-u_h,\varphi)}{\LLL\varphi\RRR_h}\right]^2
+
\min_{v\in H^2_0(\Omega)} \LLL u_h-v\RRR_h^2
.
$$
Since the second term on the right-hand side is directly bounded
by $\bm\eta^2$ after plugging in the BFS averaging $v=\mathcal J_0u_h$
with zero boundary conditions
from \S\ref{app:bfsaveraging} and using
the bound from Lemma~\ref{l:bfsaveraging} and inverse estimates,
it remains to bound the first term.
Let $\varphi\in H^2_0(\Omega)$ with $\LLL\varphi\RRR_h=1$
and denote by $I_h\mathcal J\varphi$ its Adini quasi-interpolation
from \S\ref{app:qiAdini} of the appendix
and abbreviate $\hat\varphi:=\varphi-I_h\mathcal J\varphi$.
Equation~\ref{e:bih} and the discrete solution property \eqref{e:bihd}
yield
$$
a_h(u-u_h,\varphi) = \int_\Omega f\hat\varphi - a_h(u_h,\hat \varphi) .
$$
The first term on the right-hand side is readily bounded by
$\bm\eta$ through \eqref{e:adiniQI}.
For the analysis of the second term, consider its contribution
on any element $T$.
Two integrations by parts reveal
$$
\int_T D^2 u_h : D^2\hat \varphi
=
\int_{\partial T} \partial_{nn}^2 u_h \partial_n \hat\varphi
+
\int_{\partial T} \partial_{nt}^2 u_h \partial_t \hat\varphi
-
\int_{\partial T} (\operatorname{div} D^2u_h)\cdot n \hat\varphi
.
$$
Summing over all elements and noting that $\hat\varphi$ and so
$\partial_t\hat\varphi$ is continuous, we obtain
\begin{equation*}
a_h(u_h,\hat \varphi)
=
\sum_{T\in\mathcal T}
 \int_{\partial T} \partial_{nn}^2 u_h \partial_n \hat\varphi
 +
\sum_{E\in\mathcal E}
\left(
 \int_E
  [\partial_{nt}^2 u_h]_E \partial_t \hat\varphi
-
\int_{E} [\operatorname{div} D^2u_h]_E\cdot n_E \hat\varphi 
\right)
.
\end{equation*}
Standard estimates \cite{Verfuerth2013}
with \eqref{e:adiniQI} bound the last two terms by $\bm\eta$.
In particular, by $\hat\varphi=\partial_t\hat\varphi=0$ on $\partial\Omega$
the boundary edges do not contribute to the sum.
For the analysis of the first sum on the right-hand side,
denote by $\bm g:=\mathcal J D^2_h u_h$ the component-wise
BFS averaging of the piecewise Hessian
from \S\ref{app:bfsaveraging}.
We have with 
$\varphi\in H^2_0(\Omega)$
and
the continuity of
$\bm g$ and $Q\nabla I_h\mathcal J\varphi$ that
$$
\sum_{T\in\mathcal T}
 \int_{\partial T} \partial_{nn}^2 u_h \partial_n \hat\varphi
=
\sum_{T\in\mathcal T}
\left(
\int_{\partial T} (\partial_{nn}^2 u_h - \bm g_{nn}) \partial_n \hat \varphi
 +
 \int_{\partial T} \bm g_{nn} (Q-1) \nabla I_h\mathcal J\varphi \cdot n_T
 \right).
$$
The trace and inverse inequalities and Lemma~\ref{l:consist}
show that this is bounded by a constant times
$$
\sum_{j=1,2}
(\|\partial_{jj}^2 u_h - \bm g_{jj}\|
+
\|(1-\Pi^{\mathcal T})\bm g_{jj}\|
)
,
$$
where we used \eqref{e:adiniQI} and $\|D^2_h \varphi\| = 1$.
Since, obviously,
$$
 \|(1-\Pi^{\mathcal T}) \bm g_{jj}\|
 \leq
 \|(1-\Pi^{\mathcal T}) \partial_{jj}^2 u_h\| 
   + \|\bm g_{jj}-\partial_{jj}^2 u_h\|
$$
we eventually have
$$
\sum_{T\in\mathcal T}
 \int_{\partial T} \partial_{nn}^2 u_h \partial_n \hat\varphi
 \lesssim
 (\|D^2_h u_h - \bm g\|
 +\|(1-\Pi^{\mathcal T}) D^2 u_h\| ) .
$$
The last term is part of (and thus bounded by) $\bm\eta$.
The bound of the first term follows from Lemma~\ref{l:bfsaveraging}.
This concludes the proof of reliability.
The efficiency follows from known arguments
\cite{Verfuerth2013,CarstensenGallistlHu2013}.

\section{Extension to other boundary conditions}\label{s:bc}
This section explains how the proofs shown for clamped boundary 
conditions in the foregoing sections extend to other boundary conditions,
which are relevant in plate bending, flow problems,
or phase separation models.
As the prototypical situation in linear plate theory,
we consider a disjoint partition of $\partial\Omega$
as
$\partial\Omega = \Gamma_C \cup \Gamma_S \cup \Gamma_F$
in clamped ($\Gamma_C$), simply supported ($\Gamma_S$),
and free ($\Gamma_F$) part
and consider the energy space
$$
 V= \{
       v\in H^2(\Omega):
      v=0 \text{ on }\Gamma_C\cup\Gamma_S
     \text{ and }\partial_n v=0 \text{ on } \Gamma_C
   \}
$$
instead of $H^2_0(\Omega)$ from prior sections
where $\partial\Omega=\Gamma_C$ was assumed.
The variational problem is then to find $u\in V$
solving
\eqref{e:bih} with $H^2_0(\Omega)$ replaced by $V$.
The strong form reads
$$
\Delta^2 u = f \text{ in }\Omega
\quad\text{and}\quad
u=0\text{ on }\Gamma_C\cup\Gamma_S
\text{ and }
\partial_nu=0\text{ on }\Gamma_C
$$
with the natural boundary conditions
$$
 \partial^2_{nn} u =0 \text{ on }\Gamma_S\cup\Gamma_F
 \text{ and }
 \partial_n(\partial^2_{nn} u+2\partial^2_{tt} u) = 0 \text{ on }\Gamma_F.
$$
Here $n$ denotes the outer unit vector on $\partial\Omega$
and $t=(-n_2,n_1)$ is a unit tangent vector.
The problem is well posed provided the boundary configuration is
such that $V$ does not contain affine functions.
For the discretization with the Adini FEM we assume that the 
boundary edges match with the decomposition of the boundary
and that $\Gamma_C$ and $\Gamma_C\cup\Gamma_S$ are relatively
closed sets
and replace the definition of $\widehat V_h$
from \S\ref{s:prelim} by
$$
\widehat V_h
:=
C(\overline\Omega)\cap\left\{v\in\mathcal A(\mathcal T) \left|
                \begin{aligned}
                                  &\nabla v \text{ is continuous in the interior
                                  regular vertices of } \mathcal T
                                  \\
                               &v \text{ and } \nabla v\cdot t
                                  \text{ vanish on the vertices of }
                                \Gamma_C\cup\Gamma_S
                                 \\
                            & \nabla v\cdot n
                                  \text{ vanishes on the vertices of }
                                \Gamma_C 
                \end{aligned}
        \right.
       \right\}.
$$
The Adini finite element space is then $V_h$ from
\eqref{e:Vh_av} where the modified version of 
$\widehat V_h$ is used.
The discrete problem seeks $u_h\in V_h$ solving
\eqref{e:bihd} where again this modified $V_h$ is used.

The results of \S\ref{s:hanging} as well as
Lemma~\ref{l:Qdx} hold without modifications.
The error bound of Lemma~\ref{l:consist} holds with
$\cup\mathcal T^{\mathrm{irr}}$
replaced by
$$
(\cup\mathcal T^{\mathrm{irr}})
\cup
\{ T\in\mathcal T: T\cap\Gamma_F\neq\emptyset\}
$$
on the right-hand side.
In the proof, only the analysis of the term 
$\sum_{T\in\mathcal T}R(T)$ requires modifications.
Indeed, since $e$ therein vanishes on $\Gamma_C\cup\Gamma_S$,
but not necessarily on $\Gamma_F$, we have
$$
\sum_{T\in\mathcal T}  \int_{\partial T} \partial_x g  e n_x
=
\sum_{E\subseteq\overline{\Gamma}_F}
     \int_E \partial_x g  e n_x,
$$
where the sum on the right-hand side is over all edges contained
in the closure of $\Gamma_F$.
This term is then bounded with trace and inverse inequalities
and leads to the increased integration domain of $(1-\Pi_0)g$
mentioned above.
Theorem~\ref{t:pri} applies to this situation as well,
again with the same replacement of $\cup\mathcal T^{\mathrm{irr}}$
stemming from the use of Lemma~\ref{l:consist} in the 
proof.
Note that in the proof of Theorem~\ref{t:pri}
the product
$\partial^2_{nn} u Q\nabla v_h\cdot n$
still vanishes on the whole boundary due to the natural
boundary condition satisfied by $u$.
In particular, the asymptotic convergence rate $h^{3/2}$
remains true because the area covered by the element touching
the free boundary $\Gamma_F$ scales like $h$.

For the extension of Theorem~\ref{t:post},
we need to modify the projection $\Pi^{\mathcal T}$ 
from \eqref{e:PiTproj} in so far as 
$\Pi^{\mathcal T}v|_T$ should equal $\Pi_0v|_T$
also on the elements having an edge on $\overline{\Gamma}_F$.
With this definition, the error estimator contribution $\bm\eta^2(T)$
is defined as
\begin{align*}
 \bm\eta^2(T)
 =
 h_T^4 \|f\|_T^2
 &
 +
 \sum_{j=1}^3
 \sum_{E\in\mathcal E(T)}
 \kappa_{j}^E
 h_T^{2j-3} \left\|
           \left[\frac{\partial^j u_h}{\partial n_E^j}\right]_E
            \right\|_E^2
 +
  \|(1-\Pi^{\mathcal T}) D^2 u_h\|_T^2
  \\
  &
   +
   \sum_{\substack{ E\in\mathcal E(T) \\ E\subseteq\overline{\Gamma}_F}}
 h_T^{3} \left\|
           \left[\partial_{n_E}\left(
              \frac{\partial^2 u_h}{\partial n_E^2}+2\frac{\partial^2 u_h}{\partial t_E^2}
              \right)\right]_E
            \right\|_E^2
 ,
\end{align*}
where 
\begin{align*}
  \kappa_j^E=\begin{cases}
                           0 &\text{if }j\in\{2,3\} \text{ and }E\subseteq\overline{\Gamma}_C,\\
                           0 &\text{if }j\in\{1,3\} \text{ and }E\subseteq\overline{\Gamma}_S\cup\overline{\Gamma}_F,\\
                            1 &\text{otherwise} 
              \end{cases}
\end{align*}
such that the residuals of the natural boundary conditions 
on $\Gamma_S$ and $\Gamma_F$
and of the essential boundary conditions on $\Gamma_C$
and $\Gamma_S$ are adequately included.
With these modifications, Theorem~\ref{t:post} holds for the 
situation of general boundary as well.
In the proof, $H^2_0(\Omega)$ needs to be replaced by $V$,
the BFS averaging $\mathcal J$ needs to be defined accordingly 
so that essential boundary conditions correspond to 
$\Gamma_C$ and $\Gamma_S$
(see
 \cite[Proposition~2.5]{Gallistl2015IMA}
 for a similar operator).
The same applies to the Adini interpolation $I_h$.
With these adaptations, the proof of Theorem~\ref{t:post} is very
similar to the one given in \S\ref{s:post}.
As a major modification,
on the edges of $\Gamma_F$, the prior statement 
$\hat\varphi=\partial_t\hat\varphi=0$ does not hold. Instead,
after integration by parts in tangential direction, the 
additional residual enters the error estimator.

\section{Numerical results}\label{s:num}

\subsection{Illustration of Theorem~\ref{t:pri} on quasi-uniform meshes}
\label{ss:result_unif}
We start by numerically illustrating the upper and lower
a~priori error bounds
in an elementary setting with the
square $\Omega=(-1,1)^2$ and $f$ such that the
exact solution is given by the biquartic polynomial
$u=-(x^4-2x^2+1)(y^4-2y^2+1)$.
We consider a regular coarse initial partition consisting of four congruent
squares, and sequences of meshes following two refinement variants.
In the first variant (Variant~I), the coarse mesh is uniformly refined once,
and thereafter only one element containing the point $(0,0)$ is refined,
resulting in an irregular partition.
From this third mesh on, again uniform refinements are performed.
In the second variant (Variant~II), five uniform refinements of the initial mesh are
performed before the seventh mesh is generated by refining only one
element containing $(0,0)$. After that, the refinements are again uniform.

For the sake of a consistent presentation, in convergence history plots
of this work, the horizontal axis displays
the squareroot of the number of degrees of freedom $\mathtt{ndof}$.
For these quasi-uniform meshes in this example, that quantity
is proportional to $h$, the parameter used in Theorem~\ref{t:pri}.
Starting from the third mesh in the sequence, the partitions of Variant~I
are irregular. Theorem~\ref{t:pri} predicts the error to decrease as
$h^{3/2}$ for $V_h$ as in \eqref{e:Vh_av} and not better than $h$
for any other choice.
Up to the seventh mesh, the partitions of Variant~II are uniform, so that
Theorem~\ref{t:pri} states errors of the order $h^2$ on these regular
partitions. After the local refinement, the partitions are irregular,
and Theorem~\ref{t:pri} predicts the error for $V_h$ as in \eqref{e:Vh_av}
to gradually deteriorate to $h^{3/2}$, while any other method must immediately
deteriorate from $O(h^2)$ to $O(h)$.

Figure~\ref{f:conv_priori} (left) displays the convergence history of the
$\LLL\cdot\RRR_h$ error with respect to the squareroot
of the number of degrees of freedom $\mathtt{ndof}$
(the dashed lines showing the asymptotic rates are labelled with
powers of the proportional parameter $h$ used in Theorem~\ref{t:pri}).
Two assignments are compared: the assignment \eqref{e:Vh_av},
abbreviated by $V_h$ in the legend, and the enforcement of
strong continuity of the normal derivative in irregular vertices,
abbreviated by `hard'.
The results are as expected:
for Variant~I, the `hard' interpolation results in
convergence $O(h)$, while the method \eqref{e:Vh_av} reaches the
predicted $O(h^{3/2})$.
For Variant~II,
as soon as a single element is refined and, thus, the partition becomes
irregular, the `hard' interpolation method immediately deteriorates to $O(h)$.

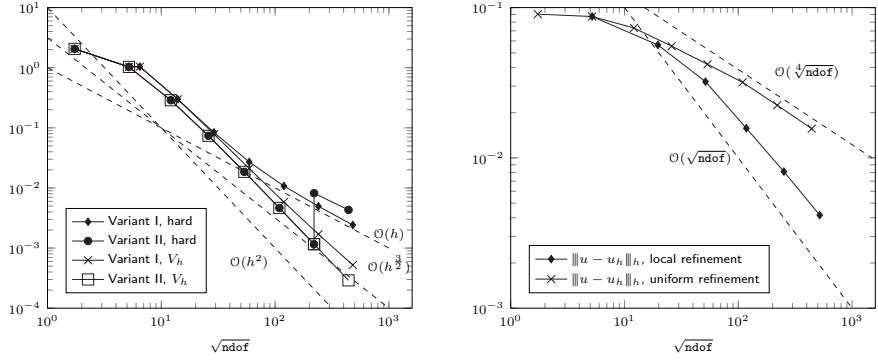
\begin{figure}
\pgfplotstableread{
   sqrtndofA                sqrtndofB                errA                     errB
   1.7320508075688772e+00   1.7320508075688772e+00   2.0591764199176046e+00   2.0591764199176046e+00
   5.1961524227066320e+00   5.1961524227066320e+00   1.0301870180234125e+00   1.0301870180234125e+00
   6.4807406984078604e+00   1.2124355652982141e+01   1.0408233668239601e+00   2.8800754816898716e-01
   1.3964240043768941e+01   2.5980762113533160e+01   2.9809148952765230e-01   7.3756874928884331e-02
   2.9034462281915950e+01   5.3693575034635195e+01   8.4018698630546756e-02   1.8545663212466982e-02
   5.9219929077971713e+01   1.0911920087683927e+02   2.7076295256141362e-02   4.6429767717423827e-03
   1.1961187232043481e+02   2.1997045256124741e+02   1.0764653343088104e-02   1.1611492073577174e-03
   2.4040590674939747e+02   2.2000454540758926e+02   4.9583075182261472e-03   8.1792817454156008e-03
   4.8199896265448541e+02   4.4172729143669625e+02   2.4204452211083146e-03   4.3152292459655269e-03
    }\unifhardI
\pgfplotstableread{
   sqrtndofA                sqrtndofB                errA                     errB
   1.7320508075688772e+00   1.7320508075688772e+00   2.0591764199176046e+00   2.0591764199176046e+00
   5.1961524227066320e+00   5.1961524227066320e+00   1.0301870180234125e+00   1.0301870180234125e+00
   6.4807406984078604e+00   1.2124355652982141e+01   1.0433594478581825e+00   2.8800754816898716e-01
   1.3964240043768941e+01   2.5980762113533160e+01   2.9807751680941419e-01   7.3756874928884331e-02
   2.9034462281915950e+01   5.3693575034635195e+01   7.8992566072808329e-02   1.8545663212466982e-02
   5.9219929077971713e+01   1.0911920087683927e+02   2.1075215530947076e-02   4.6429767717423827e-03
   1.1961187232043481e+02   2.1997045256124741e+02   5.8258067324902449e-03   1.1611492073577174e-03
   2.4040590674939747e+02   2.2000454540758926e+02   1.6965534353542029e-03   1.1611867402397775e-03
   4.8199896265448541e+02   4.4172729143669625e+02   5.2326441128610864e-04   2.9029535204012159e-04
    }\unifQ

	\begin{tikzpicture}[scale=.7]
		\begin{loglogaxis}[legend pos=south west,legend cell align=left,
			legend style={fill=none},
			ymin=1e-4,ymax=1e1,
			xmin=1e0,xmax=1.6e3,
			ytick={1e-4,1e-3,1e-2,1e-1,1e0,1e1},
			xtick={1e0,1e1,1e2,1e3,1e4}]
			\pgfplotsset{
				cycle list={%
					{Black, mark=diamond*, mark size=2pt},
					{Black, mark=*, mark size=2pt},
					{Black, mark=x, mark size=3pt},
					{Black, mark=square, mark size=3pt},
					{Cyan, mark=x, mark size=2.5pt},
					{Cyan, mark=diamond, mark size=3pt},
					{Cyan, mark=*, mark size=1.5pt},
					{Orange, mark=square*, mark size=2.5pt},
					{Orange, mark=square*, mark size=2pt},
					{Orange, mark=square*, mark size=1.5pt},
				},
				legend style={
					at={(0.05,.05)}, anchor=south west},
				font=\sffamily\scriptsize,
				xlabel=$\sqrt{\mathtt{ndof}}$,ylabel=
			}
			\addplot+ table[x=sqrtndofA,y=errA]{\unifhardI};
			\addlegendentry{Variant I, hard}
			\addplot+ table[x=sqrtndofB,y=errB]{\unifhardI};
			\addlegendentry{Variant II, hard }
			\addplot+ table[x=sqrtndofA,y=errA]{\unifQ};
			\addlegendentry{Variant I, $V_h$}
			\addplot+ table[x=sqrtndofB,y=errB]{\unifQ};
			\addlegendentry{Variant II, $V_h$}
			\addplot+ [color=Black,dashed,mark=none] coordinates{(1,1) (1e3,1e-3)};
			\node(z) at  (axis cs:1000,1e-3)
			[above] {$O (h)$};
            \addplot+ [color=Black,dashed,mark=none] coordinates{(1,3.1623) (1e3,1e-4)};
			\node(z) at  (axis cs:1000,5e-4)
			[below] {$O (h^{\frac32})$};
			\addplot+ [color=Black,dashed,mark=none] coordinates{(1,10) (1e3,1e-5)};
			\node(z) at  (axis cs:100,1e-3)
			[below left] {$O (h^{2})$};
		\end{loglogaxis}
	\end{tikzpicture}
\hfil
	\begin{tikzpicture}[scale=.7]
\pgfplotstableread{
   sqrtndofAD               Q1AD
   5.1961524227066320e+00   8.7260770129033993e-02
   1.9672315572906001e+01   5.6411851544589754e-02
   5.1526692111953004e+01   3.2166942498869262e-02
   1.1779218989389746e+02   1.5748196482369217e-02
   2.5160087440229614e+02   8.1086668373767951e-03
   5.1982593240430015e+02   4.1614812327366900e-03
   }\CircLOC
\pgfplotstableread{
   sqrtndofUN               Q1UN
   1.7320508075688772e+00   9.0415467437701347e-02
   5.1961524227066320e+00   8.7260770129033993e-02
   1.2124355652982141e+01   7.3143255499293805e-02
   2.5980762113533160e+01   5.5356053009262565e-02
   5.3693575034635195e+01   4.1952164407156357e-02
   1.0911920087683927e+02   3.1829766836920445e-02
   2.1997045256124741e+02   2.2403884127015543e-02
   4.4167295593006372e+02   1.5659673967495489e-02
   }\CircUN

		\begin{loglogaxis}[legend pos=south west,legend cell align=left,
			legend style={fill=white},
			ymin=1e-3,ymax=1e-1,
			xmin=1e0,xmax=1.6e3,
			ytick={1e-3,1e-2,1e-1,1e0,1e1,1e2},
			xtick={1e0,1e1,1e2,1e3}]
			\pgfplotsset{
				cycle list={%
					{Black, mark=diamond*, mark size=2pt},
					{Black, mark=x, mark size=3pt},
					{Black, mark=triangle, mark size=3pt},
					{Cyan, mark=x, mark size=2.5pt},
					{Cyan, mark=diamond, mark size=3pt},
					{Cyan, mark=*, mark size=1.5pt},
					{Orange, mark=square*, mark size=2.5pt},
					{Orange, mark=square*, mark size=2pt},
					{Orange, mark=square*, mark size=1.5pt},
				},
				legend style={
					at={(0.05,.05)}, anchor=south west},
				font=\sffamily\scriptsize,
				xlabel=$\sqrt{\mathtt{ndof}}$,ylabel=
			}
			\addplot+ table[x=sqrtndofAD,y=Q1AD]{\CircLOC};
			\addlegendentry{$\LLL u-u_h\RRR_h$, local refinement}
			\addplot+ table[x=sqrtndofUN,y=Q1UN]{\CircUN};
			\addlegendentry{$\LLL u-u_h\RRR_h$, uniform refinement}
			\addplot+ [color=Black,dashed,mark=none] coordinates{(1e1,1e-1) (1e3,1e-3)};
			\node(z) at  (axis cs:1e2,1e-2)
			[left] {$O (\sqrt{\mathtt{ndof}})$};
			\addplot+ [color=Black,dashed,mark=none] coordinates{(1.5e1,1e-1) (1.5e3,1e-2)};
			\node(z) at  (axis cs:4e2,3e-2)
			[above] {$O (\sqrt[4]{\mathtt{ndof}})$};
		\end{loglogaxis}
	\end{tikzpicture}

	\caption{Left: Numerical illustration of Theorem~\ref{t:pri}
	         with the errors $\LLL u-u_h\RRR_h$ for a smooth $u$ on the
	         unit square, setting of \S\ref{ss:result_unif}.
	         Right: Convergence history
	          for the disk domain from \S\ref{ss:results_Circ}.
		\label{f:conv_priori}
	}
\end{figure}

\subsection{Approximation of a curvilinear domain}
\label{ss:results_Circ}
As an example for local resolution of a curved boundary,
consider $\Omega$ as the unit disk with $f=1$
and the exact solution given by
$u(x,y)=2^{-6}(x^2+y^2-1)^2$
and discretization by the averaging assignment
\eqref{e:Vh_av}.
For the domain approximation, we consider a partition of the
square $(-1,1)^2$ where all degrees of freedom related to vertices
outside $\Omega$ are set to zero.
On a sequence of uniformly refined meshes this results in
convergence of order $h^{1/2}$ as can be seen in the convergence
history of Figure~\ref{f:conv_priori} (right).
In a locally refined variant, from the $j$-th uniform refinement of
the background mesh, the actual partition is generated by repeating
$j$ times: mark all elements that touch the boundary $\partial\Omega$
for local refinement and generate the smallest $1$-irregular partition
where the marked elements are refined.
Figure~\ref{f:meshes} (left) shows an instance of such a mesh;
and the convergence history showing that this local refinement variant improves
the convergence order to $\sqrt{\mathtt{ndof}}$.

\begin{figure}
 \begin{center}
 \includegraphics[width=.3\textwidth]{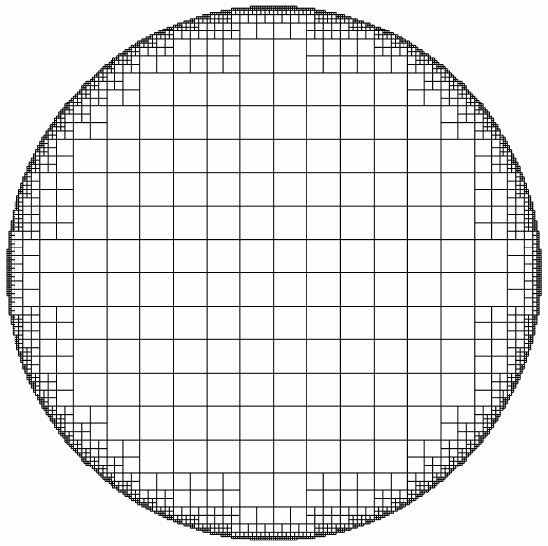}
 \hfill
 \includegraphics[width=.3\textwidth]{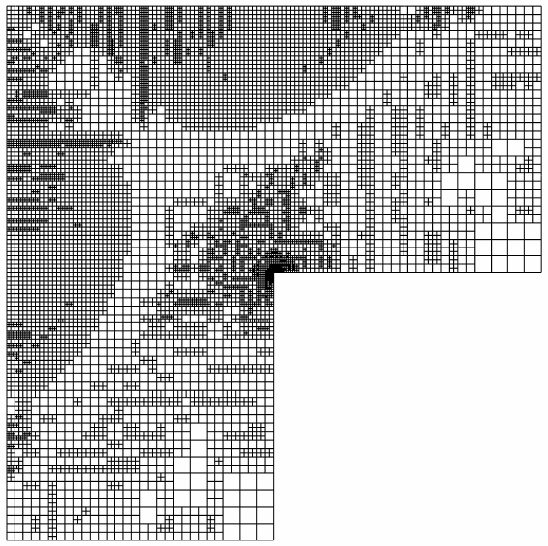}
 \hfill
 \includegraphics[width=.3\textwidth]{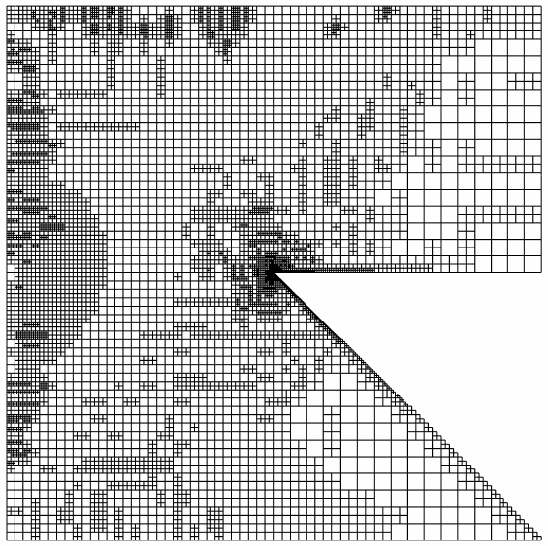}
 \end{center}
 	\caption{
 	  Left: Locally resolved disk, 13\,875 degrees of freedom.
      Middle: Adaptive mesh of the
 	    L-domain, $37\,119$ degrees of freedom, level $17$.
 	  Right: Adaptive mesh of the cusp domain,
 	             $32\,130$ degrees of freedom, level $19$.}
		\label{f:meshes}
\end{figure}

\subsection{Adaptive mesh refinement}\label{ss:results_L}
In this experiment, we consider the error estimator $\bm \eta$
with its local contributions $\bm\eta(T)$ as
a refinement indicator in an adaptive mesh-refinement algorithm
with D\"orfler marking and bulk parameter chosen as 1/2
in a standard adaptivity loop \cite{Verfuerth2013}.
We consider the L-shaped domain
$\Omega = \left(-1,1\right)^2 \setminus \left([0,1]\times[-1,0]\right)$.
With $\alpha=0.5444837\dots$ and $\omega=3\pi/2$,
the exact singular solution from \cite[p.\ 107]{Grisvard1992}
reads in polar coordinates as
\begin{equation}\label{e:GrisvardExactSolution}
u(r,\theta)=(r^2\cos^2\theta-1)^2(r^2\sin^2\theta-1)^2r^{1+\alpha}g(\theta),
\end{equation}
with the function
\begin{align*}
	g(\theta) =\left(\frac{s_-(\omega)}{\alpha-1}-\frac{s_+(\omega)}{\alpha+1} \right)(c_-(\theta)-c_+(\theta))
	-\left(\frac{s_-(\theta)}{\alpha-1}-\frac{s_+(\theta)}{\alpha+1} \right)(c_-(\omega)-c_+(\omega))
\end{align*}
and the abbreviations $s_\pm(z)=\sin((\alpha\pm1)z)$
and $c_\pm(z)=\cos((\alpha\pm1)z)$.
The convergence history with respect to the squareroot of the number of degrees
of freedom ($\mathtt{ndof}$) is displayed in Figure~\ref{f:convLZ} (left).
As expected, uniform mesh refinement converges with the suboptimal rate
dictated by $\alpha$. Adaptive mesh refinement with the averaging assignment
\eqref{e:Vh_av} recovers first-order convergence,
while the variant enforcing the `hard' interpolation constraint performs
poorly (on the same adaptive meshes)
because its error is bounded from below by certain elements
of large mesh size (Theorem~\ref{t:pri}),
see the adaptive mesh displayed in
Figure~\ref{f:meshes} (middle).

\begin{figure}
\begin{minipage}{.53\textwidth}
	\begin{tikzpicture}[scale=.7]
	\pgfplotstableread{
   sqrtndofAD               Q1AD                     hardAD                   etaAD
   3.8729833462074170e+00   4.3022478091840046e+00   4.3022478091840046e+00   5.8326987378375613e+01
   6.9282032302755088e+00   3.9637917540676040e+00   4.0039195388227711e+00   3.7651689480944540e+01
   8.1240384046359608e+00   2.9998620977409653e+00   3.0491925708286676e+00   2.6643715799263500e+01
   9.6436507609929549e+00   1.6238067238679614e+00   1.6093109480413590e+00   2.2796704705498648e+01
   1.3856406460551018e+01   1.6598615774849976e+00   1.6645280697226394e+00   1.6412803329917363e+01
   1.6248076809271922e+01   1.5239103990749137e+00   1.5125770134438898e+00   1.1015863709559374e+01
   1.9748417658131498e+01   9.5238767873599017e-01   9.0702396862648493e-01   7.7470364419788780e+00
   2.7221315177632398e+01   6.8877149441369512e-01   6.8737930712371220e-01   5.8152934717484008e+00
   3.4073450074801642e+01   5.4968271853636219e-01   5.4961123191142425e-01   4.6024024455016557e+00
   4.0804411526206330e+01   4.2692589969079775e-01   3.4680579314284682e-01   3.4979494251763756e+00
   5.0467811523782167e+01   2.8443304592478286e-01   2.3721232171436160e-01   2.5490173723868992e+00
   6.6543219038456499e+01   2.1705322838343449e-01   2.5926346141129691e-01   1.8909608516247518e+00
   8.0665977958492519e+01   1.6987086782544378e-01   2.2711809441439992e-01   1.4413401990439751e+00
   9.8224233262469397e+01   1.2729997723861303e-01   1.6828644345114996e-01   1.1036444762269590e+00
   1.2342609124492276e+02   9.4966992315919332e-02   1.8455235317038923e-01   8.9172465960258773e-01
   1.5553777676178865e+02   8.1423437421030628e-02   1.6129066998595804e-01   7.3917160737521792e-01
   1.9266291807195282e+02   6.3695977384618546e-02   1.8240926376382408e-01   5.9499673592239666e-01
    }\LshapeAD
\pgfplotstableread{
   sqrtndofUN               Q1UN                     hardUN                   etaUN
   3.8729833462074170e+00   4.3022478091840046e+00   4.3022478091840046e+00   5.8263668999475733e+01
   9.9498743710661994e+00   1.5788650946306912e+00   1.5788650946306912e+00   2.2690864451513431e+01
   2.1977260975835911e+01   7.9281859223301021e-01   7.9281859223301021e-01   8.8240815096899432e+00
   4.5989129150267672e+01   5.0551098045540577e-01   5.0551098045540577e-01   4.6859887638893447e+00
   9.3994680700558789e+01   3.4217676500145439e-01   3.4217676500145439e-01   3.0271052792969160e+00
   1.8999736840282816e+02   2.3404169829031551e-01   2.3404169829031551e-01   2.0529925295843419e+00
    }\LshapeUN

		\begin{loglogaxis}[legend pos=north east,legend cell align=left,
			legend style={fill=white},
			ymin=1e-2,ymax=1e2,
			xmin=1e0,xmax=1.6e3,
			ytick={1e-2,1e-1,1e0,1e1,1e2},
			xtick={1e0,1e1,1e2,1e3}]
			\pgfplotsset{
				cycle list={%
					{Black, mark=diamond*, mark size=2pt},
					{Black, mark=*, mark size=2pt},
					{Black, mark=x, mark size=3pt},
					{Black, mark=triangle, mark size=3pt},
					{Black, mark=square, mark size=3pt},
					{Black, mark=pentagon*, mark size=2pt},
					{Cyan, mark=*, mark size=1.5pt},
					{Orange, mark=square*, mark size=2.5pt},
					{Orange, mark=square*, mark size=2pt},
					{Orange, mark=square*, mark size=1.5pt},
				},
				legend style={
					at={(.8,1)}, anchor=north west},
				font=\sffamily\scriptsize,
				xlabel=$\sqrt{\mathtt{ndof}}$,ylabel=
			}
			\addplot+ table[x=sqrtndofAD,y=Q1AD]{\LshapeAD};
			\addlegendentry{error, adapt, $V_h$}
			\addplot+ table[x=sqrtndofAD,y=etaAD]{\LshapeAD};
			\addlegendentry{$\bm\eta$, adapt, $V_h$}
			\addplot+ table[x=sqrtndofUN,y=Q1UN]{\LshapeUN};
			\addlegendentry{error, unif, $V_h$}
			\addplot+ table[x=sqrtndofUN,y=etaUN]{\LshapeUN};
			\addlegendentry{$\bm\eta$, unif, $V_h$}
			\addplot+ table[x=sqrtndofUN,y=hardUN]{\LshapeUN};
			\addlegendentry{error, unif, hard}
			\addplot+ table[x=sqrtndofAD,y=hardAD]{\LshapeAD};
			\addlegendentry{error, adapt, hard}
			\addplot+ [color=Black,dashed,mark=none] coordinates{(1,1e1) (1e3,1e-2)};
			\node(z) at  (axis cs:10,3e-1)
			[above] {$O (\sqrt{\mathtt{ndof}})$};
			\addplot+ [color=Black,dashed,mark=none] coordinates{(1,6) (1e3,0.6*0.2333)};
			\node(z) at  (axis cs:600,6e-1)
			[below] {$O (\sqrt{\mathtt{ndof}}^{0.544})$};
		\end{loglogaxis}
	\end{tikzpicture}
\end{minipage}
%
    \begin{minipage}{.43\textwidth}
	\begin{tikzpicture}[scale=.7]
\pgfplotstableread{
   sqrtndofAD               Q1AD                     hardAD                   etaAD
   1.7320508075688772e+00   4.0107601950656715e+00   0.0000000000000000e+00   0.0000000000000000e+00
   3.4641016151377544e+00   4.0168889329175332e+00   0.0000000000000000e+00   0.0000000000000000e+00
   4.8989794855663558e+00   2.4705310834020699e+00   0.0000000000000000e+00   1.9392317451148344e+01
   6.4807406984078604e+00   1.5718262633644728e+00   0.0000000000000000e+00   2.1550520880366683e+01
   8.4852813742385695e+00   1.2418742870714869e+00   0.0000000000000000e+00   1.5642073430929887e+01
   1.0246950765959598e+01   9.7583841575292019e-01   0.0000000000000000e+00   9.8496049914437780e+00
   1.3856406460551018e+01   6.5276439431244104e-01   0.0000000000000000e+00   8.6187958554803856e+00
   1.7058722109231979e+01   5.0086222055453355e-01   0.0000000000000000e+00   6.2873627951617497e+00
   2.0199009876724155e+01   4.5105930410555062e-01   0.0000000000000000e+00   4.3565691152118484e+00
   2.5865034312755125e+01   3.6098458624741520e-01   0.0000000000000000e+00   3.2913774540078755e+00
   3.1749015732775089e+01   2.4085507259908803e-01   0.0000000000000000e+00   2.2541634662881842e+00
   3.9458839313897720e+01   1.8356192349145059e-01   0.0000000000000000e+00   1.6380303380642838e+00
   4.8805737367649719e+01   1.3414677662021468e-01   0.0000000000000000e+00   1.2088380662407567e+00
   6.2928530890209096e+01   9.7217024152665463e-02   0.0000000000000000e+00   8.9635766996550392e-01
   7.7711003081931707e+01   7.5097580693276350e-02   0.0000000000000000e+00   6.6698805013911622e-01
   9.4535707539532382e+01   6.4318620217875849e-02   0.0000000000000000e+00   5.4127663978426732e-01
   1.1671760792613941e+02   4.8732553960678091e-02   0.0000000000000000e+00   4.2214478866926936e-01
   1.4666287873896380e+02   3.9069885141247720e-02   0.0000000000000000e+00   3.4934175435671089e-01
   1.7924843095547587e+02   3.1827594570709071e-02   0.0000000000000000e+00   2.9026276988512451e-01

   }\ZshapeAD
\pgfplotstableread{
   sqrtndofUN               Q1UN                     hardUN                   etaUN
   1.7320508075688772e+00   4.0107601950656715e+00   0.0000000000000000e+00   0.0000000000000000e+00
   5.1961524227066320e+00   1.5694176112035383e+00   0.0000000000000000e+00   2.2117623324641922e+01
   1.2124355652982141e+01   6.2335009151984688e-01   0.0000000000000000e+00   8.8361487184243135e+00
   2.5980762113533160e+01   3.4723312715891186e-01   0.0000000000000000e+00   3.8641829172953677e+00
   5.3693575034635195e+01   2.3465298050892577e-01   0.0000000000000000e+00   2.3020952769501952e+00
   1.0911920087683927e+02   1.6525822020680153e-01   0.0000000000000000e+00   1.5715856738028864e+00
   2.1997045256124741e+02   1.1720181869989550e-01   0.0000000000000000e+00   1.1039190212037320e+00
   }\ZshapeUN
		\begin{loglogaxis}[legend pos=south west,legend cell align=left,
			legend style={fill=none},
			ymin=1e-2,ymax=1e2,
			xmin=1e0,xmax=1.6e3,
			ytick={1e-2,1e-1,1e0,1e1,1e2},
			xtick={1e0,1e1,1e2,1e3}]
			\pgfplotsset{
				cycle list={%
					{Black, mark=diamond*, mark size=2pt},
					{Black, mark=*, mark size=2pt},
					{Black, mark=x, mark size=3pt},
					{Black, mark=triangle, mark size=3pt},
					{Cyan, mark=x, mark size=2.5pt},
					{Cyan, mark=diamond, mark size=3pt},
					{Cyan, mark=*, mark size=1.5pt},
					{Orange, mark=square*, mark size=2.5pt},
					{Orange, mark=square*, mark size=2pt},
					{Orange, mark=square*, mark size=1.5pt},
				},
				legend style={
					at={(-0.11,.5)}, anchor=east},
				font=\sffamily\scriptsize,
				xlabel=$\sqrt{\mathtt{ndof}}$,ylabel=
			}
			\addplot+ table[x=sqrtndofAD,y=Q1AD]{\ZshapeAD};
			\addplot+ table[x=sqrtndofAD,y=etaAD]{\ZshapeAD};
			\addplot+ table[x=sqrtndofUN,y=Q1UN]{\ZshapeUN};
			\addplot+ table[x=sqrtndofUN,y=etaUN]{\ZshapeUN};
%
			\addplot+ [color=Black,dashed,mark=none] coordinates{(1,1e1) (1e3,1e-2)};
			\node(z) at  (axis cs:5e2,4e-2)
			[above] {$O (\sqrt{\mathtt{ndof}})$};
			\addplot+ [color=Black,dashed,mark=none] coordinates{(1,20) (1e3,0.6109)};
			\node(z) at  (axis cs:600,3e0)
			[below] {$O (\sqrt{\mathtt{ndof}}^{0.505})$};
		\end{loglogaxis}
	\end{tikzpicture}
	\end{minipage}
	\caption{%
	Convergence history of the error $\LLL u-u_h\RRR_h$
	and the estimator $\bm\eta$ for the L-shaped (left) and the cusp (right) domain
	from \S\S\ref{ss:results_L}--\ref{ss:results_Z}.
	\label{f:convLZ}
	}
\end{figure}
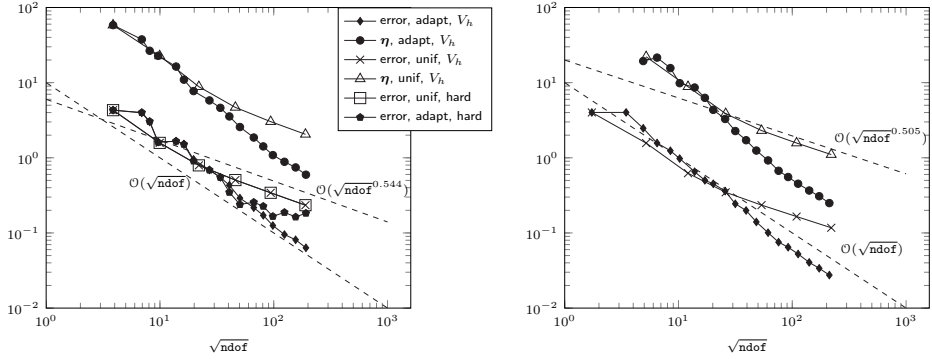

\subsection{Approximation of a non-rectilinear domain}
\label{ss:results_Z}
As an example for adaptive resolution of a non-rectilinear domain,
consider the corner domain
$\Omega=(-1,1)^2\setminus\mathrm{conv}\{(0,0),(1,-1),(1,0)\}$
with exact solution given by \eqref{e:GrisvardExactSolution}
for the
parameters $\alpha=0.50500969\dots$ and $\omega=7\pi/4$.
The line with angle $7\pi/4$ describes the non-rectilinear part of
the boundary.
We use an interior approximation with rectangles
and add on elements $T$ touching the boundary the local error estimator
contribution $h_T^2\bm\eta^2$ to $\eta^2(T)$ in the marking process.
This accounts for the error by the boundary approximation.
Figure~\ref{f:convLZ} (right)
shows the convergence history.
As in the previous example, adaptive mesh refinement improves the
reduced convergence observed on uniform meshes.
An adaptive mesh is displayed in Figure~\ref{f:meshes} (right).

\subsection{Mixed boundary conditions}
\label{ss:results_Lmixed}

In this example, the generalization of
the method and the a~posteriori error estimator from
\S\ref{s:bc} to more general boundary conditions are applied.
On the L-shaped domain, the following mixed configuration
in simply supported and clamped parts is chosen
$$
 \Gamma_S=\big(\{0\}\times(-1,0]\big)
           \cup \big( [0,1)\times\{0\} \big)
 \quad\text{and}\quad
 \Gamma_C=\Omega\setminus\Gamma_S.
$$
and the right-hand $f$ side is chosen according
to the exact solution
$$
u(x,y)
=
r^{4/3} \sin (4\theta/3)
(1 - x^2)^2 (1 - y^2)^2
$$
with polar coordinates $r$, $\theta$ as functions of $x,y$.
This function belongs to
$H^{7/3-\nu}(\Omega)$ for every $\nu>0$ but not
for $\nu=0$
and it is known \cite{BlumRannacher1980}
that this is the generic singularity for
the simply supported boundary condition at an opening
angle $3\pi/2$.
The convergence history of uniform and adaptive mesh refinement
and an adaptive mesh are displayed Figure~\ref{f:convLmix}.
As expected, uniform mesh refinement cannot converge with a
rate higher than 1/3, whereas the adaptive version leads to
errors decreasing like $\sqrt{\mathtt{ndof}}$.

\begin{figure}
\begin{minipage}{.53\textwidth}
	\begin{tikzpicture}[scale=.7]
	\pgfplotstableread{
   sqrtndofAD               Q1AD                     etaAD
   4.1231056256176606e+00   1.4974963855282994e+00   1.6850238315415169e+01
   7.6811457478686078e+00   1.2695579845851015e+00   9.6584332950048690e+00
   9.5393920141694561e+00   1.1428130332226047e+00   7.6326252746854140e+00
   1.2124355652982141e+01   7.6440753637382164e-01   5.7833609105196393e+00
   1.6124515496597098e+01   6.2149953406865976e-01   4.4755202615574641e+00
   1.9235384061671343e+01   4.9671308631429478e-01   3.4480736218153125e+00
   2.2045407685048602e+01   4.0179371750177667e-01   2.6863249709452255e+00
   2.8965496715920477e+01   2.7821741274534306e-01   1.9739259752429708e+00
   3.4423828956117013e+01   2.2687500804461053e-01   1.6023018671692595e+00
   3.9281038682804713e+01   1.9325433804716680e-01   1.3779514390040781e+00
   4.3943145085439660e+01   1.5665624558016417e-01   1.1215204605237823e+00
   4.8187135212627034e+01   1.4069278866783549e-01   1.0094481249172398e+00
   5.4845236803208351e+01   1.1722669293980809e-01   8.5149594540344054e-01
   6.8330081223426035e+01   9.3493431465054497e-02   7.2044601527434982e-01
   8.0777472107017559e+01   7.5488426635847844e-02   5.7842767287300334e-01
   9.3744333162063725e+01   6.2674440382293584e-02   4.7389527126402314e-01
   1.1083771921146699e+02   5.1684746998972284e-02   3.9286739686708905e-01
   1.3268383473505730e+02   3.9942468624675084e-02   3.0246039013672588e-01
   1.5468031548972223e+02   3.2523070810002316e-02   2.5502915230851597e-01
   1.8499729727755485e+02   2.7385906222469465e-02   2.1844534896007733e-01
    }\LshapeADmixedBC
\pgfplotstableread{
   sqrtndofUN               Q1UN                     etaUN
   4.1231056256176606e+00   1.4974963855282994e+00   1.6850238315415169e+01
   1.0246950765959598e+01   7.6369435600623758e-01   6.3309035623531091e+00
   2.2293496809607955e+01   5.3105409906134970e-01   3.0056401214302677e+00
   4.6314144707637645e+01   4.0776027918087321e-01   2.0677478661364401e+00
   9.4323910012255112e+01   3.1951474867118396e-01   1.5927520202443850e+00
   1.9032866310674279e+02   2.5187714955573487e-01   1.2508790589741170e+00
    }\LshapeUNmixedBC

		\begin{loglogaxis}[legend pos=north east,legend cell align=left,
			legend style={fill=white},
			ymin=1e-2,ymax=1e2,
			xmin=1e0,xmax=1.6e3,
			ytick={1e-2,1e-1,1e0,1e1,1e2},
			xtick={1e0,1e1,1e2,1e3}]
			\pgfplotsset{
				cycle list={%
					{Black, mark=diamond*, mark size=2pt},
					{Black, mark=*, mark size=2pt},
					{Black, mark=x, mark size=3pt},
					{Black, mark=triangle, mark size=3pt},
					{Black, mark=square, mark size=3pt},
					{Black, mark=pentagon*, mark size=2pt},
					{Cyan, mark=*, mark size=1.5pt},
					{Orange, mark=square*, mark size=2.5pt},
					{Orange, mark=square*, mark size=2pt},
					{Orange, mark=square*, mark size=1.5pt},
				},
				legend style={
					at={(1,1)}, anchor=north east},
				font=\sffamily\scriptsize,
				xlabel=$\sqrt{\mathtt{ndof}}$,ylabel=
			}
			\addplot+ table[x=sqrtndofAD,y=Q1AD]{\LshapeADmixedBC};
			\addlegendentry{error, adapt, $V_h$}
			\addplot+ table[x=sqrtndofAD,y=etaAD]{\LshapeADmixedBC};
			\addlegendentry{$\bm\eta$, adapt, $V_h$}
			\addplot+ table[x=sqrtndofUN,y=Q1UN]{\LshapeUNmixedBC};
			\addlegendentry{error, unif, $V_h$}
			\addplot+ table[x=sqrtndofUN,y=etaUN]{\LshapeUNmixedBC};
			\addlegendentry{$\bm\eta$, unif, $V_h$}
%
			\addplot+ [color=Black,dashed,mark=none] coordinates{(4e-1,1e1) (1e3,4e-3)};
			\node(z) at  (axis cs:10,8e-2)
			[above] {$O (\sqrt{\mathtt{ndof}})$};
			\addplot+ [color=Black,dashed,mark=none] coordinates{(1,6) (1e3,6e-1)};
			\node(z) at  (axis cs:600,2e0)
			[below] {$O (\sqrt{\mathtt{ndof}}^{1/3})$};
		\end{loglogaxis}
	\end{tikzpicture}
\end{minipage}
\hfill
    \begin{minipage}{.43\textwidth}
	\includegraphics[width=.7\textwidth]{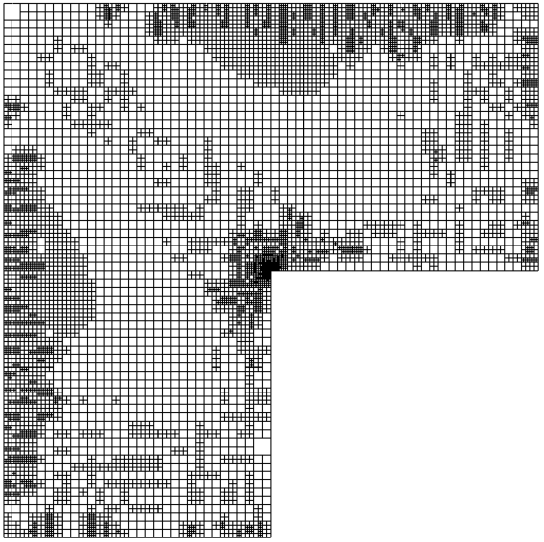}
	\end{minipage}
	\caption{%
	Convergence history of the error $\LLL u-u_h\RRR_h$
	and the estimator $\bm\eta$ for the L-shaped domain with 
	mixed boundary conditions from \S\ref{ss:results_Lmixed} (left).
	Adaptive mesh of this problem
	with $34\,224$ degrees of freedom, level $20$
 	    (right).
	\label{f:convLmix}
	}
\end{figure}

\appendix
\section{Properties of the Adini basis functions}
\label{app:adinibasis}

The Adini basis function $\varphi_{z,\alpha}\in\widehat V_h$
with respect to a regular vertex $z\in\mathcal V^{\mathrm{reg}}$
and a multiindex
$\alpha\in\{(0,0),(1,0),(0,1)\}$
satisfies
$$
 \partial^\beta \varphi_{z,\alpha} (\tilde z)
 =
 \delta_{z,\tilde z} \, \delta_{\alpha,\beta}
 \quad \text{for all }
 \tilde z \in\mathcal V^{\mathrm{reg}}
 \text{ and }
 \beta\in\{(0,0),(1,0),(0,1)\}
$$
with the Kronecker $\delta$.
This uniquely defines $\varphi_{z,\alpha}$
on regular partitions. On partitions with irregular vertices,
it uniquely defines $\varphi_{z,\alpha}$ on elements
$T$ with $\mathcal V(T)\subseteq\mathcal V^{\mathrm{reg}}$,
that is, on elements with only regular vertices.

In what follows we consider two rectangles $T$, $K$ sharing
an edge lying on the $y$ axis, with one vertex being $z=(0,0)$,
as shown in Figure~\ref{f:T1T4}.
The ratio of the $x$-widths of $T$, $K$ is denoted by
$$
 \rho=\operatorname{diam}_x (T) / \operatorname{diam}_x (K).
$$
The following results are formulated in the coordinates $(x,y)$
if $T,K$ are considered. If only $T$ is considered,
the usual local coordinates $\xi,\eta$ are employed.

\begin{figure}
 \begin{center}
 \begin{tikzpicture}[scale=1]
  \draw (0,0)--(3,0)--(3,1)--(0,1)--cycle;
  \draw (1,0)--(1,1);
  \fill (1,0) circle (.4ex);
  \node at (1,0) [below right]{$z=(0,0)$};
  \node at (.5,.5) {$K$};
  \node at (1.5,.5) {$T$};
\end{tikzpicture}
 \end{center}
 \caption{Vertex $z=(0,0)$ shared by $T$, $K$ with common edge lying on
          the $y$ axis.}
 \label{f:T1T4}
\end{figure}
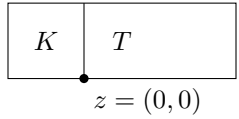

\begin{lemma}\label{l:nodalintegral}
In the configuration of Figure~\ref{f:T1T4},
$\varphi=\varphi_{z,(0,0)}$ satisfies
$$
 \mathrm{(a)} \int_T  q\, \partial^2_{xx} \varphi = 0,
 \quad
 \mathrm{(b)} \int_{T\cup K} x\,\partial^2_{xx} \varphi  = 0,
 \quad
 \mathrm{(c)} \int_{T\cup K} q\,\partial^2_{xy} \varphi  = 0
$$
for any integrable function $q=q(y)$ depending only on $y$.
\end{lemma}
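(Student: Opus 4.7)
The plan is to prove the three claims in the order (a), (c), (b), writing $T = [0, h_T^x] \times [0, h_y]$ and $K = [-h_K^x, 0] \times [0, h_y]$ (so that $z=(0,0)$ and $z':=(0,h_y)$ are the two endpoints of the shared edge). For (a), Fubini reduces the integral to $\int_0^{h_y} q(y)\,[\partial_x\varphi(h_T^x, y) - \partial_x\varphi(0, y)]\,dy$, so it is enough to show that the bracket vanishes identically in $y$. The structure $\mathcal A = P_3 + \langle x^3 y, x y^3\rangle$ entails that the coefficients of $y^2$ and $y^3$ in the polynomial $y \mapsto \partial_x\varphi(x_0, y)$ arise from $\partial_x(x y^2) = y^2$ and $\partial_x(x y^3) = y^3$ and are therefore independent of $x_0$. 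Consequently the difference is a polynomial in $y$ of degree at most one; since all first partial derivatives of $\varphi_{z,(0,0)}$ vanish at each of the four vertices of $T$, it has the two distinct roots $y = 0$ and $y = h_y$ and must be identically zero.

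For (c), Fubini and the fundamental theorem of calculus give
\[
\int_T q\,\partial^2_{xy}\varphi\,dA = \int_0^{h_y} q(y)\,\bigl[\partial_y\varphi|_T(h_T^x, y) - \partial_y\varphi|_T(0, y)\bigr]\,dy.
\]
On the right edge of $T$, the trace $\varphi|_T(h_T^x, \cdot)$ is the univariate Hermite cubic interpolating function value and tangential derivative at the two endpoints; all four data vanish by the DOFs of $\varphi_{z,(0,0)}$, so $\varphi|_T$ is identically zero on this edge and in particular $\partial_y\varphi|_T(h_T^x, \cdot) = 0$. The analogous computation on $K$ yields $\int_K q\,\partial^2_{xy}\varphi\,dA = \int_0^{h_y} q(y)\,\partial_y\varphi|_K(0, y)\,dy$, and summing invokes global continuity of $\varphi$ across the shared edge (hence $\partial_y\varphi|_T(0, \cdot) = \partial_y\varphi|_K(0, \cdot)$) to cancel the two contributions.

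Claim (b) is the delicate one. Integrating by parts in $x$ on each element, reusing the Hermite argument to see that $\varphi|_T \equiv 0$ on the right edge of $T$ and $\varphi|_K \equiv 0$ on the left edge of $K$, and cancelling the traces of $\varphi$ on the shared edge by continuity, the identity reduces to
\[
\int_{T \cup K} x\,\partial^2_{xx}\varphi\,dA = h_T^x \int_0^{h_y} \partial_x\varphi|_T(h_T^x, y)\,dy + h_K^x \int_0^{h_y} \partial_x\varphi|_K(-h_K^x, y)\,dy,
\]
and by (a) each integrand may be replaced by the corresponding normal derivative on the shared edge. To show $\int_0^{h_y} \partial_x\varphi|_T(0, y)\,dy = 0$, introduce $\varphi' := \varphi_{z',(0,0)}$. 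The twelve DOFs of $(\varphi + \varphi')|_T$ coincide with those of the univariate Hermite cubic $g(x) := 1 - 3(x/h_T^x)^2 + 2(x/h_T^x)^3$, so uniqueness of the Adini shape function gives $(\varphi + \varphi')|_T = g(x)$ and hence $\partial_x(\varphi + \varphi')|_T(0, \cdot) = g'(0) = 0$. Because the reflection $y \mapsto h_y - y$ preserves $T$ and interchanges the DOFs of $\varphi$ and $\varphi'$, we have $\varphi'(x,y) = \varphi(x, h_y - y)$, whence $\partial_x\varphi|_T(0, y) = -\partial_x\varphi|_T(0, h_y - y)$ is antisymmetric about $y = h_y/2$ and its integral on $[0, h_y]$ vanishes. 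The same argument handles $K$.

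The main obstacle I anticipate is identifying the one-dimensional Hermite cubic hidden inside the Adini space for (b); once that structural identity is in place, the reflection antisymmetry closes the argument.
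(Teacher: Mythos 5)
Your proof is correct, but for parts (b) and (c) it follows a genuinely different route from the paper. The paper's argument for (b) and (c) rests on the single observation that the restrictions of $\varphi_{z,(0,0)}$ to $K$ and $T$ are related by the left--right reflection $\varphi|_K(x,y)=\varphi|_T(-\rho x,y)$, $\rho$ being the ratio of the $x$-widths; a one-line change of variables then shows that the contributions of $K$ and $T$ to each integral cancel pairwise, with no need to evaluate either one. You instead reduce everything to edge integrals (Fubini, the fundamental theorem of calculus, integration by parts in $x$), kill the outer vertical edges by the Hermite-trace argument, cancel the shared-edge terms by $C^0$ continuity, and for (b) supply the remaining piece via the identity $(\varphi_{z,(0,0)}+\varphi_{z',(0,0)})|_T=g(x)$ together with the top--bottom reflection $\varphi_{z',(0,0)}|_T(x,y)=\varphi_{z,(0,0)}|_T(x,h_y-y)$, which makes $\partial_x\varphi|_T(0,\cdot)$ antisymmetric about $y=h_y/2$. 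All steps check out: the claim that the $y^2$- and $y^3$-coefficients of $\partial_x\varphi(x_0,\cdot)$ do not depend on $x_0$ is exactly what $\mathcal A=P_3+\langle x^3y,xy^3\rangle$ provides, and the unisolvence and invariance facts you invoke are standard. What the paper's symmetry buys is brevity and uniformity across (b) and (c); what your argument buys is more local information --- it identifies the shared-edge traces explicitly and shows that the two edge integrals $\int_0^{h_y}\partial_x\varphi|_T(0,y)\,dy$ and $\int_0^{h_y}\partial_x\varphi|_K(0,y)\,dy$ vanish individually rather than only in combination --- at the price of the extra structural identity with the Hermite cubic $g$. Your proof of (a) is essentially the paper's (FTC in $x$, vanishing of $\partial_x\varphi$ at the four vertices, and a degree count), merely phrased through monomial coefficients instead of the factorization $\partial^2_{xx}\varphi=(1-\eta)p_1(\xi)$.
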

\begin{proof}
 For the proof of (a), we use local coordinates $\xi,\eta$,
 write $\hat q(\eta)=q(y)$,
 and observe that $\varphi$ vanishes identically at $\eta=1$
 and $\partial^2_{xx} \varphi$ is bilinear, so that there exists a first-order
 polynomial $p_1=p_1(\xi)$ such that
 $\partial^2_{xx}\varphi=(1-\eta)p_1(\xi)$.
 Since $\partial_x\varphi$ vanishes at all vertices of $T$,
 considering $\eta=-1$ with the fundamental theorem of calculus
 shows that
 $$ \int_{-1}^1 \partial^2_{xx} p_1(\xi)\,d\xi =0.
 $$
 The original integral then reads
 $$
 \int_T  \hat q(\eta) \partial^2_{xx} \varphi
 =
 h_x h_y\int_{-1}^1 \hat q(\eta) (1-\eta)\,d\eta
  \int_{-1}^1\partial^2_{xx}p_1(\xi)\,d\xi
  =0.
 $$
 For the proof of (b), we use the symmetry
 $\varphi|_{K}(x,y) = \varphi|_{T}(-\rho x,y)$
 for $x\in K$.
 By the change of variables $\hat x= -\rho x$
 we have for the (undirected) volume integrals that
 $$
   \int_{K} x\partial^2_{xx} \varphi(x,y) \, dxdy
   =
   \int_{T} \frac{-\hat x}{\rho} \partial^2_{xx}\varphi (\hat x,y)
                   \lvert-\rho^{-1}\rvert  \, d\hat x dy
   =
   -\int_{T} \hat x \partial^2_{\hat x\hat x}\varphi (\hat x,y)   \, d\hat x dy
   .
 $$
 This implies (b). An analogous computation shows (c).
\end{proof}

\begin{lemma}\label{l:varphiz10}
In the configuration of Figure~\ref{f:T1T4},
$\varphi=\varphi_{z,(1,0)}$ satisfies
$$
 \mathrm{(a)} \int_T  x \,\partial^2_{xx} \varphi = 0,
 \quad
 \mathrm{(b)} \int_{T} q\,\partial^2_{xy} \varphi  = 0,
 \quad
 \mathrm{(c)} \int_{T\cup K} q\, \partial^2_{xx} \varphi  = 0
$$
for any integrable function $q=q(y)$ depending only on $y$.
\end{lemma}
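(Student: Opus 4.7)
The strategy parallels the proof of Lemma~\ref{l:nodalintegral}, but exploits a stronger structural property of the basis function $\varphi = \varphi_{z,(1,0)}$. Place $z$ at the lower-left corner of $T$ in local coordinates $(\xi,\eta) \in [-1,1]^2$ and write $\varphi|_T = h_x\hat\varphi$ with $\hat\varphi = \hat\varphi_{(-1,-1),(1,0)}$, whose unique nonzero nodal datum is $\partial_\xi\hat\varphi(-1,-1) = 1$. The restriction of $\hat\varphi$ to any edge of the reference square is a cubic polynomial in the tangential variable, determined by Hermite-type data at the two endpoints. On each of the three edges $\xi = \pm 1$ and $\eta = 1$, the value of $\hat\varphi$ and its tangential derivative at the endpoints all vanish, since the unique nonzero DoF $\partial_\xi\hat\varphi(-1,-1)$ is tangential only to the remaining bottom edge $\eta=-1$. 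Hence $\hat\varphi \equiv 0$ on those three edges.

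The triple vanishing implies that $(1-\xi^2)(1-\eta)$ divides $\hat\varphi$ as a polynomial. Since $\hat\varphi$ has total degree at most four and the Adini space excludes the monomial $\xi^2\eta^2$, the quotient reduces to $\alpha + \beta\xi$: writing it as $\alpha+\beta\xi+\gamma\eta$ and demanding that the $\xi^2\eta^2$ term in the product vanish forces $\gamma = 0$. The remaining nodal condition $\partial_\xi\hat\varphi(1,-1) = 0$ then yields $\alpha + \beta = 0$, which immediately implies $\partial_\xi\hat\varphi(1,\eta) \equiv 0$ on the entire right edge of $T$.

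With these two structural facts---$\varphi$ vanishing on the left, right, and top edges of $T$ and $\partial_x\varphi$ vanishing on the right edge---the three identities follow by elementary integration. For (b), Fubini's theorem and the fundamental theorem of calculus in $x$ show that $\int \partial^2_{xy}\varphi\,dx$ equals the difference of $\partial_y\varphi$ between the right and left edges of $T$, which is zero because $\varphi$ vanishes on both; multiplication by $q(y)$ and integration in $y$ then yield (b). For (a), integration by parts in $x$ combined with the vanishing of $\varphi$ on the two vertical edges reduces the integral to a boundary term proportional to $\int \partial_x\varphi\,dy$ over the right edge, which vanishes by the structural identity above. For (c), the basis function on $K$ satisfies the reflection relation $\varphi|_K(x,y) = -\rho^{-1}\varphi|_T(-\rho x,y)$ (the minus sign arising because the governing DoF is a first derivative, not a function value), so the substitution $\hat x = -\rho x$ turns $\int_K q(y)\partial^2_{xx}\varphi$ into $-\int_T q(y)\partial^2_{xx}\varphi$ and produces the desired cancellation.

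The main obstacle is the factorization step: identifying that the Adini space excludes $\xi^2\eta^2$ is what forces the quotient to depend on $\xi$ alone, and this in turn pins down the identity $\partial_x\varphi \equiv 0$ on the right edge that is essential for (a).
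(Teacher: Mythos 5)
Your proof is correct and follows essentially the same route as the paper: the factorization $\hat\varphi=(1-\xi^2)(1-\eta)(\alpha+\beta\xi)$ forcing $\partial_x\varphi\equiv 0$ on the whole edge $\xi=1$, integration by parts in $x$ (resp.\ the fundamental theorem of calculus in $x$) for (a) and (b), and the reflection $\varphi|_K(x,y)=-\rho^{-1}\varphi|_T(-\rho x,y)$ with the substitution $\hat x=-\rho x$ for (c). Your explicit elimination of the $\eta$-dependence of the quotient via the excluded monomial $\xi^2\eta^2$ merely spells out a step the paper states more tersely, and your direct treatment of (b) is a minor streamlining of the paper's double integration by parts.
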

\begin{proof}
 Since $\varphi$ vanishes on all sides apart from $\{\eta=-1\}$,
 it contains the linear factors
 $(\eta-1)(\xi+1)(\xi-1)$. Since $\varphi|_T\in\mathcal A$,
 for $\partial_x\varphi$ to vanish on the
 two endpoints of the face $\xi=1$, an additional factor $(\xi-1)$
 is necessary. The presence of the resulting factor $(\xi-1)^2$
 shows that $\varphi_x$ vanishes on the whole face $\xi=1$.
 Integration by parts then reads
 $$
  \int_{T}x\partial_{xx}^2\varphi
  =
  -
  \int_{T}\partial_{x}\varphi.
 $$
 This equals zero because $\varphi$ has zero boundary conditions,
 which proves (a).
 For the proof of (b),
 integration by parts with respect to $y$ shows
 $$
 h_y\int_{T} \hat q(\eta) \partial^2_{xy}\varphi
 =
   h_y
   \sum_{\sigma=\pm1}
   \int_{\{\eta=\sigma\}} \sigma \hat q(\sigma)\partial_x\varphi
  - \int_{T} \partial_y \hat q(\eta) \partial_x\varphi.
 $$
 Integration by parts with respect to $x$ shows
 that all these integrals vanish because $\varphi$ vanishes identically
 on the edges parallel to the $y$ axis. This shows (b).
 We note the symmetry
 $\varphi|_{K}(x,y) = -\rho^{-1}\varphi|_{T}(-\rho x,y)$.
 A computation analogous to that of the proof of (b) in
 Lemma~\ref{l:nodalintegral} thus proves (c).
\end{proof}

\begin{lemma}\label{l:varphiz01}
In the configuration of Figure~\ref{f:T1T4},
$\varphi=\varphi_{z,(0,1)}$ satisfies
$$
 \int_{T\cup K} q\,\partial^2_{xy} \varphi  = 0
 \quad\text{and}\quad
 \partial^2_{xx} \varphi=0
$$
for any integrable function $q=q(y)$ depending only on $y$.
\end{lemma}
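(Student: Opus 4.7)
The plan is to determine the structure of $\varphi=\varphi_{z,(0,1)}$ on each of $T$ and $K$ and then extract both conclusions from it. First, recall that the trace of any function in $\mathcal A$ on an edge of a rectangle is a cubic polynomial in the tangential variable, hence uniquely determined by the Hermite endpoint data (value and tangential derivative). For $\varphi_{z,(0,1)}$ the only nonzero Adini degree of freedom is $\partial_y\varphi(z)=1$, so the Hermite data vanishes on the three edges $\{\eta=\pm1\}$ and $\{\xi=1\}$ of $T$, and the traces there are identically zero. Consequently,
\[
 \varphi|_T=(\xi-1)(\eta^2-1)\,r(\xi,\eta)
\]
for some polynomial $r$.

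Second, enforce $\varphi|_T\in\mathcal A=P_3+\langle\xi\eta^3,\xi^3\eta\rangle$. The $\xi$- and $\eta$-degree bounds inherited from $\mathcal A$ force $r$ to have bi-degree at most $(2,1)$, so $r$ is a linear combination of the six monomials $1,\xi,\eta,\xi^2,\xi\eta,\xi^2\eta$. A direct check shows that each of $r\in\{\xi,\xi^2,\xi\eta,\xi^2\eta\}$ introduces a monomial outside $\mathcal A$ (for instance $r=\xi$ produces $\xi^2\eta^2$, and $r=\xi\eta$ produces $\xi^2\eta^3$). Only $r=1$ and $r=\eta$ survive, so $r$ is independent of $\xi$. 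Therefore $\varphi|_T$ is affine in $\xi$, and $\partial^2_{xx}\varphi|_T=0$. An identical argument on $K$ (with $z$ at the opposite corner) gives $\partial^2_{xx}\varphi|_K=0$, proving the second assertion.

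For the first assertion, the $\xi$-affinity implies that $\partial^2_{xy}\varphi$ depends only on $y$ on each element. Writing the double integral as an iterated integral and applying the fundamental theorem of calculus in $x$ separately on $T$ and $K$ reduces the inner $x$-integral of $\partial_x\varphi$ to a combination of boundary values of $\varphi$ on the three vertical edges $\{\xi=1\}$ of $T$, $\{\xi'=-1\}$ of $K$, and the shared edge $\{x=0\}$. The first two contributions vanish by the trace analysis of the first step. The two contributions from $\{x=0\}$ coincide by the global continuity of $\varphi$ and enter with opposite signs, so they cancel. Hence the inner bracket is zero for every $y$, differentiating in $y$ yields zero, and the full integral $\int_{T\cup K}q(y)\,\partial^2_{xy}\varphi$ vanishes for any $q=q(y)$.

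The main obstacle is the enumeration in the second step: one must verify carefully that no monomial of $r$ with nonzero $\xi$-degree is compatible with $\mathcal A$. Once this is settled, the remaining manipulations with the fundamental theorem of calculus and the continuity of $\varphi$ across $\{x=0\}$ are short and routine.
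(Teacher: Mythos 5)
Your proof is correct, and it takes a genuinely different route from the paper's for both assertions. For $\partial^2_{xx}\varphi=0$ the paper argues in one line: $\partial^2_{xx}\mathcal A\subseteq Q_1$, and this bilinear function vanishes on the two edges $\{\eta=\pm1\}$ (where the trace of $\varphi$ is identically zero), hence vanishes identically; you instead pin down the full form $\varphi|_T=(\xi-1)(\eta^2-1)(a+b\eta)$ from the vanishing traces and the structure of $\mathcal A$ --- a stronger structural fact that the paper itself only re-derives later, in the proof of Lemma~\ref{l:nonzeroint}. One small point to tighten in your enumeration: checking single monomials of $r$ does not by itself exclude linear combinations, but the conclusion stands because the offending monomials $\xi^3\eta^2,\xi^3\eta^3,\xi^2\eta^2,\xi^2\eta^3$ appear with a triangular coefficient pattern that forces every $\xi$-dependent coefficient of $r$ to vanish. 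For the integral identity, the paper uses the reflection symmetry $\varphi|_K(x,y)=\varphi|_T(-\rho x,y)$ and a change of variables so that the contributions of $T$ and $K$ cancel pairwise, exactly as in part (b) of Lemma~\ref{l:nodalintegral}; your argument instead applies the fundamental theorem of calculus in $x$ to $\partial_x(\partial_y\varphi)$ and uses only that the trace of $\varphi$ vanishes on the two outer vertical edges and is single-valued (a common cubic in $y$) on the shared edge. Your version is somewhat more robust, as it avoids having to justify the reflection symmetry of the basis function, while the paper's is shorter because it reuses machinery already set up for the other basis functions.
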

\begin{proof}
 With the symmetry
 $\varphi|_{K}(x,y) = \varphi|_{T}(-\rho x,y)$,
 an argument similar to that of the proof of (b) in
 Lemma~\ref{l:nodalintegral} proves the first identity.
Since the bilinear function $\partial^2_{xx}\varphi$ vanishes on the
faces parallel to the $x$ axis, we have $\partial^2_{xx}\varphi=0$,
which is the second asserted identity.
\end{proof}

\begin{lemma}\label{l:d2orthp1}
Let $z$ be a regular interior vertex whose patch $\omega_z$
does not contain irregular vertices.
Any
$$
  \varphi \in \{\varphi_{z,(0,0)},\varphi_{z,(1,0)},\varphi_{z,(0,1)} \}
$$
out of the three global Adini basis functions related to $z$
satisfies
$$
 \int_{\omega_z} p\,\partial^2_{jk} \varphi \,dx = 0
 \quad\text{for any }p\in P_1
 \text{ and any pair } (j,k)\in\{1,2\}^2 .
$$
\end{lemma}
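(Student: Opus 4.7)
The plan is as follows. Since $P_1=\operatorname{span}\{1,x,y\}$, it suffices to verify $\int_{\omega_z}p\,\partial^2_{jk}\varphi\,dx=0$ for each $p\in\{1,x,y\}$ and each of the independent second-derivative components $(j,k)\in\{(1,1),(1,2),(2,2)\}$. Translating $z$ to the origin, the hypotheses (regular interior vertex, patch free of irregular vertices) imply that $\omega_z$ is exactly the union of four rectangles $T_{\sigma\tau}$ with $\sigma,\tau\in\{-,+\}$, one in each open quadrant around $z$. These four elements admit two natural pairings: $\{T_{-\sigma},T_{+\sigma}\}$ for $\sigma\in\{-,+\}$ share an edge on the $y$-axis through $z$, matching the configuration of Figure~\ref{f:T1T4} that underlies Lemmas~\ref{l:nodalintegral}--\ref{l:varphiz01}, whereas $\{T_{\sigma-},T_{\sigma+}\}$ share an edge on the $x$-axis, matching the $x\leftrightarrow y$-dual configuration. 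The coordinate swap $x\leftrightarrow y$ preserves the Adini shape space $\mathcal A=P_3+\langle xy^3,x^3y\rangle$ and maps $\varphi_{z,(1,0)}$ and $\varphi_{z,(0,1)}$ to one another, so the dual versions of the earlier lemmas are available with identical proofs.

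For $\varphi=\varphi_{z,(0,0)}$, Lemma~\ref{l:nodalintegral}(a) applied elementwise handles $\partial^2_{xx}\varphi$ for $p\in\{1,y\}$; part (b) summed over the two $y$-axis-sharing pairs handles $\partial^2_{xx}\varphi$ for $p=x$; part (c), together with its $x\leftrightarrow y$ dual summed over the corresponding pairs, handles all three cases for $\partial^2_{xy}\varphi$; and the $\partial^2_{yy}\varphi$ row is the coordinate-swapped mirror of the $\partial^2_{xx}\varphi$ row. For $\varphi=\varphi_{z,(1,0)}$, the dual of the second identity in Lemma~\ref{l:varphiz01} yields the elementwise identity $\partial^2_{yy}\varphi_{z,(1,0)}\equiv0$, trivialising three cases; Lemma~\ref{l:varphiz10}(a) per element gives $\partial^2_{xx}\varphi$ for $p=x$, and (c) summed over the two $y$-axis-sharing pairs gives $p\in\{1,y\}$; Lemma~\ref{l:varphiz10}(b) per element gives $\partial^2_{xy}\varphi$ for $p\in\{1,y\}$, and the $x\leftrightarrow y$ dual of the first identity in Lemma~\ref{l:varphiz01}, applied with $q(x)=x$ to the two $x$-axis-sharing pairs, gives the remaining case $p=x$. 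The case $\varphi=\varphi_{z,(0,1)}$ then follows from the $\varphi_{z,(1,0)}$ case by one further coordinate swap.

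The principal difficulty here is not mathematical but organisational: each monomial $p$ must be matched with the pairing (vertical or horizontal) on which the integrand $p\,\partial^2_{jk}\varphi$ reduces to a form directly covered by one of the available orthogonality statements, and the correct dualisation of the lemma statements under $x\leftrightarrow y$ must be tracked throughout. Once this bookkeeping is sorted out, every identity in the assertion is an immediate consequence of Lemmas~\ref{l:nodalintegral}, \ref{l:varphiz10}, or \ref{l:varphiz01}, or of an $x\leftrightarrow y$-dual thereof, applied either elementwise or to one of the four natural pairs partitioning $\omega_z$.
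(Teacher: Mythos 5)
Your proposal is correct and follows exactly the route the paper intends: its own proof of this lemma is the one-line remark that the patch consists of four rectangles and that the assertion ``follows from carefully combining the foregoing three lemmas with suitable changes of coordinates,'' and your write-up simply carries out that combination explicitly, with the pairings and the $x\leftrightarrow y$ dualisations correctly matched to each monomial and derivative component.
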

\begin{proof}
The assumptions on $\omega_z$ imply that the patch is a rectangle
and is formed by four rectangular elements.
The assertion thus follows from carefully combining the foregoing three lemmas
with suitable changes of coordinates.
\end{proof}

\begin{lemma}\label{l:dxxdyyorth_globalbf}
In the configuration of Figure~\ref{f:hanging},
the global Adini basis function $\varphi =\varphi_{z',(0,0)}$
related to the point value at $z'$ 
(with zero assignment for the normal derivatives at hanging nodes)
satisfies
$$
 \int_{\Omega} p\,\partial^2_{jj} \varphi \,dx = 0
 \quad\text{for any }p\in P_1
 \text{ and any } j\in\{1,2\}^2 .
$$
\end{lemma}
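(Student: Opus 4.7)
The proof adapts the strategy of Lemma~\ref{l:d2orthp1} (orthogonality on regular patches) to the irregular configuration, with a virtual-element pairing trick to handle the hanging node. By linearity it suffices to verify the identity for $p \in \{1, x, y\}$ and each $j \in \{1, 2\}$. The support of $\varphi = \varphi_{z',(0,0)}$ consists of the four elements of $\omega_{z'}$ (three with only regular vertices, and a fourth, call it $K_1$, touching the hanging node $\tilde z$) together with the element $K_2$ sharing the horizontal edge through $\tilde z$ with $K_1$. On each element $\varphi$ is expanded in the local Adini basis; the zero normal-derivative assignment at $\tilde z$ forces the coefficient of $\varphi_{\tilde z,(1,0)}^{K_i}$ ($i=1,2$) in the expansion of $\varphi|_{K_i}$ to vanish, leaving only the $z'$-centered nodal basis on $K_1$ and the $\tilde z$-centered value and tangential-derivative bases on $K_1$ and $K_2$.

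For $p$ depending only on $y$ (that is, $p \in \{1, y\}$ paired with $\partial^2_{xx}\varphi$), the element-wise integrals vanish: Lemma~\ref{l:varphiz01} gives $\partial^2_{xx}\varphi_{\tilde z,(0,1)}^{K_i} \equiv 0$, while Lemma~\ref{l:nodalintegral}(a) (with the obvious axis reflection when the relevant nodal vertex is on the right edge of the element) disposes of the $\varphi_{\cdot,(0,0)}^{T^{\ast}}$-contributions on every element in the support. The case $\partial^2_{yy}\varphi$ paired with $p \in \{1, x\}$ follows by swapping $x$ and $y$.

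The delicate case is $p = x$ paired with $\partial^2_{xx}\varphi$ (and, symmetrically, $p = y$ paired with $\partial^2_{yy}\varphi$). The pair $(T_{LL}, T_{LR})$ of elements below $z'$ sharing a full vertical edge through $z'$ is handled directly by Lemma~\ref{l:nodalintegral}(b). For the upper portion of the patch, the mesh-pair $(T_{UL}, K_1)$ shares only half of $T_{UL}$'s right edge and Lemma~\ref{l:nodalintegral}(b) is not directly applicable. My plan is to introduce the \emph{virtual merged element} $K' := K_1 \cup K_2$ together with the Adini nodal basis $\tilde\varphi$ at $z'$ on $K'$ regarded as a single Adini rectangle of half-width. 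The pair $(T_{UL}, K')$ shares the full edge $\{x_{z'}\} \times [y_{z'}, y_{z'} + h_y]$, so Lemma~\ref{l:nodalintegral}(b) applies to this pair and gives $\int_{T_{UL} \cup K'}(x - x_{z'})\,\partial^2_{xx}(\varphi|_{T_{UL}} + \tilde\varphi|_{K'}) = 0$. The correction $\varphi|_{K_i} - \tilde\varphi|_{K_i}$ differs from zero only through normal derivatives at those vertices of $K_i$ that lie in the interior of an edge of $K'$, so it is a linear combination of $\varphi_{\cdot,(1,0)}^{K_i}$-type basis functions; its contribution to $\int x\,\partial^2_{xx}$ vanishes by Lemma~\ref{l:varphiz10}(a) together with a symmetric cancellation between $K_1$ and $K_2$ coming from the reflection $y \mapsto 2 y_{\tilde z} - y$. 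The analogous case $p = y$ against $\partial^2_{yy}\varphi$ is treated the same way, introducing the mesh element $K_3$ adjacent to $K_1$ on the right (on which $\varphi$ vanishes identically since all vertices are regular with vanishing DoFs) to form the virtual rectangle $K_1 \cup K_3$.

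The main obstacle is precisely this pairing step. The mesh-pair $(T_{UL}, K_1)$ does not share a full edge, so the reflection symmetry underlying Lemma~\ref{l:nodalintegral}(b) has to be bridged by the virtual-element construction, and the resulting correction $\varphi - \tilde\varphi$ must be controlled on $K_1 \cup K_2$ (respectively $K_1 \cup K_3$). This control depends essentially on the zero normal-derivative assignment at $\tilde z$, which confines the correction to the span of $\varphi_{\cdot,(1,0)}$-bases whose diagonal second derivatives integrate to zero against the relevant coordinate (by Lemma~\ref{l:varphiz10}(a)). The restriction of the lemma to the diagonal second derivatives $\partial^2_{jj}\varphi$ is then natural: the analogous identity for the mixed derivative $\partial^2_{xy}\varphi$ would require additional cancellations that the zero assignment alone does not provide, consistently with the fact that the existing Lemmas~\ref{l:nodalintegral}(c), \ref{l:varphiz10}(b), and \ref{l:varphiz01} treat $\partial^2_{xy}$ only under more restrictive hypotheses.
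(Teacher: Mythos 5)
There is a genuine gap, and it stems from a misreading of the configuration in Figure~\ref{f:hanging}: you account for only one hanging node, but the refined square has a second hanging node $\check z$ on the edge through $z'$ and $(3,1)$, and this changes the support of $\varphi=\varphi_{z',(0,0)}$. By the continuity constraints, the value and the tangential derivative of $\varphi$ at $\check z$ are inherited from the cubic trace of $\varphi$ on that coarse edge (value $1/2$ and a nonzero tangential slope at the midpoint), so $\varphi$ does \emph{not} vanish on the small element $K_3$ to the right of $K_1$; your parenthetical claim that all vertices of $K_3$ are regular with vanishing DoFs is false, since $\check z$ is one of its vertices. Consequently the expansion of $\varphi|_{K_1}$ also contains the $\check z$-value and $\check z$-tangential-derivative basis functions, which you drop. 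These terms do not vanish element by element: for instance $\int_{K_1}\partial^2_{xx}\varphi_{\check z,(1,0)}\neq0$ (on the reference square this integral equals $1$); they cancel only after pairing $K_1$ with $K_3$ across the vertical edge through $\check z$, via Lemma~\ref{l:nodalintegral} and Lemma~\ref{l:varphiz10}(a),(c) in that orientation --- a pairing your argument forecloses by setting $\varphi|_{K_3}=0$. The same omission breaks the key step of your ``delicate case'': with $\tilde\varphi$ the nodal function at $z'$ on the merged rectangle $K'=K_1\cup K_2$, the difference $\varphi-\tilde\varphi$ is \emph{not} a combination of normal-derivative functions at edge-interior vertices of $K'$. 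At $\tilde z$ and at $\hat z$ the two functions in fact agree (the normal derivative of the Adini value shape function vanishes at edge midpoints, matching the zero assignment), whereas the genuine mismatch sits in the value and tangential-derivative DoFs at the corner $\check z$ of $K'$ and lives on $K_1$ alone, so the asserted reflection cancellation between $K_1$ and $K_2$ does not exist.

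For comparison, the paper's proof organizes exactly this bookkeeping by writing $\varphi=\Phi+\sum_{|\alpha|\leq1}c_\alpha\varphi_{\hat z,\alpha}+\tilde c\,\psi_{\tilde z,\cdot}+\check c\,\psi_{\check z,\cdot}$, where $\Phi$ is the nodal basis function at $z'$ on the coarsened mesh obtained by merging \emph{all four} small squares; $\Phi$ and the $\hat z$-terms are handled by Lemma~\ref{l:d2orthp1}, and the corrections at \emph{both} hanging nodes $\tilde z$ and $\check z$ by Lemmas~\ref{l:varphiz10}--\ref{l:varphiz01} after a change of coordinates. Your virtual merged element $K'$ is the same idea applied to only half of the refined square. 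A repaired version of your argument would have to merge the whole square (or, equivalently, treat $\check z$ exactly as you treat $\tilde z$ and pair $K_1$ with $K_3$); as written, nonzero contributions are simply missing, so the proof is not complete.
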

\begin{proof}
Let $\Phi$ denote the Adini basis function related to point evaluation
at $z'$ with respect to 
a coarser triangulation resulting from unifying the four smaller
rectangles in Figure~\ref{f:hanging}.
The function $\varphi$ can be written as
$$
 \varphi
  =\Phi 
    +\sum_{|\alpha|\leq 1} c_\alpha \varphi_{\hat z,\alpha}
    +\tilde c\psi_{\tilde z,(0,1)}
    +\check c\psi_{\check z,(1,0)}
$$
for suitable coefficients $c_\alpha$, $\tilde c$, $\check c$.
with $\tilde z$, $\hat z$, $\check z$ as in Figure~\ref{f:hanging}.
Lemma~\ref{l:d2orthp1}
and Lemma~\ref{l:varphiz10}--\ref{l:varphiz01}
(with changes of coordinates) can be applied and yield the assertion.
\end{proof}

\begin{lemma}\label{l:nonzeroint}
In the configuration of Figure~\ref{f:T1T4},
$\varphi=\varphi_{z,(0,1)}$ satisfies
$$
 \int_{T\cup K} D^2 \varphi = h_T \begin{bmatrix}
                               0 & 0\\0 & \gamma
                              \end{bmatrix}
\quad\text{with }\gamma\approx 1.
$$
\end{lemma}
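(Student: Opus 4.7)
The matrix identity is established entry by entry. The $(1,1)$-entry vanishes directly by Lemma~\ref{l:varphiz01}, which asserts $\partial_{xx}^2\varphi \equiv 0$ on $T \cup K$. The off-diagonal entries vanish by the same lemma applied with the constant function $q \equiv 1$, which gives $\int_{T\cup K}\partial_{xy}^2\varphi = 0$.

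It remains to evaluate the $(2,2)$-entry, and the strategy is to first derive an explicit product-form representation of $\varphi|_T$ and then integrate by separation of variables. Since the only nonzero Adini DOF of $\varphi$ is $\partial_y\varphi(z)=1$, the polynomial $\varphi|_T$ vanishes identically on three of the four edges of $T$: on the two edges not adjacent to $z$, the cubic edge restriction has four vanishing Hermite data (two endpoint values and two tangential derivatives); and on the edge through $z$ parallel to the $x$-axis, the cubic restriction has both endpoint values and $x$-derivatives equal to zero. Only on the edge through $z$ parallel to the $y$-axis is $\varphi|_T$ nontrivial. In local coordinates $(\xi,\eta) \in [-1,1]^2$ with $z$ corresponding to $(-1,-1)$, these vanishings force $\varphi|_T$ to be divisible by $(1-\xi)(1-\eta^2)$. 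Since membership in $\mathcal A = P_3 + \langle \xi^3\eta, \xi\eta^3\rangle$ forbids the monomial $\xi^2\eta^2$, the remaining cofactor must lie in $\operatorname{span}\{1,\eta\}$, leaving only two free coefficients. These are pinned down by the scaled DOF $\partial_\eta\varphi(-1,-1) = h_y^T/2$ (the factor coming from the chain rule for $\partial_y$) and the zero DOF $\partial_\eta\varphi(-1,1) = 0$, yielding
$$
\varphi|_T(\xi,\eta) = \tfrac{h_y^T}{2} \cdot \tfrac{1-\xi}{2} \cdot H(\eta),
$$
where $H$ is the cubic Hermite polynomial on $[-1,1]$ characterised by $H(\pm 1) = H'(1) = 0$ and $H'(-1) = 1$.

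From this factorisation, $\partial_{yy}^2\varphi|_T$ separates, and the integral collapses into a product of one-dimensional integrals: the $\eta$-integral equals $H'(1) - H'(-1) = -1$ by the fundamental theorem, and combined with the trivial $\xi$-integral and the Jacobian one obtains $\int_T\partial_{yy}^2\varphi = -h_x^T/2$. The identical computation on $K$, where $z$ corresponds to the reference vertex $(1,-1)$ and the linear factor in $\xi$ becomes $(1+\xi)/2$ by the symmetry $\xi \mapsto -\xi$, yields $\int_K\partial_{yy}^2\varphi = -h_x^K/2$. Summing, and using that the shared edge forces $h_y^T = h_y^K$ while bounded aspect ratio yields $h_x^T \approx h_x^K \approx h_T$, gives $\int_{T\cup K}\partial_{yy}^2\varphi = -(h_x^T + h_x^K)/2 = h_T\gamma$ with $|\gamma| \approx 1$.

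The main obstacle is the recognition of the factored form for $\varphi|_T$: identifying the three edges on which $\varphi$ is forced to vanish identically, and then invoking the exclusion of $\xi^2\eta^2$ from $\mathcal A$ to essentially force the product structure. Once this representation is available, the remaining calculation is a routine one-dimensional tensor-product exercise.
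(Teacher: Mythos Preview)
Your proof is correct and follows essentially the same approach as the paper: both identify the zero entries via Lemma~\ref{l:varphiz01}, derive the explicit factored form of $\varphi|_T$ from its vanishing on three edges, and then integrate the $(2,2)$-entry. The minor differences are presentational: you justify the factored form by explicitly invoking the exclusion of $\xi^2\eta^2$ from $\mathcal A$ to pin down the cofactor in $\operatorname{span}\{1,\eta\}$, whereas the paper defers to ``an argument analogous to that of Lemma~\ref{l:varphiz10}''; and you compute $\int_K\partial^2_{yy}\varphi$ directly on $K$, while the paper relates it to $\int_T\partial^2_{yy}\varphi$ via the scaling symmetry $\varphi|_K(x,y)=\varphi|_T(-\rho x,y)$, which yields the factor $(1+\rho^{-1})$ rather than your $(h_x^T+h_x^K)/2$---these are equivalent.
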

\begin{proof}
 Lemma~\ref{l:varphiz01} shows that
 $\int_{T\cup K}\partial^2_{jk}\varphi=0$
 if $\min\{j,k\}\leq1$.
 From the symmetry
 $\varphi|_K(x,y) = \varphi|_T(-\rho x,y)$
 and change of variables we further obtain
 $$
 \int_K \partial_{yy}^2\varphi
 =
 \rho^{-1} \int_T \partial_{yy}^2\varphi
 \qquad\text{and therefore}\quad
 \int_{T\cup K} \partial_{yy}^2\varphi
 = (1+\rho^{-1})
   \int_T \partial_{yy}^2\varphi.
 $$
Since $\varphi$ vanishes on all edges of $T$ apart from
$\{\xi=-1\}$, an argument analogous to that of the proof of
Lemma~\ref{l:varphiz10} shows that
$\varphi = c (\eta-1)^2(\eta+1)(\xi-1)$ with some $c\approx h_y$.
Then, obviously, the integral of
$\partial_{yy}^2\varphi=h_y^{-2}c(\xi-1)(6\eta-2)$
over $T$ is nonzero and scales like $h_T$.
\end{proof}

\section{Bilinear interpolation with hanging-node constraint}
\label{app:Q1int}
Given a piecewise polynomial function $w$ that is continuous
in the regular vertices of $\mathcal T$,
its globally continuous and piecewise bilinear interpolation
$Q w$ is defined in \eqref{e:Qdef}.

\begin{lemma}[stability and approximation of bilinear interpolation]
\label{l:Q1stab}
Let the partition $\mathcal T$ satisfy the mesh condition
of Definition~\ref{d:meshcondition}
and let the function $w$ be globally continuous
and piecewise polynomial with respect to $\mathcal T$.
Then
$$
  h_T^{-2}\|w-Qw\|_T + h_T^{-1} \|\nabla (w-Qw)\|_T +\| D^2 Qw\|_T
  \lesssim
  \|D^2_h w\|_{\omega_T}
  \quad  \text{for any }T\in\mathcal T,
$$
with the element patch $\omega_T$.
The constant hidden in the notation $\lesssim$ depends on the polynomial
degree of $w$.
\end{lemma}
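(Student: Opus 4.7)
The plan is to exploit the fact that $Q$ reproduces affine functions and then combine a suitable polynomial approximation on the patch $\omega_T$ with inverse estimates on the finite-dimensional space where $Qv$ lives. As a preliminary observation I would verify that $Qp = p$ for every $p\in P_1$: at regular vertices this is clear by definition, and at an irregular vertex $z$ with neighbours $z_1,z_2$ the weights $\lambda_j=|z-z_j|$ in \eqref{e:Qdef} are precisely those of the one-dimensional affine interpolant along the edge, so affine functions are reproduced pointwise at all vertices; since $Qv$ is globally bilinear this forces $Qp=p$ everywhere.

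Next I would establish the $L^2$ stability of $Q$, namely $\|Qv\|_T\lesssim \|v\|_{\omega_T}$ for piecewise polynomial $v$ of degree bounded by the degree of $w$. Because $Qv|_T$ is bilinear, it is controlled by its values at the four vertices of $T$. At regular vertices this value is $v(z)$, and at irregular vertices it is the convex combination of $v(z_1),v(z_2)$, where by the mesh condition in Definition~\ref{d:meshcondition} the neighbours $z_1,z_2$ lie in $\overline{\omega_T}$. Applying the inverse inequality $|v(z)|\lesssim h_T^{-1}\|v\|_{\omega_T}$ to an element in $\omega_T$ touching each such vertex and using that bilinear functions on $T$ are bounded in $L^2(T)$ by $h_T$ times their supremum yields the claim. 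Combining this with the classical inverse estimates for bilinears gives $\|\nabla Qv\|_T\lesssim h_T^{-1}\|v\|_{\omega_T}$ and $\|D^2 Qv\|_T\lesssim h_T^{-2}\|v\|_{\omega_T}$.

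The core ingredient will then be a Bramble--Hilbert-type lemma on the patch: there exists $\pi w\in P_1$ with
\[
\|w-\pi w\|_{\omega_T} + h_T\|\nabla_h(w-\pi w)\|_{\omega_T} \lesssim h_T^2\,\|D^2_h w\|_{\omega_T}.
\]
I would prove this by rescaling $\omega_T$ to a reference patch of unit size and invoking equivalence of seminorms on the finite-dimensional space of globally continuous, piecewise polynomial functions of degree at most that of $w$. The key point to verify there is that the kernel of the seminorm $v\mapsto \|D^2_h v\|_{\omega_T}$ coincides with $P_1$: if $D^2_h v=0$ piecewise, then $v$ is elementwise affine, and since two affine functions that agree on a segment (the shared edge) coincide, global $C^0$ together with connectedness forces $v$ to be globally affine.

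Given these three ingredients, the conclusion is immediate: using $Q\pi w=\pi w$, I split $w-Qw=(w-\pi w)-Q(w-\pi w)$ and $D^2Qw=D^2Q(w-\pi w)$, then combine the Bramble--Hilbert estimate with the $L^2$ stability and the scaling of $Q$ in the higher-order norms. The main obstacle, in my view, will be the approximation estimate on the patch: I need to handle the fact that $w$ is only $C^0$ and piecewise polynomial (so $D^2 w$ is merely defined elementwise) and to argue carefully about the kernel of the broken Hessian seminorm; the stability and affine-preservation steps are mostly bookkeeping involving scaling and the mesh condition, which restricts the combinatorics of irregular vertices enough to make the inverse-inequality bound on $|v(z)|$ legitimate.
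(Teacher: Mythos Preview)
Your affine-reproduction and $L^2$-stability arguments are fine, but the Bramble--Hilbert step contains a genuine error. You claim that the kernel of $v\mapsto\|D^2_h v\|_{\omega_T}$ on globally continuous piecewise polynomials equals $P_1$, arguing that ``two affine functions that agree on a segment coincide''. This is false in two dimensions: an affine function on $\mathbb R^2$ is determined by three non-collinear points, and a shared edge supplies only collinear ones. For instance, take two adjacent unit squares sharing the edge $\{x=1\}$ and set $w=0$ on the left square, $w=x-1$ on the right; this $w$ is continuous piecewise affine with $D^2_h w=0$ but $w\notin P_1$. Consequently the estimate $\|w-\pi w\|_{\omega_T}\lesssim h_T^2\|D^2_h w\|_{\omega_T}$ you need cannot hold for any choice of $\pi w\in P_1$, and the decomposition $w-Qw=(w-\pi w)-Q(w-\pi w)$ does not close.

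The fix is to bypass the intermediate $P_1$ approximant and run the scaling argument directly on the operator $I-Q$. The crucial observation, which replaces your kernel claim, is that $Q$ reproduces not only global affines but every \emph{continuous piecewise affine} function on $T$: if $w$ is piecewise affine and continuous, then at an irregular vertex $z$ of $T$ the affine interpolation of $w(z_1),w(z_2)$ along the edge of the coarse neighbour $K$ equals $w|_K(z)=w(z)$ by continuity, so $Qw|_T$ agrees with $w|_T$ at all four vertices of $T$ and hence equals $w|_T$. Thus $(I-Q)|_T$ vanishes on the full kernel of $\|D^2_h\cdot\|_{\omega_T}$, and the finite-dimensional scaling argument (over the finitely many reference-patch configurations allowed by the mesh condition) gives the estimate directly. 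This is precisely how the paper proceeds: it notes that $w=Qw$ at at least three non-collinear regular vertices in $\overline{\omega_T}$, that the hanging-node assignment is compatible with bilinear interpolation, and then invokes scaling.
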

\begin{proof}
 If $T$ exclusively has regular vertices, $Q$ is the standard bilinear
 interpolation on $T$ and the result is obvious.
 Assume therefore that $T$ has an irregular vertex $z$.
 Then $z$ belongs to an edge with two neighbouring regular vertices
 one of them lying outside $T$. By the mesh condition,
 $T$ must possess at least two regular vertices, so that in total there are
 at least three 
 regular vertices inside $\overline{\omega}_T$
  that are not collinear.
 At these points $w=Qw$ holds.
 Since the value at irregular vertices is compatible
 with bilinear interpolation,
 $Q$ locally reproduces affine functions.
 The asserted result thus follows
 from a standard scaling argument and the
 finite number of possible local mesh configurations.
\end{proof}

\section{BFS averaging}\label{app:bfsaveraging}
It is well known from the Bogner--Fox--Schmid (BFS) finite element
\cite{Ciarlet1978}
that, on any rectangle, the 16 linear functionals
$$
  v \mapsto \partial^\alpha v(z)
  \quad\text{for any vertex }z\text{ of }T\text{ and any }
  \alpha\in\mathfrak B
$$
for the set
$\mathfrak B:=\{(0,0),(1,0),(0,1),(1,1)\}$ of multiindices
are linear independent over $Q_3$ (bicubic functions).
The corresponding dual BFS basis of $Q_3$ consists of the 16
functions $\psi_{z,\alpha}$ with
$$
  \partial^\beta\psi_{z,\alpha}(\tilde z)
  = \delta_{z,\tilde z}\delta_{\alpha,\beta}
  \quad \text{for all }
 z,\tilde z \in\mathcal V(T)
 \text{ and }
 \alpha,\beta\in\mathfrak B.
$$
As a basic averaging operator, introduce $\mathcal M$
and $\mathcal M_0$ mapping
a piecewise smooth function
$v$ to a piecewise $Q_3$ function
by assigning the mean of the above local functionals
at \emph{all vertices} (resp.\ all interior vertices).
More precisely,
for every rectangle $T\in\mathcal T$ and any vertex
$z\in\mathcal V(T)$ of $T$,
they are defined by
 $$
  (\partial^\alpha \mathcal M v|_T)(z)
  :=
  \overline{\sum_{\substack{K\in\mathcal T: \\ z\in K}}}
            (\partial^\alpha v|_K)(z)
  \qquad\text{and}\qquad
  (\partial^\alpha \mathcal M_0 v|_T)(z)
  :=
  \begin{cases}
   \mathcal M(z) &\text{if }z\in\Omega \\
   0             &\text{if }z\in\partial\Omega
  \end{cases}
 $$
 (the $\Sigma$ with the bar represents the average).
 This function is $C^1$ continuous in all regular vertices,
 but it may be discontinuous at irregular vertices.
 From $\mathcal M v$ (resp.\ $\mathcal M_0 v$)
 we construct a globally $C^1$
 and piecewise bicubic (thus BFS) function $\mathcal Jv$
 (resp.\ $\mathcal J_0v$)
 by assigning the values of $\mathcal Mv$
 (resp.\ $\mathcal M_0v$) at regular vertices
 and by matching the values at irregular vertices
 by interpolation,
 more precisely
 \begin{align*}
  (\partial^\alpha\mathcal Jv)(z) =
  \begin{cases}
  (\partial^\alpha\mathcal Mv)(z)
  &\text{if }z\in\mathcal V^{\mathrm{reg}}\\
  (\partial^\alpha\mathcal Mv|_T)(z)
  &\text{if }z\in\mathcal V^{\mathrm{irr}}
            \text{ and } z\in T\setminus\mathcal V(T).
  \end{cases}
 \end{align*}
 The definition of $\mathcal J_0$ is analogous with $\mathcal M$
 replaced by $\mathcal M_0$ in the above formula.
In the general case that $v$ is piecewise $H^2$-regular,
we overload notation and extend $\mathcal J$ by defining
$\mathcal Jv:=\mathcal J\Pi_{Q_3}v$
for the $L^2$ projection $\Pi_{Q_3}$ to piecewise bicubic functions.
As in prior sections we denote by $[\cdot]_E$ the jump across an edge $E$.

\begin{lemma}\label{l:bfsaveraging}
 Let $\mathcal T$ be a partition
 satisfying the mesh condition of Definition~\ref{d:meshcondition}.
 Any piecewise $Q_3$ (bicubic) function $v\in L^2(\Omega)$
 satisfies
 $$
  \| v-\mathcal J_0 v\|^2
  \lesssim
  \sum_{E\in\mathcal E}
  \left(
    h_E \| [v]_E \|_E^2
    +
    h_E^{3} \| [\nabla v]_E \cdot n_E\|_E^2
  \right).
 $$
If $v\in H^2(\Omega)$, we have
$$
 \|h^{-2} (v-\mathcal J v)\|
 +
 \|h^{-1} \nabla (v-\mathcal J v)\|
 +
 \|D^2 \mathcal Jv\|
 \lesssim
 \|D^2 v\|.
$$
\end{lemma}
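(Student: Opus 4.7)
The plan is to prove the two inequalities separately, using a local BFS basis expansion for the first bound and a triangle-inequality trick for the second.

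For the first inequality, I would expand $(v-\mathcal{J}_0 v)|_T$ on each rectangle $T$ in the dual BFS basis $\{\psi_{z,\alpha}\}_{z\in\mathcal{V}(T),\,\alpha\in\mathfrak{B}}$ of $Q_3$. Since $v|_T$ and $\mathcal{J}_0v|_T$ both belong to $Q_3$, they are determined by the coefficients $F_\alpha(z,T) := \partial^\alpha v|_T(z) - \partial^\alpha\mathcal{J}_0 v|_T(z)$. A scaling argument on the reference square gives
$$\|v-\mathcal{J}_0 v\|_T^2 \lesssim \sum_{z\in\mathcal{V}(T)}\sum_{\alpha\in\mathfrak{B}} h_T^{4-2|\alpha|}|F_\alpha(z,T)|^2,$$
so it suffices to bound each coefficient by local edge-jump quantities.

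For a regular interior vertex $z$, the definition of $\mathcal{J}_0$ as the mean of local functionals gives
$F_\alpha(z,T)$ as an average of pairwise differences $\partial^\alpha v|_T(z) - \partial^\alpha v|_K(z)$ over the (boundedly many) elements $K\ni z$. Each such difference telescopes along a short path of edges through $z$, and for $|\alpha|\leq 1$ it is bounded by $h_E^{-1/2}\|[\partial^\alpha v]_E\|_E$ via a trace/inverse estimate on the polynomial jump. For the mixed derivative $\alpha=(1,1)$, rewrite $\partial_{xy} = \partial_t\partial_n$ along an edge aligned with the axes, so that $|[\partial_{xy}v]_E(z)|$ is controlled by the tangential derivative of $[\nabla v]_E\cdot n_E$ along $E$ and, through a second invocation, by a tangential second derivative of $[v]_E$; inverse estimates on the polynomial jumps produce the matching $h_E^3\|[\nabla v]_E\cdot n_E\|_E^2$ and $h_E\|[v]_E\|_E^2$ weights stated in the lemma. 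Boundary vertices in $\mathcal{J}_0$ are handled identically by treating the exterior value as zero, which is the trace of the identically-zero extension. For an irregular vertex $\tilde z\in E=\operatorname{conv}\{z_1,z_2\}$ with $z_1,z_2\in\mathcal V^{\mathrm{reg}}$ (guaranteed by Definition~\ref{d:meshcondition}), the bilinear-interpolation definition of $\mathcal J_0$ at $\tilde z$ lets me add and subtract $\partial^\alpha v|_T(\tilde z)$ and the analogous quantities at $z_1,z_2$, reducing $F_\alpha(\tilde z,\cdot)$ to a sum of regular-vertex discrepancies treated above and a term measuring the deviation of $\partial^\alpha v|_T$ from its linear interpolant between $z_1$ and $z_2$, which by a polynomial inverse estimate along $E$ is again bounded by the edge jumps across $E$. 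Squaring and summing, with the bounded overlap of patches, gives the first inequality.

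For the second inequality I write $v - \mathcal{J}v = (v - \Pi_{Q_3}v) + (\Pi_{Q_3}v - \mathcal{J}\Pi_{Q_3}v)$. The first summand is controlled by $h^2\|D^2 v\|$ in $L^2$ and $h\|D^2 v\|$ in $H^1$ via standard $L^2$-projection error estimates on rectangles. The second summand is piecewise $Q_3$, so its $L^2$-norm is bounded by the first part of the lemma applied with $\mathcal{J}$ in place of $\mathcal{J}_0$ (the distinction only removes the boundary contributions and does not alter the argument). Since the true function $v\in H^2(\Omega)$ has vanishing jumps across edges, the jumps $[\Pi_{Q_3}v]_E$ and $[\nabla\Pi_{Q_3}v]_E\cdot n_E$ coincide with jumps of $(\Pi_{Q_3}-1)v$ and are bounded by $h_E^{1/2}\|D^2 v\|_{\omega_E}$ and $h_E^{-1/2}\|D^2 v\|_{\omega_E}$, respectively, via trace inequalities for $H^2$ functions combined with projection error estimates. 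Plugging these into the first inequality yields the stated bound on $\|h^{-2}(v-\mathcal{J}v)\|$. The $\|h^{-1}\nabla(v-\mathcal{J}v)\|$ bound follows analogously after an inverse estimate on the piecewise $Q_3$ piece, and $\|D^2\mathcal{J}v\|\lesssim\|D^2 v\|$ then follows by a final triangle inequality together with an inverse estimate applied to the piecewise $Q_3$ correction $\Pi_{Q_3}v - \mathcal{J}\Pi_{Q_3}v$.

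The main obstacle is the irregular-vertex reduction: one must verify that the bilinear interpolation value and its BFS-type derivatives at a hanging node, expressed as a linear combination of regular-vertex data, can be controlled by jumps across the long edge $E$ with the correct powers of $h_E$, and in particular that the $h_E^3$ weight in front of $\|[\nabla v]_E\cdot n_E\|_E^2$ is not spoilt by any inverse-estimate factor introduced when translating mixed-derivative information across the hanging node. Once that accounting is done carefully for the finitely many admissible local configurations permitted by Definition~\ref{d:meshcondition}, the rest is scaling and a standard summation argument.
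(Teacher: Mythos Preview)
Your approach coincides with the paper's: expand $(v-\mathcal J_0 v)|_T$ in the local BFS basis, bound each nodal coefficient by jump quantities via trace/inverse estimates (splitting $|\alpha|\leq 1$ into tangential and normal parts and treating $\alpha=(1,1)$ exactly as you describe), reduce the irregular-vertex case to regular-vertex discrepancies by a scaling argument over the finitely many admissible configurations, and sum with bounded overlap; for the second inequality, split off $(1-\Pi_{Q_3})v$, apply the first part with $\mathcal J$ in place of $\mathcal J_0$, and control the jumps of $\Pi_{Q_3}v$ by trace and projection estimates.

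One correction is needed at the hanging node. The value $(\partial^\alpha\mathcal J_0 v)(\tilde z)$ is \emph{not} given by bilinear (affine) interpolation of the regular-vertex data $\partial^\alpha\mathcal M_0 v(z_j)$; by definition it is the value at $\tilde z$ of the full bicubic $\mathcal M_0 v|_K$ on the large neighbouring element $K$. Your proposed split into ``regular-vertex discrepancies plus deviation of $\partial^\alpha v|_T$ from its affine interpolant along $E$'' therefore does not match the construction, and the second of those pieces is an internal polynomial-interpolation error on a single element, not a jump quantity---an inverse estimate along $E$ does not produce $\|[v]_E\|_E$ or $\|[\nabla v]_E\cdot n_E\|_E$ from it. The paper instead bounds $|\partial^\alpha v|_T(\tilde z)-\partial^\alpha\mathcal M_0 v|_K(\tilde z)|$ directly by $\sum_{j,\beta} h_T^{|\beta|-|\alpha|}|\partial^\beta(v-\mathcal M_0 v)|_{T_j}(z_j)|$ via a scaling argument on the reference configuration, and then feeds these regular-vertex discrepancies into the jump bounds already established. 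Your final remark about ``accounting over the finitely many admissible local configurations'' is exactly this fix; only the specific intermediate decomposition you wrote would not go through.
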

\begin{proof}
Let $v$ be piecewise bicubic.
Let $T\in\mathcal T$ and $z\in\mathcal V(T)$ be a vertex of $T$.
 Standard techniques
 \cite{BrennerScott2008,CarstensenGallistlHu2013}
 reveal for the basic averaging operator $\mathcal M_0$ that
 $$
 \sum_{\alpha\in \mathfrak B}
    h_T^{1+|\alpha|} |\partial^\alpha (v-\mathcal M_0 v)(z)|
    \lesssim
    \left(
    \sum_{E\in\mathcal E: z\in E}
    (h_T
    \| [v]_E\|_E^2
    +
    h_T^{3}
    \|[\nabla v]_E\cdot n_E \|_E^2
    )
    \right)^{1/2}.
 $$
 If $z$ is a regular vertex, the same estimate obviously holds
 for $\mathcal M_0$ replaced by $\mathcal J_0$.
 If $z$ is an irregular vertex and $K$ is the element with
 $z\in K\setminus \mathcal V(K)$,
 then $(\partial^\alpha \mathcal J_0v)(z)$ is defined by interpolation
 from information of $\mathcal M_0v$ in the neighbouring vertices
 $z_1,z_2$; and a scaling argument shows that
 $$
   \partial^\alpha  (v-\mathcal J_0 v)|_T(z)
   =
   (\partial^\alpha v|_T-\partial^\alpha\mathcal M_0 v|_K)(z)
   \lesssim
   \sum_{j=1}^2
   \sum_{\beta\in \mathfrak B} h_T^{|\beta|-|\alpha|}
                 \left|\partial^\beta (v-\mathcal M_0 v)|_{T_j}(z_j)\right|
  ,
 $$
 where $T_1$, $T_2$ are the two rectangles
 with $\{z,z_j\}\subseteq \mathcal V(T_j)$ (one of them being $T$).
 Since the expansion of $v-\mathcal J_0v$ on $T$ in terms of the BFS basis
 functions and the scaling of the latter read
 $$
   (v-\mathcal J_0v)|_T
   =
   \sum_{\alpha\in\mathfrak B}
   \sum_{z\in\mathcal V(T)}
    \partial^\alpha (v-\mathcal J_0v)(z) \psi_{\alpha,z}
   \qquad\text{and}\qquad
   \|\psi_{\alpha,z}\|_T \lesssim h_T^{1+|\alpha|},
 $$
 a direct computation with the triangle inequality and the above estimates
 at the vertices
 and the local equivalence $h_T\approx h_E$ reveal that
 \begin{equation*}
  \|v-\mathcal J_0 v\|_T
  \lesssim
  \left(
  \sum_{z\in\mathcal V(T)}
  \sum_{E\in\mathcal E: z\in E}
    (h_E \| [v]_E\|_E^2
    +
    h_E^{3} \|[\nabla v]_E\cdot n_E \|_E^2
    )
    \right)^{1/2}.
 \end{equation*}
 This and the finite overlap of the element patches proves the first stated
 estimate for $\mathcal J_0$.
 An analogous argument shows that the same upper bound is valid for
 $\| v-\mathcal J v\|^2$.
 The second stated estimate follows from combining this bound with
 local trace inequalities and standard estimates for the piecewise
 $L^2$ projection.
\end{proof}

We remark that
in the upper bound of $\| v-\mathcal J v\|^2$ mentioned in the 
proof of the foregoing lemma,
the boundary edges can be dropped,
which is, however, not made use of in this work.

\section{Adini quasi-interpolation}
\label{app:qiAdini}

We denote by $I_h$ the standard
Adini interpolation with zero boundary data
acting an a sufficiently smooth function 
$w\in C^1(\overline\Omega)$ as
$$
 I_h w|_T =
     \sum_{z\in\mathcal V(T)\cap\Omega}
     \sum_{|\alpha|\leq 1}
      \partial^\alpha w(z)\varphi_{z,\alpha}
      \quad\text{for any }T\in\mathcal T
$$
with $\varphi_{z,\alpha}$ defined in \S\ref{app:adinibasis}.
Let $v\in H^2_0(\Omega)$.
We define the Adini quasi-interpolation
$I_h \mathcal J v \in V_h$,
where $\mathcal J$
is the BFS averaging from \S\ref{app:bfsaveraging}.
With Lemma~\ref{l:bfsaveraging} and standard discrete estimates we obtain
\begin{equation}\label{e:adiniQI}
 \|h^{-2} (v- I_h \mathcal J v)\|
 +
 \|h^{-1} \nabla (v- I_h \mathcal J v)\|
 +
 \|D^2_h I_h \mathcal J v\|
 \lesssim
 \|D^2 v\|.
\end{equation}

\bibliographystyle{abbrv}
\bibliography{plate}

\begin{thebibliography}{10}

\bibitem{AdiniClough1961}
A.~Adini and R.~W. Clough.
\newblock Analysis of plate bending by the finite element method.
\newblock {\em NSF report}, G-7337, 1961.

\bibitem{BlumRannacher1980}
H.~Blum and R.~Rannacher.
\newblock On the boundary value problem of the biharmonic operator on domains
  with angular corners.
\newblock {\em Math. Methods Appl. Sci.}, 2(4):556--581, 1980.

\bibitem{BrennerScott2008}
S.~C. Brenner and L.~R. Scott.
\newblock {\em The Mathematical Theory of Finite Element Methods}, volume~15 of
  {\em Texts in Applied Mathematics}.
\newblock Springer, New York, third edition, 2008.

\bibitem{CarstensenGallistlHu2013}
C.~Carstensen, D.~Gallistl, and J.~Hu.
\newblock A posteriori error estimates for nonconforming finite element methods
  for fourth-order problems on rectangles.
\newblock {\em Numer. Math.}, 124(2):309--335, 2013.

\bibitem{CarstensenHu2009}
C.~Carstensen and J.~Hu.
\newblock Hanging nodes in the unifying theory of a posteriori finite element
  error control.
\newblock {\em J. Comput. Math.}, 27(2-3):215--236, 2009.

\bibitem{Ciarlet1978}
P.~G. Ciarlet.
\newblock {\em The Finite Element Method for Elliptic Problems}, volume~4 of
  {\em Studies in Mathematics and its Applications}.
\newblock North-Holland, Amsterdam, 1978.

\bibitem{Gallistl2015IMA}
D.~Gallistl.
\newblock {M}orley finite element method for the eigenvalues of the biharmonic
  operator.
\newblock {\em IMA J. Numer. Anal.}, 35(4):1779--1811, 2015.

\bibitem{GallistlTian2024}
D.~Gallistl and S.~Tian.
\newblock A posteriori error estimates for nonconforming discretizations of
  singularly perturbed biharmonic operators.
\newblock {\em SMAI J. Comput. Math.}, 10:355--372, 2024.

\bibitem{Grisvard1992}
P.~Grisvard.
\newblock {\em Singularities in Boundary Value Problems}, volume~22 of {\em
  Recherches en Math\'ematiques Appliqu\'ees}.
\newblock Masson, Paris, 1992.

\bibitem{Gudi2010}
T.~Gudi.
\newblock A new error analysis for discontinuous finite element methods for
  linear elliptic problems.
\newblock {\em Math. Comp.}, 79(272):2169--2189, 2010.

\bibitem{HuYangZhang2016}
J.~Hu, X.~Yang, and S.~Zhang.
\newblock Capacity of the {A}dini element for biharmonic equations.
\newblock {\em J. Sci. Comput.}, 69(3):1366--1383, 2016.

\bibitem{LascauxLesaint1975}
P.~Lascaux and P.~Lesaint.
\newblock Some nonconforming finite elements for the plate bending problem.
\newblock {\em Rev. Fran\c caise Automat. Informat. Recherche Operationnelle},
  9(R-1):9--53, 1975.

\bibitem{Onate2009}
E.~O\~nate.
\newblock {\em Structural analysis with the finite element method---linear
  statics. {V}olume 1. {B}asis and solids}.
\newblock Lecture Notes on Numerical Methods in Engineering and Sciences.
  International Center for Numerical Methods in Engineering (CIMNE), Barcelona;
  Springer-Verlag, Berlin, 2009.

\bibitem{Verfuerth2013}
R.~Verf{\"u}rth.
\newblock {\em A posteriori error estimation techniques for finite element
  methods}.
\newblock Numerical Mathematics and Scientific Computation. Oxford University
  Press, Oxford, 2013.

\end{thebibliography}

\end{document}